    \newcommand{\BA}{{\mathbb {A}}} \newcommand{\BB}{{\mathbb {B}}}
    \newcommand{\BC}{{\mathbb {C}}} 
     \newcommand{\BF}{{\mathbb {F}}}
     \newcommand{\BP}{{\mathbb {P}}}
    \newcommand{\BQ}{{\mathbb {Q}}} \newcommand{\BR}{{\mathbb {R}}}
     \newcommand{\BZ}{{\mathbb {Z}}}
     \newcommand{\CH}{{\mathcal {H}}}
    \newcommand{\CO}{{\mathcal {O}}} 
     \newcommand{\CR}{{\mathcal {R}}}
    \newcommand{\CS}{{\mathcal {S}}}
     \newcommand{\fl}{{\mathfrak{l}}}
     \newcommand{\fP}{{\mathfrak{P}}}
    \newcommand{\ab}{{\mathrm{ab}}}
    \newcommand{\Aut}{{\mathrm{Aut}}}
    \newcommand{\End}{{\mathrm{End}}}
    \newcommand{\Frob}{{\mathrm{Frob}}}
    \newcommand{\Gal}{{\mathrm{Gal}}} \newcommand{\GL}{{\mathrm{GL}}}
    \newcommand{\Hom}{{\mathrm{Hom}}}
    \newcommand{\ord}{{\mathrm{ord}}} \newcommand{\rk}{{\mathrm{rank}}}
     \newcommand{\Pic}{\mathrm{Pic}}
    \newcommand{\pr}{{\mathrm{pr}}} 
    \renewcommand{\mod}{\ \mathrm{mod}\ }
    \newcommand{\Sel}{{\mathrm{Sel}}}
    \newcommand{\SL}{{\mathrm{SL}}}
    \newcommand{\tor}{{\mathrm{tor}}}
    \newcommand{\Vol}{{\mathrm{Vol}}}
     \newcommand{\M}{\mathrm{M}}
        \newcommand{\Tr}{\mathrm{Tr}}
      \newcommand{\sK}{\mathscr{K}}
\newcommand{\matrixx}[4]{\begin{pmatrix}
#1 & #2 \\ #3 & #4
\end{pmatrix} }        
    \font\cyr=wncyr10
    \newcommand{\Sha}{\hbox{\cyr X}}\newcommand{\wt}{\widetilde}
    \newcommand{\wh}{\widehat}
    \newcommand{\ov}{\overline}
    \newcommand{\lra}{\longrightarrow}
    \newcommand{\ra}{\rightarrow}
    \newcommand{\N}{\mathrm{N}}
                              \newcommand{\hilbert}[4]{\left(\frac{#1,#2}{#3;#4}\right)}
    \theoremstyle{plain}
    \newtheorem{thm}{Theorem}[section] \newtheorem{coro}[thm]{Corollary}
    \newtheorem{lem}[thm]{Lemma}  \newtheorem{prop}[thm]{Proposition}
\theoremstyle{remark} 
\theoremstyle{remark} 
\theoremstyle{remark} 
    \numberwithin{equation}{section}
\begin{document}
\title{Cube sums of form $3p$ and $3p^2$}
\author{Jie Shu, Xu Song and Hongbo Yin}
\begin{abstract}
Let  $p\equiv 2,5\mod 9$ be an odd prime. In this paper, we prove that at least one of $3p$ and $3p^2$ is a cube sum by constructing certain nontrivial Heegner points. We also establish the explicit Gross-Zagier formulae for these Heegner points  and give variants of the Birch and Swinnerton-Dyer conjecture of the related elliptic curves.  
\end{abstract}
\address{School of Mathematical Sciences, Tongji University, Shanghai 200092}
\email{shujie@tongji.edu.cn}
\address{Department of Mathematics, National University of Singapore, Singapore 119076}
\email{xusong@u.nus.edu}
\address{Academy of Mathematics and Systems Science, Morningside center of Mathematics, Chinese Academy of Sciences, Beijing 100190}
\email{yinhb@math.ac.cn}
\maketitle
\tableofcontents

\section{Introduction}
A nonzero rational number is called a cube sum if it is of the form $a^3+b^3$ with $a,b\in \BQ^\times$. 
For any $n\in \BQ^\times$, let $E_n$ be the elliptic curve over $\BQ$ defined by the projective equation $$x^3+y^3=nz^3.$$ If $n=a^3$ is a cube of a rational number $a\in \BQ^\times$, then $E_n(\BQ)\simeq \BZ/3\BZ$ is generated by the point with projective coordinates $(a:0:1)$, and then $n$ is not a cube sum. If $n=2a^3$ is twice a cube of a rational number, then $E_n(\BQ)\simeq \BZ/2\BZ$ has a generator $(a:a:1)$, and $n$ is a cube sum. Otherwise, the torsion subgroup $E_n(\BQ)_\tor$ is trivial. Therefore, if  $n$ is not twice a cube of rationals , then $n$ is a cube sum if and only if $\rk_\BZ E_n(\BQ)>0$. Determining which rational numbers are cube sums is a classical Diophantine problem, which may also be referred as the Sylvester problem for historical reasons \cite{Sylv79a,Sylv79b,Sylv80a,Sylv80b}. 

Various investigations of the Sylvester problems can be found in the literature. In \cite{Satge} (see also \cite{DV1,CST17}), P. Satg\'e proved that if $p\equiv 2$ resp. $5 \mod 9$, then $2p$ resp. $2p^2$ is a cube sum. D. Lieman \cite{Lieman} showed there are infinitely many cube-free integers, which are not cube sums. In \cite{CST17}, Cai-Shu-Tian proved there are infinitely many integers, with given even number of distinct prime factors,  which are cube sums (resp. not cube sums).
The famous Sylvester conjecture states that any prime number congruent to $4,7,8\mod 9$ is a cube sum.  A partial result can be found in the recent work of Dasgupta and Voight \cite{DV17}. 

In this paper, we are concerned with cube sums of the form $3p^i$ where $p$ is a prime and $i=1,2$, and the main result is as follows.
\begin{thm}\label{main}
Let  $p\equiv 2,5\mod 9$ be an odd prime. Then at least one of $3p$ and $3p^2$ is a cube sum.
\end{thm}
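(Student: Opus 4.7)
The plan is to show positive Mordell-Weil rank for at least one of $E_{3p}$ and $E_{3p^2}$ by producing a nontrivial Heegner point whose N\'eron-Tate height is computed via an explicit Gross-Zagier formula. These two curves are cubic twists of $E_1 : x^3 + y^3 = z^3$ (conductor $27$, CM by $\BZ[\zeta_3]$), so each is modular and its $L$-function arises as a Hecke $L$-function of an algebraic character of $K := \BQ(\sqrt{-3})$. The first step is to verify that under the hypothesis $p \equiv 2, 5 \pmod 9$ the global root numbers $\varepsilon(E_{3p}/\BQ)$ and $\varepsilon(E_{3p^2}/\BQ)$ are both equal to $-1$; this amounts to a local root-number computation at $3$ and $p$ using the known local components of the CM Hecke character, and it ensures that each central $L$-value vanishes to odd order.

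Next, I would select an auxiliary imaginary quadratic field $F \neq K$ so that (i) the Heegner hypothesis is satisfied for $E_{3p^i}$ ($i = 1, 2$) with respect to an appropriate parametrizing curve, and (ii) the decomposition $L(E_{3p^i}/F, s) = L(E_{3p^i}, s)\cdot L(E_{3p^i}^F, s)$ still has odd order of vanishing at $s = 1$, equivalently that the quadratic twist value $L(E_{3p^i}^F, 1)$ is nonzero. Choices such as $F = \BQ(\sqrt{-p})$ or $\BQ(\sqrt{-3p})$ should work depending on the splitting of $3$ and $p$, and when some bad prime is inert in $F$ one replaces the modular curve by the Shimura curve attached to the quaternion algebra ramified at those primes. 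On this Shimura (or modular) curve $X$, the CM divisor attached to the maximal order $\cO_F$ yields a Heegner divisor $y_F \in J(X)(F)$; its image under the modular parametrization of $E_{3p^i}$ (via Jacquet-Langlands when needed) gives a point $P_i \in E_{3p^i}(F)$, which after Galois averaging under $\Gal(F/\BQ)$ descends, modulo torsion, to a rational point.

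Third, apply the explicit Gross-Zagier formula that the paper establishes to obtain
\[
\langle P_i,\, P_i \rangle_{\mathrm{NT}} \;=\; c_i \cdot L'(E_{3p^i}/F,\, 1),
\]
for an explicit constant $c_i \neq 0$ built from a CM period and local toric integrals. The crux, and the main obstacle, is to show that this height is nonzero for at least one of $i = 1, 2$, equivalently that $P_i$ is not torsion. I expect this to be handled by an explicit evaluation of the Waldspurger-type local toric periods at the primes $3$ and $p$, with the residue condition $p \equiv 2, 5 \pmod 9$ forcing the local factor at $3$ to be a unit (most likely after reducing to a clean congruence modulo $3$). It is here that the argument must also explain why one gets a nontrivial contribution in at least one of the two twists, rather than having the two contributions mysteriously cancel; the natural mechanism is to exploit the Galois action of $\Gal(K/\BQ)$ permuting $E_{3p}$ and $E_{3p^2}$ and to argue that a single CM Heegner cycle on the combined abelian variety $E_{3p}\times E_{3p^2}$ over $K$ cannot have both projections trivial under the parity/root-number assumptions already established. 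Once nontriviality of $P_i$ is secured for some $i$, the curve $E_{3p^i}(\BQ)$ has positive rank, and since $3p^i$ is not twice a cube, the discussion at the start of the introduction yields that $3p^i$ is a cube sum.
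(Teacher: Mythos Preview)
Your proposal has the right high-level shape---Heegner points and cubic twists---but it diverges from the paper in setup and, more importantly, has a real gap at the crux.

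\textbf{Setup.} You propose an auxiliary imaginary quadratic field $F\neq K$ and a Shimura-curve Heegner point for each of $E_{3p}$ and $E_{3p^2}$ separately. The paper does not do this. It works directly over $K=\BQ(\sqrt{-3})$ with the cubic characters $\chi_{3p},\chi_{3p^2}$ of $\Gal(\ov K/K)$, and constructs a \emph{single} Heegner point $z=\Tr_{H_{9p}/L_{(3,p)}}f(P_\tau)\in E_1(L_{(3,p)})$, where $L_{(3,p)}=K(\sqrt[3]{3},\sqrt[3]{p})$. The decomposition $z=z_1+z_2$ into $\chi_{3p}$- and $\chi_{3p^2}$-eigencomponents (Lemma~\ref{lem:2}, Proposition~\ref{decomposition}) is what links the two twists and explains why nontriviality of $z$ forces one of $E_{3p}(\BQ)$, $E_{3p^2}(\BQ)$ to have positive rank. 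Your vague ``single CM Heegner cycle on $E_{3p}\times E_{3p^2}$'' is exactly this, but the paper makes it concrete.

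\textbf{The gap.} You write that nontriviality ``is handled by an explicit evaluation of the Waldspurger-type local toric periods at the primes $3$ and $p$.'' This conflates two different things. Nonvanishing of the local toric integrals $\beta_v^0(f_v,f_v)$ only says that $f$ is a test vector, i.e.\ that the Gross-Zagier formula is not the identity $0=0$. It says nothing about whether $L'(1,E_1,\chi)$, or equivalently the Heegner point, is nonzero. The paper computes $\beta_3^0(f_3,f_3)$ (Proposition~\ref{local}) precisely to make the Gross-Zagier formula explicit (Theorem~\ref{thm:GZ}), but that formula is \emph{not} used to prove Theorem~\ref{main}.

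The actual nontriviality argument is arithmetic and mod-$p$: the paper proves a refined Kronecker congruence for modular functions at supersingular CM points (Proposition~\ref{prop: Kronecker}), applies it to explicit $\eta$-quotients to compute the coordinates of $P_\tau$ modulo $p$ (Proposition~\ref{cong} and its Corollary), and then shows that the reduction of $z\bmod\fP$ is a primitive $3$-torsion point in $E_1(\BF_{p^2})$ not killed by $[\sqrt{-3}]$ (Theorem~\ref{thm:1}). The key algebraic input is Proposition~\ref{finite}: the generator $(36,108)$ of $E_9(\BQ)$ is not divisible by $3$ in $E_9(K_p)$, because any $3$-division point would force $\BQ(\zeta_9)$ into a field where $p$ splits, contradicting $p\equiv 2,5\pmod 9$. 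This is where the congruence hypothesis on $p$ is actually used, and nothing in your proposal approaches it.
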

Let $p\equiv 2,5\mod 9$ be an odd prime number. By the descent method (Proposition \ref{selmer}) and  the formula for epsilon factors in \cite{Liverance}, we know
\[\Sel_3(E_{3p})\simeq \Sel_3(E_{3p^2})\simeq \BZ/3\BZ,\]
and
\[\epsilon(E_{3p})=\epsilon(E_{3p^2})=-1.\]
Then the Birch and Swinnerton-Dyer (B-SD) conjectures for the elliptic curves $E_{3p}$ and $E_{3p^2}$ imply that 
\[\rk E_{3p}(\BQ)=\rk E_{3p^2}(\BQ)=1.\]
Therefore, the B-SD conjecture implies that both $3p$ and $3p^2$ should be cube sums.

Let $K=\BQ(\sqrt{-3})\subset \BC$ be an imaginary quadratic field with $\CO_K=\BZ[\omega]$ its ring of integers, $\omega=\frac{-1+\sqrt{-3}}{2}$. For any $n\in \BQ^\times$, the elliptic curve $E_n$ has Weierstrass equation $y^2=x^3-2^4\cdot 3^3\cdot n^2$, and has complex multiplication by $\CO_K$ over $K$. We fix the complex multiplication $[\ ]:\CO_K\simeq \End_{\ov{\BQ}}(E_n)$ by $[\omega](x,y)=(\omega x,y)$. 

Let $p\equiv 2,5\mod 9$ be an odd prime.  Let $\chi_1:G_K\ra \CO_K^\times$ resp. $\chi_2$ be the character given by $\chi_1(\sigma)=(\sqrt[3]{3p})^{\sigma-1}$ resp. $\chi_2(\sigma)=(\sqrt[3]{3p^2})^{\sigma-1}$. 
We normally embed $K$ into $\M_2(\BQ)$ with a fixed point $\tau=p\omega /9\in \CH$ under linear fractional transformations, where $\CH$ is the Poinc\'are upper half plane.  The elliptic curve $E_1$ is naturally isomorphic to the modular curve $X_0(27)$, and let $f:X_0(27)\ra E_1$ be the isomorphic modular parametrization.  Let $P_\tau\in X_0(27)$ be the point arising from the imaginary quadratic $\tau\in \CH$. The CM point $P_\tau$ is defined over the ring class field $H_{9p}$.  Define the Heegner points 
\[ R_1=\Tr_{H_{9p}/L_{(3p)}} f(P_\tau)\in E_1(L_{(3p)}),\]
 and
 \[R_2=\Tr_{H_{9p}/L_{(3p^2)}} f(P_\tau)\in E_1(L_{(3p^2)}).\]
By Proposition \ref{decomposition}, these Heegner points give rise to rational points in $E_{3p}(K)$ and $E_{3p^2}(K)$ respectively, and hence  if the  Heegner point $R_1$ resp. $R_2$ is of infinite order, then $3p$ resp. $3p^2$ is a cube sum.

The base change L-function $L(s,E_1,\chi_1)$ resp. $L(s,E_1,\chi_2)$ has a decomposition
\[L(s,E_1,\chi_1)=L(s,E_{3p})\cdot L(s,E_{9p^2}) \textrm{ resp. }L(s,E_1,\chi_2)=L(s,E_{3p^2})\cdot L(s,E_{9p}).\]
By the formulae for epsilon factors in \cite{Liverance}, the L-function $L(s,E_1,\chi_1)$ resp. $L(s,E_1,\chi_2)$ has sign $-1$ in the functional equation.
The morphism $f$  is a test vector for  the pair $(E_1,\chi_1)$ resp. $(E_1,\chi_2)$ in the sense of Proposition \ref{test-vector}, and we have the following explicit height formula (see Theorem \ref{thm:GZ}). 
\begin{thm} \label{thm:GZ}
Let  $p\equiv 2,5 \mod 9$ be an odd prime number.  We  have the following explicit height formula of Heegner points:
\[\frac{L'(1,E_{3p})L(1,E_{9p^2})}{\Omega_{3p}\Omega_{9p^2}}=\wh{h}_\BQ(R_1),\]
and
\[\frac{L'(1,E_{3p^2})L(1,E_{9p})}{\Omega_{3p^2}\Omega_{9p}}=  \wh{h}_\BQ(R_2).\]
where $\Omega_n$ denotes the minimal real period of the elliptic curve $E_n$ and $\wh{h}_\BQ$ denotes the N\'eron-Tate height on $E_1$ over $\BQ$.

\end{thm}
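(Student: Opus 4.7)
The plan is to invoke a general explicit Gross--Zagier formula for the CM elliptic curve $E_1$ twisted by the ring class character $\chi_1$ (resp.\ $\chi_2$), use the L-function factorization $L(s,E_1,\chi_1)=L(s,E_{3p})L(s,E_{9p^2})$, and identify the trace Heegner point $R_1$ with the $\chi_1$-isotypic projection of $f(P_\tau)$. The $\chi_2$-case will be entirely parallel, so I focus on $R_1$.

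First, I would invoke an explicit Gross--Zagier formula specialized to the anticyclotomic CM setting, in the spirit of \cite{CST17}, applied to the pair $(E_1,\chi_1)$. Since $\chi_1$ is a cubic ring class character of $K$ of conductor dividing $9p$ and $f\colon X_0(27)\to E_1$ is a local test vector at every place by Proposition \ref{test-vector}, all finite local factors reduce to their optimal values, while the archimedean factor evaluates explicitly because $E_1$ has CM by $\cO_K$. The formula then produces an identity of the shape
\[
\frac{L'(1,E_1,\chi_1)}{\Omega} \;=\; \wh{h}_K(P_{\chi_1}),
\]
where $P_{\chi_1}\in E_1(H_{9p})\otimes\BC$ denotes the $\chi_1$-isotypic projection of $f(P_\tau)$ and $\Omega$ is an explicit CM period attached to $E_1/K$.

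Second, I would split both sides of this identity. On the analytic side, the factorization $L(s,E_1,\chi_1)=L(s,E_{3p})\cdot L(s,E_{9p^2})$, together with $\epsilon(E_{3p})=-1$ (forcing $L(1,E_{3p})=0$) and $\epsilon(E_{9p^2})=+1$, gives via the Leibniz rule
\[
L'(1,E_1,\chi_1)=L'(1,E_{3p})\cdot L(1,E_{9p^2}).
\]
On the geometric side, the kernel of $\chi_1$ on $\Gal(H_{9p}/K)$ cuts out precisely the subfield $L_{(3p)}$, so $\Tr_{H_{9p}/L_{(3p)}}$ realizes the $\chi_1$-isotypic projection of $f(P_\tau)$; combined with Proposition \ref{decomposition}, $R_1$ corresponds modulo torsion to a rational point on the cubic twist $E_{3p}$, and a direct N\'eron--Tate height comparison (the twist is an $\ov{\BQ}$-isomorphism) yields $\wh{h}_K(P_{\chi_1})=\wh{h}_\BQ(R_1)$.

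The main obstacle will be the period comparison: matching the CM period $\Omega$ coming out of Gross--Zagier with the product $\Omega_{3p}\cdot\Omega_{9p^2}$ of minimal real periods. This requires comparing the lattices of the invariant differentials on the minimal Weierstrass models of the two cubic twists with the CM period lattice of $E_1/K$. I expect a clean identity of the form $\Omega=\Omega_{3p}\cdot\Omega_{9p^2}$ up to an explicit rational factor dictated by the discriminant of $K$, the conductor of $\chi_1$, and the Chowla--Selberg formula; verifying this precise relation, and checking that the archimedean Rankin--Selberg integral together with the test-vector normalizations conspire to produce exactly the constant $1$ appearing in the theorem, is the computational heart of the proof.
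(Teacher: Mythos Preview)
Your overall architecture is correct and matches the paper's: apply the explicit Gross--Zagier formula of Cai--Shu--Tian to $(E_1,\chi)$, factor the $L$-function, and compare periods. But there is a genuine gap on the geometric side.

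Your claim that ``$\Tr_{H_{9p}/L_{(3p)}}$ realizes the $\chi_1$-isotypic projection'' is incorrect. Tracing over $\ker\chi_1\subset\Gal(H_{9p}/K)$ only lands you in $E_1(L_{(3p)})$; the residual action of $\Gal(L_{(3p)}/K)\cong\BZ/3\BZ$ still decomposes $E_1(L_{(3p)})_\BQ$ into three pieces, and a priori $R_1$ could mix all of them. In the paper, what appears on the Gross--Zagier side is the $\chi$-twisted Heegner cycle $P^0_\chi(f)=\tfrac{1}{9}\sum_{\sigma}R_1^{\sigma}\chi^{-1}(\sigma)$, a \emph{complex} linear combination, and the task is to compute the $K$-bilinear height $\langle P^0_\chi(f),P^0_{\chi^{-1}}(f)\rangle_{K,K}$ in terms of $\wh h_\BQ(R_1)$. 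The key ingredient you are missing is Corollary~\ref{galois}: the Galois action of the generator $\sigma_{1+3\omega_3}$ of $\Gal(L_{(3p)}/K)$ on $R_1$ \emph{equals} the CM action $[\omega^2]$. This ``Galois $=$ CM'' identity, together with $|1+\omega^2|=|\omega^2|=1$, is what gives $\langle R_1,R_1^{\sigma_{1+3\omega_3}}\rangle_{K,K}=-\tfrac12\wh h_K(R_1)$ and collapses the twisted sum to $\tfrac{1}{9}\wh h_\BQ(R_1)$. Without it your ``direct height comparison'' does not go through; the fact that cubic twisting is a $\ov\BQ$-isomorphism is not enough by itself.

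A second, smaller gap: the newform $f$ is \emph{not} invariant under the admissible order $\CR'$ at $3$, so the local factor there is not simply ``optimal''. The paper computes $\beta^0_3(f_3,f_3)=\Vol(K_3^\times/\BQ_3^\times)/12$ directly via matrix coefficients in the Kirillov model (Proposition~\ref{local}); the resulting ratio $12$ against the admissible test vector, together with $(f,f)_{\CR'^\times}=2/3$ and the $\#\Pic$ ratio $1/9$, are exactly what makes the final constant equal $1$. You correctly flagged the period comparison as work to be done; the paper handles it via the explicit relations $\Omega_{3p}\Omega_{9p^2}=p^{-1}\Omega_1^2$ and $\sqrt{3}\,\Omega_1^2=8\pi^2(\phi,\phi)_{\Gamma_0(27)}$.
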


Define the Heegner point
\[z=\Tr_{H_{9p}/L_{(3,p)}}f(P_\tau)\in E_1(L_{(3,p)}).\]
It follows from Proposition \ref{decomposition} that, modulo torsion points, we have
\[R_1+R_2=3z.\]
Then the main Theorem \ref{main} follows from the nontriviality of the Heegner point $z$. By a refinement of the Kronecker congruence for modular functions at supersingular points (Proposition \ref{prop: Kronecker}), we can analyze the behavior of the Heegner point $z$ modulo $p$. And the nontriviality of the Heegner point $z$ follows from the fact that the reduction of $z\mod p$ always has a nontrivial $3$-component, which is achieved by Proposition \ref{finite} and Theorem \ref{thm:1}. 

In the final subsection, by comparing the explicit height formulae with the B-SD conjectures, we give the variants of the B-SD conjectures of the relate elliptic curves in terms of Heegner points.

For any integer $c\geq 1$, let $\CO_c$ be the order of $K$ of conductor $c$ and let $H_c$ be the ring class field of conductor $c$ which is characterized by the isomorphism
\[\Gal(H_c/K)\simeq \Pic(\CO_c)\]
under class field theory. We will using the following field-extension diagram:
\[\xymatrix{&H_{9p}=H_{3p}(\sqrt[3]{3})\ar@{-}[dl]^{3}\ar@{-}[dr]\ar@{-}[dd]&\\
            H_{3p}\ar@{-}[dd]_{(p+1)/3}&&H_9\ar@{=}[dd]\\
            &L_{(3,p)}=K(\sqrt[3]{3},\sqrt[3]{p})\ar@{-}[dr]^{3}\ar@{-}[d]^{3}\ar@{-}[dl]_{3}&\\
            L_{(p)}=K(\sqrt[3]{p})\ar@{-}[dr]^{3}&L_{(3p)}=K(\sqrt[3]{3p})\ar@{-}[d]^{3}&L_{(3)}=K(\sqrt[3]{3})\ar@{-}[dl]_{3}\\
            &K\ar@{-}[d]^{2}&\\
            &\BQ.&\\
            }\]

\noindent {\bf Acknowledgements.} The authors would like to thank professor Ye Tian his for long-term support and  encouragement. The second-named author (X. Song) would like to thank professor Chen-Bo Zhu for his encouragement and support. We would like to thank John Voight for his suggestion to use the Kronecker congruence to compute the coordinates modulo $p$ in the supersingular case. We also would like to thank Jinbang Yang for helpful discussions which lead a proof of Proposition \ref{finite}, and thanks are also due to Sheng Meng for valuable discussions on algebraic geometry.

\section{Modular Curves}
\subsection{The Modular Curve $X_0(27)$}
The group $\GL_2(\BQ)^+$ of invertible matrices with rational entries and positive determinants acts on the Poinc\'are upper half plane $\CH$ by linear fractional transformations:
\[\matrixx{a}{b}{c}{d}z=\frac{az+b}{cz+d},\quad z\in \CH.\]Let $\Gamma_0(27)$ be the subgroup of $\SL_2(\BZ)$ consisting of all matrices
$$\left (\begin{array}{cc}a&b\\c&d
\end{array} \right ) \textrm{ with $c\equiv 0\mod 27$,}$$
and define \[Y_0(27)=\Gamma_0(27)\backslash \CH \textrm{ and } C=\Gamma_0(27)\backslash  \BP^1(\BQ).\]
Then $Y_0(27)$ is an affine smooth curve defined over $\BQ$ and the modular curve $X_0(27)$ of level $\Gamma_0(27)$ is defined to be its projective closure. The underlying Riemann surface of $X_0(27)$ is given as
\[X_0(27)(\BC)=Y_0(27)(\BC)\bigsqcup C.\]
For each $z\in \CH\sqcup \BP^1(\BQ)$, denote by $[z]$ the point on $X_0(27)$ represented by $z$.
Elements in
\[ C=\{[0],[\pm 1/3],[\pm 1/9],[\infty]\}\]
are called cusps of $X_0(27)$ and the cusps $[0],[\infty]$ are rational and the other four cusps are defined over the imaginary quadratic field $K=\BQ(\sqrt{-3})$. The modular curve $X_0(27)$ is of genus $1$ with a rational cusp $[\infty]$, so we have an elliptic curve $(X_0(27),[\infty])$ over $\BQ$ with $[\infty]$ as its zero element.

Define $N$ to be the normalizer of $\Gamma_0(27)$ in $\GL_2(\BQ)^+$. The linear fractional action of $N$ on $X_0(27)$ induces an injective homomorphism
\[\Phi:N/\BQ^\times\Gamma_0(27)\hookrightarrow \Aut_\BC(X_0(27))=\CO_K^\times\ltimes X_0(27)(\BC).\]
Put
\[W=\matrixx{0}{1}{-27}{0},\quad A=\matrixx{1}{1/3}{0}{1},\]
and
\begin{equation}\label{equ:1}
B=-\frac{1}{27}WA^{-1}W=\matrixx{1}{0}{9}{1},\quad C=-\frac{1}{27}WB^{-1}WB=\matrixx{4}{1/3}{9}{1}.
\end{equation}
It follows from \cite[Theorem 8]{AL1970} and \cite{Ogg80} that the quotient group $N/\BQ^\times\Gamma_0(27)$ is generated by $W$ and $A$, and moreover, we have
\[N/\BQ^\times\Gamma_0(27)= \langle B\rangle^{\BZ/3\BZ} \ltimes \left(\langle W\rangle^{\BZ/2\BZ}\ltimes\langle C\rangle^{\BZ/3\BZ}\right).\]

\begin{prop}\label{mc}
\begin{itemize}
\item[1.] The elliptic curve $(X_0(27,[\infty]))$ is isomorphic to $E_1$ and we identify $X_0(27)$ with $E_1$ so that the cusp $[0]$ has coordinates $(12,36)$. Moreover, the isomorphism can be given in terms of quotients of the Dedekind $\eta$-functions:
\begin{eqnarray*}
&X_0(27)\cong E_1:y^2=x^3-432,\\
&z\in \CH\mapsto \left(4\frac{\eta(9z)^4}{\eta(3z)\eta(27z)^3},8\frac{\eta(3z)^3}{\eta(27z)^3}+36\right).
\end{eqnarray*}
\item[2.] We have an embedding
\[\Phi: N/\BQ^\times\Gamma_0(27)\hookrightarrow \CO_K^\times\ltimes C.\]
For any point $P\in X_0(27)$, we have
\[\Phi(A)(P)=[\omega]P,\quad \Phi(W)(P)=[-1]P+[0],\]
and
\[\Phi(B)(P)=[\omega^2]P+[1/9],\quad \Phi(C)(P)=P+[1/9].\]
\end{itemize}
\end{prop}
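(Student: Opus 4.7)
The proof splits along the two claims of the proposition. For Part 1, the plan is first to identify the abstract isomorphism class: the space $S_2(\Gamma_0(27))$ is one-dimensional, spanned by a CM newform of conductor $27$ and $j$-invariant $0$, so $(X_0(27), [\infty])$ is $\BQ$-isomorphic to $E_1$. To make the isomorphism explicit, I would exhibit the candidate functions $x(z) = 4\eta(9z)^4/(\eta(3z)\eta(27z)^3)$ and $y(z) = 8\eta(3z)^3/\eta(27z)^3 + 36$ as rational functions on $X_0(27)$: a Ligozat-type computation shows each is a weight-zero modular function holomorphic off the cusps, with cusp orders matching those of coordinate functions of degrees $3$ and $2$ on an elliptic curve with origin $[\infty]$; the Weierstrass identity $y^2 = x^3 - 432$ then follows by comparing the first few terms of the $q$-expansions and invoking the valence formula. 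To locate the image of the cusp $[0]$, I substitute $z = -1/(27w)$ and apply $\eta(-1/z) = \sqrt{-iz}\,\eta(z)$; the eta quotients for $x$ and $y$ rewrite as eta quotients at $w$ whose limits as $w \to i\infty$ are immediate from $\eta(w) = q^{1/24}\prod_n(1-q^n)$, yielding $(12, 36)$.

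For Part 2, I would first argue that $\Phi$ lands in $\CO_K^\times \ltimes C$: any element of $N/\BQ^\times\Gamma_0(27)$ acts by a holomorphic automorphism of the CM elliptic curve $X_0(27) = E_1$, hence factors through $\Aut_\BC(E_1) = \CO_K^\times \ltimes E_1(\BC)$; and since the linear fractional action permutes the six cusps setwise, the translation component lies in $C$. Injectivity of $\Phi$ is inherited from the embedding into $\Aut_\BC(X_0(27))$ implicit in \cite[Theorem 8]{AL1970} and \cite{Ogg80}. For each generator I compute via the $\eta$-quotient coordinates. For $A : z \mapsto z + 1/3$, the transformation $\eta(z+1) = e^{\pi i/12}\eta(z)$ gives, after bookkeeping of twelfth roots of unity, $x(Az) = \omega\, x(z)$ and $y(Az) = y(z)$, whence $\Phi(A) = [\omega]$. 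For $W$, the Fricke substitution combined with $\eta(-1/z) = \sqrt{-iz}\,\eta(z)$ shows $W$ is a $\BQ$-rational involution sending $[\infty]$ to $[0]$; and the only such involution of $E_1$ over $\BQ$ is $P \mapsto -P + [0]$, since $[0] = (12, 36)$ is not a $2$-torsion point and $E_1(\BQ)_\tor = \BZ/3\BZ$ offers no $2$-torsion translation alternative. The formulas for $\Phi(B)$ and $\Phi(C)$ then reduce to group-law manipulations on $E_1$ using the identities in \eqref{equ:1}; the one nontrivial computation is $[1/9] = [0] - [\omega^2][0] = (0, -12\sqrt{-3})$, which follows because the three points $[0], [\omega][0], [\omega^2][0]$ lie on the horizontal line $y = 36$ and hence sum to $[\infty]$.

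The main technical obstacle is the root-of-unity bookkeeping in the $\eta$-transformation laws, where repeated applications of $\eta(z+1) = e^{\pi i/12}\eta(z)$ and $\eta(-1/z) = \sqrt{-iz}\,\eta(z)$ introduce twenty-fourth roots of unity that must collapse to the clean rational factors appearing in the target group-theoretic formulas. The cleanest safeguard is a numerical sanity check of each identity $\Phi(g)(x(z), y(z)) = (x(gz), y(gz))$ at a convenient generic $\tau \in \CH$, which pins down the correct phases; rigidity of the eta-quotient description then upgrades the numerical match to an algebraic identity.
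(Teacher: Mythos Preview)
Your proposal is correct, and for Part~2 it takes a genuinely different route from the paper's argument. For $\Phi(A)$, you compute the action on the $\eta$-quotient coordinates directly; the paper instead observes that $\Phi(A)$ is an order-$3$ automorphism fixing $[\infty]$, hence equals $[\omega]$ or $[\omega^2]$, and then distinguishes the two by the action on the N\'eron differential $dx/2y = \phi(q)\,dq/q$ (since $A^*(dq) = \omega\,dq$ while $[\omega]^*(dx/2y) = \omega\,dx/2y$). For $\Phi(W)$, you use that the Atkin--Lehner involution is defined over $\BQ$ and that $E_1(\BQ)$ has no $2$-torsion to force $\Phi(W)(P) = -P + [0]$; the paper instead argues that if $\Phi(W)(P) = P + T$ then $\Phi(W)$ would be fixed-point-free, contradicting the explicit fixed point $[\sqrt{-1/27}] \in X_0(27)$. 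Your approach trades the conceptual differential/fixed-point arguments for explicit computation and an arithmetic rationality constraint; the paper's approach avoids the twenty-fourth-root bookkeeping you flag as the main obstacle, at the cost of needing to identify the N\'eron differential and the elliptic fixed point of $W$. Both are short and self-contained; the paper's version is perhaps more robust if one later changes the Weierstrass model or the $\eta$-quotient normalization. For Part~1 the paper simply cites \cite[2.5]{DV1}, so your more detailed sketch via Ligozat's criterion and $q$-expansion comparison is a strict elaboration rather than a different method.
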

\begin{proof}
For the modular isomorphism of $X_0(27)$ with the elliptic curve $E_1$ in terms of quotients of the Dedekind eta-functions, we refer to \cite[2.5]{DV1} and note that we changed the Weierstrass equation.

The second assertion follows from a similar argument as in \cite[Proposition 2.2]{CST17} which deals with the modular curve $X_0(36)$. Since the modular automorphism $\Phi(A)$ is of order $3$ and fixes the cusp $[\infty]$, as an automorphism on the elliptic curve $E_1$, $\Phi(A)$ must be the complex multiplication by a third root of unity.   Let $\phi=\sum_{n\geq 1}a_nq^n$ be the unique newform of level $\Gamma_0(27)$ and weight $2$. Then the N\'eron differential on $X_0(27)$ is $dx/2y=\phi(q)dq/q$. At $[\infty]$, it is represented by $dq$, since $\phi$ is a normalised cusp form. Then $\Phi(A)^*(dq)=\omega dq$. On the other hand, $[\omega](x,y)=(\omega x,y)$ and then $[\omega]^*(dx/2y)=\omega dx/2y$. So we have $\Phi(A)=[\omega]$. 

Since $\Phi(W)$ is of order $2$, it must be of the form 
\[\Phi(W)(P)=[\pm 1] P+T\]
where $T$ is a nontrivial torsion point. If $\Phi(W)(P)=P+T$, then $T$ is a point of exact order $2$ and $\Phi(W)$ has no fixed point which contradicts with the fact that $\Phi(W)$ has a fixed point  represented by $\sqrt{\frac{-1}{27}}\in \CH$. So \[\Phi(W)(P)=[-1] P+T\]
and if we take $P=[\infty]$, we see immediately that $T=[0]$. 

Then we see that for any point $P\in X_0(27)$, we have
\[\Phi(A)(P)=[\omega]P,\quad \Phi(W)(P)=[-1]P+[0],\]
and the formulae for $\Phi(B)$ and $\Phi(C)$ follow directly from (\ref{equ:1}).
\end{proof}
\begin{coro}
The coordinates of the cusps under the Weierstrass equation $$E_1:y^2=x^3-432$$ are given as follows:
\[\begin{aligned}
&[\infty]=O,&&[0]=(12,36),&&[1/3]=(12\omega,36),\\
&[-1/3]=(12\omega^2,36),&&[1/9]=(0,-12\sqrt{-3}),&&[-1/9]=(0,12\sqrt{-3}).
\end{aligned}\]
\end{coro}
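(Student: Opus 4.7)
My plan is to identify each remaining cusp as a specific point on $E_1(\BC)\simeq X_0(27)(\BC)$ by applying the explicit formulas for $\Phi(A)$ and $\Phi(W)$ from Proposition~\ref{mc}(2) to the known coordinates $[0]=(12,36)$, then using the group law on $E_1$.

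For the cusps $[\pm 1/3]$: since $A\cdot 0=1/3$ in $\BP^1(\BQ)$ and $A^2\cdot 0=2/3\equiv -1/3\pmod{\Gamma_0(27)}$ (via the translation $\matrixx{1}{-1}{0}{1}\in\Gamma_0(27)$), the identity $\Phi(A)(P)=[\omega]P$ immediately gives $[1/3]=(12\omega,36)$ and $[-1/3]=(12\omega^2,36)$, using that $[\omega]$ acts on $E_1:y^2=x^3-432$ by $(x,y)\mapsto(\omega x,y)$.

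For $[\pm 1/9]$ I would combine $\Phi(W)(P)=[-1]P+[0]$ with the linear fractional computations $W\cdot(1/9)=-1/3$ and $W\cdot(-1/9)=1/3$, which rearrange to
\[
[1/9]=[0]-[-1/3]=(12,36)+(12\omega^2,-36),\qquad [-1/9]=[0]-[1/3]=(12,36)+(12\omega,-36).
\]
A chord-and-tangent computation then finishes the proof: for $[1/9]$, the slope simplifies (after rationalising) to $m=2-2\omega$, and the relations $\omega^2+\omega+1=0$ and $1+2\omega=\sqrt{-3}$ yield $x=0$ and $y=-12\sqrt{-3}$; thus $[1/9]=(0,-12\sqrt{-3})$, and the analogous calculation (or complex conjugation, using that $[\pm 1/9]$ are $\Gal(K/\BQ)$-conjugate) yields $[-1/9]=(0,12\sqrt{-3})$. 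The only step requiring actual computation is this final addition on $E_1$, and I anticipate no conceptual obstacle beyond careful arithmetic with third roots of unity and the sign conventions in Proposition~\ref{mc}(2).
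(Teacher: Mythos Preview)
Your proposal is correct and follows essentially the same approach as the paper: both arguments apply the formulas $\Phi(A)(P)=[\omega]P$ and $\Phi(W)(P)=-P+[0]$ from Proposition~\ref{mc}(2) to known cusps and finish with the group-law addition on $E_1$ to obtain $[\pm 1/9]$. The only minor difference is that the paper first computes $[1/3]=(12\omega,36)$ directly from the transformation rule $X(z+1/3)=e^{2\pi i/3}X(z)$, $Y(z+1/3)=Y(z)$ of the $\eta$-quotients (then applies $\Phi(A)$ to get $[-1/3]$), whereas you obtain both $[\pm 1/3]$ purely from $\Phi(A)$ acting on $[0]$; your route is slightly more economical since it avoids any further $\eta$-function manipulation.
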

\begin{proof}
Put 
\[X(z)=\frac{\eta(9z)^4}{\eta(3z)\eta(27z)^3},\quad Y(z)=\frac{\eta(3z)^3}{\eta(27z)^3}.\]
Recall the Dedekind $\eta$-function
\[\eta(z)=q^{1/24}\prod_{n\geq 1}(1-q^n),\quad q=e^{2\pi i z},\]
and we have the transformation formulae:
\[\eta(z+1)=e^{2\pi i/24}\eta(z),\quad \eta(-1/z)=\sqrt{\frac{z}{i}}\eta(z).\]
Then
\[X\left(\frac{-1}{27z}\right)=\frac{\eta\left(\frac{-1}{3z}\right)^4}{\eta\left(\frac{-1}{9z}\right)\eta\left(\frac{-1}{z}\right)^3}=3\frac{\eta(3z)^4}{\eta(9z)\eta(z)^3}=3\prod_{n\geq 1}\frac{1-q^{3n}}{(1-q^{9n})(1-q^{n})^3},\]
\[Y\left(\frac{-1}{27z}\right)=\frac{\eta\left(\frac{-1}{9z}\right)^3}{\eta\left(\frac{-1}{z}\right)^3}=27\frac{\eta(9z)^3}{\eta(z)^3}=27q\prod_{n\geq 1}\frac{(1-q^{9n})^3}{(1-q^n)^3}.\]
Taking $q\ra +\infty$ as $z\ra 0i$, we see
\[[\infty]=O.\]
Taking $q\ra 0$ as $z\ra +\infty i$, we see
\[[0]=(12,36).\]
Similarly, we have
\[X(z+1/3)=\frac{\eta(9z+3)^4}{\eta(3z+1)\eta(27z+9)^3}=e^{2\pi i/3}X(z),\]
\[Y(z+1/3)=\frac{\eta(3z+1)^3}{\eta(27z+9)^3}=Y(z).\]
Taking $z\ra 0$, we see
\[[1/3]=(12\omega,36).\]

By the action of the linear fractional transformation $A$ and the formula in Proposition \ref{mc}, we have 
\[[-1/3]=\Phi(A)([1/3])=[\omega][1/3]=(12\omega^2,36).\]

By the action of the Atkin-Lehner operator $W$ and the formula in Proposition \ref{mc}, we have 
\[[-1/9]=\Phi(W)([1/3])=[-1][1/3]+[0]=(0,12\sqrt{-3}),\]
and similarly, we can compute
\[[1/9]=(0,-12\sqrt{-3}).\]

\end{proof}

It is easy to see that
\[E_1(K)=E[3]=\{O,(12 \omega ^i,\pm 36),(0,\pm 12\sqrt{-3})\}_{i=0,1,2}.\]

\begin{prop}\label{torsion}
We have $E_1(L_{(3,p)})_\tor=E_1(K)$.
\end{prop}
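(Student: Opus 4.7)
The plan is to bound $E_1(L_{(3,p)})_\tor$ by combining reductions at two rational primes with a Kummer-theoretic argument that rules out $2$-torsion. Since $E_1/\BQ$ has discriminant $-2^{12}\cdot 3^9$, it has good reduction at every prime $\ell\ne 2,3$, so for any prime $\fP$ of $L_{(3,p)}$ above such $\ell$ reduction yields the standard injection
\[E_1(L_{(3,p)})_{\tor,\ \text{prime to }\ell}\hookrightarrow E_1(k(\fP)),\]
where $k(\fP)$ is determined by the Frobenius action in the Kummer extension $L_{(3,p)}/K$.

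For the first reduction I would take $\ell=5$: since $5\equiv 2\pmod 3$ the prime $(5)$ is inert in $K$ with residue field $\BF_{25}$, and because $a^8\equiv 1\pmod 5$ for every $a\in(\BZ/5)^\times$, Frobenius at $(5)$ acts trivially on both $\sqrt[3]{3}$ and $\sqrt[3]{p}$ (if $p=5$ the prime is totally ramified in $K(\sqrt[3]{p})/K$, but the residue field at $\fP\mid 5$ is still $\BF_{25}$). Hence the residue field at any $\fP\mid 5$ equals $\BF_{25}$, and since $E_1$ is supersingular at the inert prime $5$, $|E_1(\BF_{25})|=(5+1)^2=36=2^2\cdot 3^2$. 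For the second I would take $\ell=7$, splitting in $K$ as $(7)=\pi\bar{\pi}$; since $3\notin\{1,6\}\subset\BF_7^\times$ (the subgroup of cubes), Frobenius at $\pi$ acts non-trivially on $\sqrt[3]{3}$, hence has order $3$ in $\Gal(L_{(3,p)}/K)\cong(\BZ/3\BZ)^2$, so the residue field at any prime of $L_{(3,p)}$ above $\pi$ is $\BF_{7^3}$. A direct point count gives $|E_1(\BF_7)|=9$, whence $a_7=-1$ and $|E_1(\BF_{7^3})|=343+1-20=324=2^2\cdot 3^4$. Combining the two bounds, every $q$-primary component of $E_1(L_{(3,p)})_\tor$ is controlled: the $5$- and $7$-parts are trivial, the $3$-part divides $9$, and the $2$-part divides $4$.

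To rule out $2$-torsion, I would observe that the three non-trivial $2$-torsion points of $E_1$ are $(6\omega^i\sqrt[3]{2},0)$, defined over $K(\sqrt[3]{2})$; if any lay in $L_{(3,p)}=K(\sqrt[3]{3},\sqrt[3]{p})$ then Kummer theory would force $2\equiv 3^ap^b\pmod{(K^\times)^3}$ for some $a,b\in\{0,1,2\}$, contradicting the fact that $(2),\fp_3,(p)$ are three distinct primes of $\CO_K$ (the $(2)$-valuation of $2\cdot 3^{-a}p^{-b}$ equals $1$, not a multiple of $3$). Hence the $2$-part of the torsion is trivial and $|E_1(L_{(3,p)})_\tor|$ divides $9$, which together with the inclusion $E_1(K)=E_1[3]\subset E_1(L_{(3,p)})_\tor$ gives equality. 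The main technical point is checking the Frobenius-splitting behaviour at the two auxiliary primes uniformly in $p$; once this is in place the rest of the argument is essentially bookkeeping.
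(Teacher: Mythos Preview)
Your argument is correct. The two reductions are computed accurately: at $5$ every element of $\BF_5^\times$ is a cube in $\BF_{25}^\times$ (since $(\BF_5^\times)^4=1$ and $24/3=8$), so the residue field is $\BF_{25}$ and supersingularity gives $|E_1(\BF_{25})|=36$; at $7$ the cubes in $\BF_7^\times$ are $\{1,6\}$, so Frobenius has order $3$ and the residue field is $\BF_{7^3}$, and from $a_7=-1$ one gets $|E_1(\BF_{7^3})|=324$. Together these pin down the torsion order as a divisor of $36$, and your Kummer argument (the $(2)$-adic valuation of $2\cdot 3^{-a}p^{-b}$ is $1$, not a multiple of $3$) correctly excludes $\sqrt[3]{2}\in K(\sqrt[3]{3},\sqrt[3]{p})$ and hence all $2$-torsion. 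One cosmetic point: the model $y^2=x^3-432$ is not minimal at $2$ (the conductor of $E_1$ is $27$), so $E_1$ in fact has good reduction at $2$ as well; this does not affect your proof, which only uses reduction at $5$ and $7$.

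The paper proceeds quite differently. Rather than reducing at auxiliary primes, it invokes the CM theory of torsion: for a point $P$ of prime order $\ell$, the field $K(P)$ contains a subextension with Galois group $(\CO_K/\fl)^\times$, and a case check on the size of this group rules out $\ell\ne 2,3,7$ by degree and $\ell=7$ by divisibility by $6$. The primes $2$ and $3$ are then handled by a ramification argument showing $K(E_1[2])\cap L_{(3,p)}\subset L_{(3)}$ and $K(P)\subset L_{(3)}$ for $3$-power torsion, reducing both to the single assertion $E_1(L_{(3)})_\tor\simeq(\BZ/3\BZ)^2$. Your approach trades this structural CM input for two concrete point counts and an explicit Kummer obstruction; it is more elementary and entirely self-contained, while the paper's argument is more conceptual and would adapt more readily to other CM curves without recomputing reductions.
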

\begin{proof}
Let $\ell$ be a prime. Suppose $P\in E_1[\ell]$ is a torsion point of order $\ell$. By \cite[Theorem 5.1]{Shimurabook}, the field $K(P)$ contains a subfield over $K$ with Galois group $(\CO_K/\fl)^\times$, where $\fl$ is a prime ideal of $K$ above $\ell$. If $\ell\neq 2,3,7$,  then $[K(P):K]>9$. If $\ell=7$, then $(\CO_K/\pi)^\times=6$ and $6\mid [K(P):K]$. 


For $\ell=2$, we see that $K(E_1[2])/K$ has degree $3$,  and is unramified at $p$. Thus $$K(E_1[2])\cap L_{(3,p)}\subset L_{(3)}.$$ Since 
\begin{equation}\label{torsion}E_1(L_{(3)})_\tor \simeq \BZ/3\BZ\times\BZ/3\BZ,
\end{equation} 
we conclude that $E_1(L_{(3,p)})[2]=0$.

Finally, suppose $P\in E_1(L_{(3,p)})[3^\infty]$. Since $E_1$ has good reductions outside $3$, we conclude that $K(P)$ is unramified outside $3$, which forces that $K(P)\subset L_{(3)}$. Again by the fact (\ref{torsion}), we see that $$E_1(L_{(3,p)})[3^\infty]=E_1[3]=E_1(K).$$
\end{proof}

\subsection{Base Change to $K=\BQ(\sqrt{-3})$}
Let $U_0(27)\subset\GL_2(\wh{\BZ})$ be the subgroup consisting of matrices
$\left (\begin{array}{cc}a&b\\c&d
\end{array} \right )$ with $c\equiv 0\mod 27$, and let $U\subset U_0(27)$  be the subgroup consisting of matrices with $a\equiv d\mod 3$. Let $X_U$ be the modular curve over $\BQ$ whose underlying Riemann
surface is
\[X_U(\BC)=\GL_2(\BQ)^+\left\backslash \left(\left(\CH\bigsqcup\BP^1(\BQ)\right )\times \GL_2(\BA_f)\right/U\right).\]
Indeed, $X_0(27)$ is the modular curve $X_{U_0(27)}$ and therefore $X_U$ is a natural double cover of $X_0(27)$. Under class field theory, $ \BQ^\times_+\wh{\BZ}^\times/\BQ^\times_+\det(U)\simeq \Gal(K/\BQ).$
Noting that $\GL_2(\BQ)^+\cap U=\Gamma_0(27)$, we see that the
modular curve $X_U$ is isomorphic to $X_0(27)\times_\BQ K$ as a curve over $\BQ$ (cf. \cite[Chapter 6]{Shimurabook}). In adelic language, if $z\in \CH\bigsqcup \BP^1(\BQ)$ and $g\in \GL_2(\BA_f)$, we denote $[z,g]_U\in X_U$ the point corresponding to the coset of the pair $(z,g)$.

Put
$$U_0(27)/U=\langle\epsilon\rangle,\quad \epsilon=\left (\begin{array}{cc}1&0\\0&-1\end{array}\right ).$$
The non-trivial Galois action of $\Gal(K/\BQ)$ on $X_U$ is given by
the right translation of $\epsilon$ on $X_U$. We have
\[\Aut_\BQ(X_U)=\Aut_K(X_U)\rtimes\Gal(K/\BQ)\simeq (X_U(K)\rtimes \CO_K^\times)\rtimes \Gal(K/\BQ).\]
Let $N_{\GL_2(\BA_f)}(U)$ be the normalizer of $U$ in
$\GL_2(\BA_f)$. Then there is a natural homomorphism
$$ N_{\GL_2(\BA_f)}(U)/U\longrightarrow \Aut_\BQ(X_U)$$ induced by the right
translation on $X_U$. The curve $X_U$ is not geometrically connected
and has two connected components over $\BC$. An element $g\in
N_{\GL_2(\BA_f)}(U)$ maps one component of $X_U$ onto the other if and
only if it has image $-1$ under the composition of the following
morphisms:
\[\xymatrix{\GL_2(\BA_f)=\GL_2(\BQ)^+\GL_2(\wh{\BZ})\ar[r]^{\quad\quad \quad\quad \quad  \det}&\BQ^{\times}_+\widehat{\BZ}^\times\ar[r]&\BZ_3^\times/(1+3\BZ_3),}\]
where the last morphism is the projection from
$\widehat{\BZ}^\times$ to its $3$-adic factor.

Let $p\equiv 2,5\mod 9$ be a rational odd prime number. Put $$\tau=M\omega=\omega p/9,\quad M=\matrixx{p}{0}{0}{9}.$$
Let $\rho:K\rightarrow \M_2(\BQ)$ be the normalized embedding with a fixed point $\tau\in \CH$, i.e. we have
\[\rho(t)\begin{pmatrix}\tau\\1\end{pmatrix}=t\begin{pmatrix}\tau\\1\end{pmatrix},\quad \textrm{for any $t\in K$}.\]
Then the embedding $\rho:K\rightarrow\M_2(\BQ)$ is explicitly given by
\[\rho(\omega)=M\matrixx{-1}{-1}{1}{0}M^{-1}=
\begin{pmatrix} -1 & -p/9\\
9/p & 0\end{pmatrix}.\]
Let $R_0(27)$ be the standard Eichler order of discriminant $27$ in $\M_2(\BQ)$, i.e.
\[R_0(27)=\matrixx{\BZ}{\BZ}{27\BZ}{\BZ}.\]  Then $K\cap R_0(27)=\CO_{9p}$. Let $\CO_{K,3}$ be the completion of $\CO_K$ at the unique place above $3$. We have
\[ \CO_{K,3}^\times/\BZ_3^\times(1+9\CO_{K,3})=\langle \omega_3\rangle^{\BZ/3\BZ}\times\langle1+3\omega_3\rangle^{\BZ/3\BZ},\]
where $\omega_3$ is the $3$-local component of $\omega$.
It is straightforward to verify that $1+3\omega_3$ normalizes $U$, and hence it induces an automorphism of $X_U$.

\begin{thm}
For any point $P\in X_U$, we have
\[P^{1+3\omega_3}=[\omega^2]P.\]
\end{thm}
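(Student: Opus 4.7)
The strategy is to compare the adelic element $\alpha \in \GL_2(\BA_f)$ representing $1+3\omega_3$ (trivial at $\ell \neq 3$, with $3$-component $\rho_3(1+3\omega_3)$) with the global matrix $A^{-2} \in \GL_2(\BQ)^+$, where $A = \matrixx{1}{1/3}{0}{1}$. The plan is to show that $A^2 \alpha \in U$, so that right translation by $\alpha$ and by $A^{-2}$ define the same automorphism of $X_U$, and then to invoke Proposition \ref{mc} to identify this common automorphism with $[\omega^2]$.

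The first step is the explicit computation $\rho(1+3\omega) = I + 3\rho(\omega) = \matrixx{-2}{-p/3}{27/p}{1}$, followed by checking $A^2\alpha \in U$ place by place. At $\ell \neq 3$ this is immediate since $\alpha_\ell = I$ and $A^2 \in \GL_2(\BZ_\ell) = U_\ell$. At $\ell = 3$, multiplying out gives
\[
A^2 \alpha_3 = \matrixx{-2 + 18/p}{(2-p)/3}{27/p}{1},
\]
and one verifies: integrality of the entries (using that $p$ is a $3$-adic unit, and crucially that $p \equiv 2 \pmod 3$, so that $(2-p)/3 \in \BZ_3$); that the lower-left entry lies in $27\BZ_3$; that the determinant $7$ is a $3$-adic unit; and that $a \equiv d \equiv 1 \pmod 3$, the defining congruence of $U_3$.

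For the identification step, on a representative $[z, 1]_U$ the $\GL_2(\BQ)^+$-equivalence applied with $\gamma = A^2$ gives $[z, A^{-2}]_U = [A^2 z, I]_U$, which is precisely the linear fractional action of $A^2$ on $\CH$; by Proposition \ref{mc} this descends to $\Phi(A^2) = [\omega]^2 = [\omega^2]$ on $X_0(27) = E_1$. To confirm the induced automorphism of $X_U$ is the intended $[\omega^2]$ and not its composition with the nontrivial Galois involution $\epsilon$, I would note that $\det\alpha = \N_{K/\BQ}(1+3\omega) = 7 \equiv 1 \pmod 3$, so $\alpha$ preserves both geometric components of $X_U$ and defines a $K$-automorphism; under the identification $X_U \cong X_0(27)_K$ as $K$-schemes, this $K$-automorphism is exactly $[\omega^2]$.

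The main technical obstacle is the place-by-place congruence verification at $\ell = 3$, particularly the integrality of $(2-p)/3$ and the congruence $a \equiv d \pmod 3$; both require $p \equiv 2 \pmod 3$, which is why the hypothesis $p \equiv 2, 5 \pmod 9$ is essential.
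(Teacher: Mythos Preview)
Your proof is correct and follows essentially the same strategy as the paper: find a global element $\gamma\in N$ such that $\gamma\cdot(1+3\omega_3)\in U$, then identify the right translation by $1+3\omega_3$ with the linear-fractional action of $\gamma$ via Proposition~\ref{mc}. The only difference is the choice of representative: the paper takes $\gamma=BC^2$ and checks $BC^2(1+3\omega_3)\in U$, while you take $\gamma=A^2$; since $A^{-2}BC^2=\matrixx{-125}{-11}{216}{19}\in\Gamma_0(27)$, these represent the same coset in $N/\BQ^\times\Gamma_0(27)$, and both yield $\Phi(\gamma)=[\omega^2]$. Your choice $A^2$ is arguably cleaner, as $\Phi(A^2)=[\omega]^2=[\omega^2]$ is immediate, whereas for $BC^2$ one must use the formulae for $\Phi(B),\Phi(C)$ together with $[\omega^2][1/9]=[1/9]$ and $3[1/9]=O$.
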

\begin{proof}
Since $1+3\omega_3$ has determinant $\equiv 1\mod 3$, as an element in $\Aut_\BQ(X_U)$, they lie in the subgroup $\Aut_K(X_0(27))$.
Suppose $P=[z,1]$, $z\in \CH$, be an arbitrary point on $X_0(27)$. We have
\[BC^2(1+3\omega_3)=\left(\begin{pmatrix} 45/p - 38& -19p/3 + 5/3\\
513/p - 432 & -72p + 19\end{pmatrix}_3, BC^2\right)\in U,\]
where the subscript $3$ denotes the $3$-adic component of the adelic matrices.
Then
\[P^{1+3\omega_3}=\Phi(BC^2)(P)=[\omega^2]P.\]
\end{proof}

Let $\sigma : \wh{K}^\times \ra \Gal(K^\ab/K)$ be the Artin reciprocity map and we denote by $\sigma_t$ the image of $t\in \wh{K}^\times $. Let $P_\tau=[\tau,1]_{U_0(27)}$ be the CM point on $X_0(27)$ arising from $\tau$.
\begin{coro}\label{galois}
The point $P_\tau\in X_0(27)$ is defined over $H_{9p}$ and 
\[P_\tau^{\sigma_{1+3\omega_3}}=[\omega^2]P_\tau.\]
\end{coro}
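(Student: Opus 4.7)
The plan is to derive both assertions from Shimura's reciprocity law for CM points on modular curves, reducing the Galois-action statement to a direct application of the preceding theorem.

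For the first assertion—that $P_\tau$ is defined over $H_{9p}$—I would invoke the standard theory of CM points on $X_0(N)$: such a point, arising from an embedding $\rho:K\hookrightarrow \M_2(\BQ)$, is defined over the ring class field associated to the order $K\cap\rho^{-1}(R_0(N))$. Under our normalized embedding, this intersection was already observed to equal $\CO_{9p}$, so $P_\tau\in X_0(27)(H_{9p})$ follows immediately.

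For the Galois formula, my approach would be to lift $P_\tau$ to the point $[\tau,1]_U$ on the geometrically connected cover $X_U\simeq X_0(27)\times_\BQ K$, where $\tau$ remains fixed by $\rho$. Shimura's reciprocity law then says that, for any $t\in \wh{K}^\times$, the Galois action $\sigma_t$ on this CM point matches the adelic right-multiplication by $\rho(t)$ on the level-structure coordinate:
\[[\tau,1]_U^{\sigma_t}=[\tau,\rho(t)]_U.\]
Specializing to the idele $t=1+3\omega_3\in \CO_{K,3}^\times\subset\wh{K}^\times$ concentrated at the prime above $3$, and then invoking the immediately preceding theorem—which identifies the corresponding adelic automorphism of $X_U$ with the complex multiplication $[\omega^2]$—yields $P_\tau^{\sigma_{1+3\omega_3}}=[\omega^2]P_\tau$. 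This identity descends from $X_U$ back down to $X_0(27)$ without change.

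The main obstacle I anticipate is bookkeeping on sign/inversion conventions: one must make sure that the normalization of the Artin reciprocity map $\sigma$ is compatible with the right-action convention used in the preceding theorem, so that $\sigma_{1+3\omega_3}$ corresponds to $\rho(1+3\omega_3)$ (as opposed to its inverse) and the answer really is $[\omega^2]$ rather than $[\omega]$. Once this compatibility check is in hand, the corollary reduces to a direct restatement of the preceding theorem in Galois-theoretic language.
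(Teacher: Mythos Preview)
Your proposal is correct and follows essentially the same approach as the paper: both arguments invoke Shimura's reciprocity law on the cover $X_U$, identify the field of definition via the order $\CO_{9p}$ (the paper phrases this as $\wh{K}^\times\cap U=\wh{\CO_{9p}}^\times$, you as $K\cap R_0(27)=\CO_{9p}$), apply the preceding theorem to convert the right-translation by $1+3\omega_3$ into $[\omega^2]$, and then descend along $X_U\to X_0(27)$.
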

\begin{proof}
By Shimura's reciprocity law,
\[P_{\tau,U}^{\sigma_t}=P_{\tau,U}^t=[\tau,t]_U,\quad t\in \wh{K}^\times.\]
Since $\wh{K}^\times\cap U=\wh{\CO_{9p}}^\times$, we see that $P_{\tau,U}$ is defined over the ring class field $H_{9p}$. Also by Shimura's reciprocity law, 
\[P_{\tau,U}^{\sigma_{1+3\omega_3}}=P_{\tau,U}^{1+3\omega_3}=[\omega^2]P_{\tau,U},\]
and the statements for $P_\tau\in X_0(27)$ follow from the natural projection $X_U\ra X_0(27)$.

\end{proof}

\begin{prop}\label{LCF}
Let $p\equiv 2,5\mod 9$ be odd primes.
\begin{itemize}
\item[1.] The field $H_{9p}=H_{3p}(\sqrt[3]{3})$ with Galois group $\Gal(H_{9p}/H_{3p})\simeq \langle1+3\omega_3\rangle^{\BZ/3\BZ}$, and
\[\left(\sqrt[3]{3}\right)^{\sigma_{1+3\omega_3}-1}=\omega^2.\]
\item[2.] The field $K(\sqrt[3]{p})$ is contained in $H_{3p}$. 
\item[3.] The field extension $H_{3p}/K$ is totally ramified at $p$ and $H_{9p}/H_{3p}$ is split at the places above $p$.
\item[4.] We have $\left(\sqrt[3]{3}\right)^{\sigma_{\omega_3}-1}=1$ and
\[\left(\sqrt[3]{p}\right)^{\sigma_{\omega_3}-1}=\left\{\begin{aligned} {\omega},&\quad p\equiv 2\mod 9 \\{\omega^2},&\quad p\equiv 5\mod 9  \end{aligned}\right.\]
\end{itemize}
\end{prop}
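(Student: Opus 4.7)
The proof proceeds in four parts matching the proposition.

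For Part 1, I first compute class numbers via Cox's formula for orders in $K=\BQ(\sqrt{-3})$: with $h_K=1$, $|\CO_K^\times|=6$, $\CO_c^\times=\{\pm 1\}$ for $c\geq 2$, $\left(\tfrac{-3}{3}\right)=0$ and $\left(\tfrac{-3}{p}\right)=-1$ (as $p\equiv 2\pmod 3$), this yields $h(\CO_{3p})=p+1$ and $h(\CO_{9p})=3(p+1)$, so $[H_{9p}:H_{3p}]=3$. The standard containment $H_d\subset H_c$ whenever $d\mid c$ gives $H_{3p},\,H_9=K(\sqrt[3]{3})\subset H_{9p}$, while $H_9\not\subset H_{3p}$ since $9\nmid 3p$; a degree count then forces $H_{9p}=H_{3p}(\sqrt[3]{3})$. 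I identify $\Gal(H_{9p}/H_{3p})$ via global class field theory with $\hat\CO_{3p}^\times/\hat\CO_{9p}^\times$, which localizes at $3$ to $\BZ_3^\times(1+3\CO_{K,3})/\BZ_3^\times(1+9\CO_{K,3})\simeq\BF_3$; the image of $1+3\omega_3$ generates because $\omega\equiv 1+2\pi\pmod 3$ (with $\pi=\sqrt{-3}$) has nonzero $\pi$-coefficient in $\CO_{K,3}/3\CO_{K,3}$. For the explicit action $(\sqrt[3]{3})^{\sigma_{1+3\omega_3}-1}=\omega^2$, since $K(\sqrt[3]{3})/K$ is ramified only at $\mathfrak{p}_3$ the action reduces to the cubic Hilbert symbol $(3,\,1+3\omega)_3$ at $\mathfrak{p}_3$; I would evaluate this via the factorization $3=-\omega(1-\omega)^2$ together with bimultiplicativity, or alternatively extract it from the Shimura-reciprocity identity $P_\tau^{\sigma_{1+3\omega_3}}=[\omega^2]P_\tau$ of Corollary \ref{galois}.

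Parts 2 and 3 reduce to local class field theory. For Part 2, verify that $\hat\CO_{3p}^\times$ lies in the Artin kernel of $K(\sqrt[3]{p})/K$: at $\mathfrak{p}_p$ the tame symbol $(p,u)_3=\bar{u}^{(p^2-1)/3}\in\mu_3$ vanishes on $\CO_{3p,p}^\times$ because $\BF_p^\times\subset(\BF_{p^2}^\times)^3$ (as $3\nmid p-1$ when $p\equiv 2\pmod 3$); at $\mathfrak{p}_3$, the hypothesis $p\equiv 2,5\pmod 9$ is exactly what is needed to force vanishing of the wild symbol on $\BZ_3^\times(1+3\CO_{K,3})$, via a short congruence expansion on the unit filtration. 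For Part 3, the inertia at $\mathfrak{p}_p$ in $\Gal(H_{3p}/K)$ is the image of $\CO_{K,p}^\times$, of order $|\CO_{K,p}^\times/\CO_{3p,p}^\times|=|\BF_{p^2}^\times/\BF_p^\times|=p+1=[H_{3p}:K]$, hence $H_{3p}/K$ is totally ramified at $p$. Since $\CO_{3p,p}=\CO_{9p,p}$ (as $9$ is a $p$-adic unit), the inertia in $\Gal(H_{9p}/K)$ again has order $p+1$, so $H_{9p}/H_{3p}$ is unramified above $p$; an analogous decomposition-group computation gives residue degree $1$, forcing the prime above $p$ to split completely in $H_{9p}/H_{3p}$.

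For Part 4, since $\omega\in K^\times$ global reciprocity yields $\sigma_\omega=1$, so $\sigma_{\omega_3}=\prod_{v\neq\mathfrak{p}_3}\sigma_{\omega_v^{-1}}$. On $K(\sqrt[3]{3})/K$ (unramified outside $\mathfrak{p}_3$) each local factor acts trivially on units, giving $(\sqrt[3]{3})^{\sigma_{\omega_3}-1}=1$. For $\sqrt[3]{p}$, only the $\mathfrak{p}_p$-place contributes a nontrivial tame symbol $(p,\omega^{-1})_3=\omega^{-(p^2-1)/3}\in\mu_3$; a short computation gives $(p^2-1)/3\equiv 1\pmod 3$ when $p\equiv 2\pmod 9$ and $\equiv 2\pmod 3$ when $p\equiv 5\pmod 9$, yielding the stated $\omega$ and $\omega^2$ respectively (after reconciling the Artin-map sign convention).

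The hardest step will be the wild Hilbert-symbol evaluation at $\mathfrak{p}_3$ in Part 1: the direct algebraic route via $3=-\omega(1-\omega)^2$ and Artin--Hasse-type explicit reciprocity is workable but delicate, whereas the cleanest derivation is to deduce $(3,\,1+3\omega)_3=\omega^2$ from the Shimura-reciprocity formula for $P_\tau$ established in Corollary \ref{galois}, once the modular identification of $\sqrt[3]{3}$ via the Dedekind eta quotient of Proposition \ref{mc} is in place.
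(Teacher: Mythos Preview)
Your overall structure is sound, and Parts 3 and 4 match the paper closely. For splitting in $H_{9p}/H_{3p}$ the paper uses the slightly more elementary observation that $x^3-3$ has a root in $\BF_p$ (since $3\nmid p-1$), hence $\sqrt[3]{3}\in K_p$ by Hensel; this is equivalent to your decomposition-group argument. Part 4 is the same computation.

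The real divergence is in the wild Hilbert symbol at $3$ in Parts 1 and 2. The paper does not compute it directly at all: it exploits that $1+3\omega$ is a \emph{global} element, with $7\CO_K=(1+3\omega)(1+3\omega^2)$, and applies the product formula to transfer the computation to the tame place $v$ above $7$ at the prime $(1+3\omega)$:
\[\left(\sqrt[3]{3}\right)^{\sigma_{1+3\omega_3}-1}=\hilbert{1+3\omega}{3}{K_3}{3}=\hilbert{1+3\omega}{3}{K_v}{3}^{-1}=3^{-2}\bmod(1+3\omega)=\omega^2.\]
The same trick handles Part 2, reducing $\hilbert{1+3\omega}{p}{K_3}{3}$ (which is where the congruence $p\equiv 2,5\bmod 9$ enters, via $p\equiv 2$ or $5$ in $\CO_K/(1+3\omega)$) to tame symbols at primes above $2$ or $5$ and above $7$. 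This is considerably cleaner than your route (a) via Artin--Hasse. Your route (b) via Shimura reciprocity is not viable as stated: Corollary~\ref{galois} gives the action of $\sigma_{1+3\omega_3}$ on $P_\tau$, but no relation between the coordinates of $P_\tau$ and $\sqrt[3]{3}$ is established in the paper, so there is nothing to read the action off from. In fact the paper runs the logic in the opposite order: it first computes the action on $\sqrt[3]{3}$ via the product-formula trick, and only then concludes $H_{9p}=H_{3p}(\sqrt[3]{3})$.

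One small slip: ``$H_9\not\subset H_{3p}$ since $9\nmid 3p$'' is not a valid inference; the implication $d\mid c\Rightarrow H_d\subset H_c$ does not reverse in general (already $H_2=K=H_1$ here). You need an actual argument that $\sqrt[3]{3}\notin H_{3p}$, for instance by comparing $3$-inertia, or---as the paper effectively does---by first proving $(\sqrt[3]{3})^{\sigma_{1+3\omega_3}-1}=\omega^2\neq 1$.
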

\begin{proof}
For the first assertion, the Galois group
\[\Gal(H_{9p}/H_{3p})\simeq K^\times \wh{\CO_{3p}}^\times/K^\times\wh{\CO_{9p}}^\times\]
is cyclic of order $3$ and generated by $1+3\omega_3$. The ideal $7\CO_K=(1+3\omega)(1+3\omega^2)$ and let $v$ be the place corresponding to the prime ideal $(1+3\omega)$. Then by the local-global principle, we have
\[\left(\sqrt[3]{3}\right)^{\sigma_{1+3\omega_3}-1}=\hilbert{1+3\omega_3}{ 3}{K_3}{3}=\hilbert{1+3\omega_v}{ 3}{K_v}{3}^{-1}=3^{-2}\mod (1+3\omega)=\omega^2,\]
where $\hilbert{\cdot}{\cdot}{K_w}{3}$ denotes the $3$-rd Hilbert symbol over $K_w$.

For the second assertion, by the class field theory, it suffices to prove that, under the Artin reciprocity map, 
$$U_{3p}=\wh{\BZ}^\times(1+3\CO_{K,3})(1+p\CO_{K,p})\prod_{v\nmid 3p}\CO_{K,v}^\times$$ 
fixes $\sqrt[3]{p}$. Since $K(\sqrt[3]{p})$ is anticyclotomic over $K$, $\wh{\BZ}^\times$ fixes $\sqrt[3]{p}$.  Since $K(\sqrt[3]{p})/K$ is unramified outside $3p$, $\prod_{v\nmid 3p}\CO_{K,v}^\times$  fixes $\sqrt[3]{p}$. Using the Hilbert symbol, it is clear that $(1+p\CO_{K,p})$ fixes $\sqrt[3]{p}$. Finally, we look at the $3$-adic place. Since $1+9\CO_{K,3}\subset (K^\times_3)^3$, it suffices to prove $1+3\omega_3$ fixes $\sqrt[3]{p}$. We have
\[(\sqrt[3]{p})^{\sigma_{1+3\omega_3}-1}=\hilbert{1+3\omega_3}{p}{K_3}{3}.\]
If $p\equiv 2\mod 9$, then
\[(\sqrt[3]{p})^{\sigma_{1+3\omega_3}-1}=\hilbert{1+3\omega_3}{2}{K_3}{3}=\hilbert{1+3\omega_2}{2}{K_2}{3}^{-1}\hilbert{1+3\omega_v}{2}{K_v}{3}^{-1}=1.\]
Similarly, if $p\equiv 5\mod 9$, we also have $(\sqrt[3]{p})^{\sigma_{1+3\omega_3}-1}=1$.

For the third assertion, let $v$ be a place of $H_{3p}$ above $p$ and consider the following exact sequence
\[0\ra I_v\ra \Gal(H_{3p,v}/K_p)\ra \Gal(k_v/\BF_{p^2})\ra 0\]
where $k_v$ is the residue field of $H_{3p,v}$ and $I_v$ is the inertia group. Under the  class field theory, we have the following commutative diagram
\[\xymatrix{\Gal(H_{3p,v}/K_p)\ar[d]\ar[r]^{\simeq}& K_p^\times / \N (H_{3p,v}^\times)\ar[d]\\\Gal(H_{3p}/K)\ar[r]^{\simeq\quad }& \wh{K}^\times / K^\times \wh{\BZ}^\times(1+3p\wh{\CO_{K}})}\]
with vertical inclusions. Under the upper  isomorphism, we have
\[I_v\simeq \CO_{K,p}^\times / (\CO_{K,p}^\times \cap K^\times \wh{\BZ}^\times(1+3p\wh{\CO_{K}})).\]
Note
\[\CO_{K,p}^\times \cap K^\times \wh{\BZ}^\times(1+3p\wh{\CO_{K}})=\CO_K^\times \BZ_p^\times(1+p\CO_{K,p}).\]
On the other hand, 
 $$\Gal(H_{3p}/K)\cong \wh{K}^\times / K^\times \wh{\BZ}^\times(1+3p\wh{\CO_{K}})\simeq  \CO_{K,p}^\times/\CO_{K}^\times\BZ_p^\times(1+p\CO_{K,p}).$$
Hence 
\[\Gal(H_{3p}/K)=\Gal(H_{3p,v}/K_p)=I_v,\]
and thus $H_{3p}/K$ is totally ramified at $p$. Since $p\equiv 2\mod 3$, we see that $x^3-3\mod p$ has a root in $\BF_p^\times$, and hence by Hensel's lemma $x^3-3$ has a root in $\BQ_p$. Then we see that $\sqrt[3]{3}\in K_{p}$ and $p$ is split in
$K(\sqrt[3]{3}/K)$. Then it follows that $H_{9p}/H_{3p}$ is split at the places above $p$.

For the last assertion. Since $p\equiv 2\mod 3$, the prime $p$ is inert in $K$. Then 
\[\left(\sqrt[3]{p}\right)^{\sigma_{\omega_3}-1}=\hilbert{\omega_p}{p}{K_p}{3}^{-1}=\omega^{\frac{p^2-1}{3}}.\]
\end{proof}

\section{Heegner Points}
\subsection{A Kronecker congruence for supersingular points}
Let $j(z)$ be the $j$-function on the Poinc\'are upper half plane $\CH$.  For any integer $N\geq 1$, denote $j_N(z)=j(Nz)$. Then there is a polynomial $\Phi_N(X,Y)\in \BZ[X,Y]$, called the modular equation, such that
\[\Phi_N(j,j_N)=0.\]
The classical Kronecker congruence \cite[\S 11-C]{Cox89} states that for a prime $p$
\[\Phi_p(X,Y)\equiv (X-Y^p)(X^p-Y)\mod p\BZ[X,Y].\]
Let $K\subset \BC$ be an imaginary quadratic field, and let $\tau\in \CH\cap K$.
Then by CM theory, both $j(\tau)$ and $j(p\tau)$ are algebraic integers in some abelian extension of $K$. Then the Kronecker congruence states either
\[j(\tau)\equiv j(p\tau)^p\mod p\textrm{ or } j(p\tau)\equiv j(\tau)^p\mod p.\]

Dasgupta and Voight \cite[Proposition 5.2.1]{DV17} give a refined generalization of Kronecker congruence to modular forms when $p$ is split in the imaginary quadratic field $K$. In this subsection we give a refined generalization of Kronecker congruence to modular functions when $p$ is inert in $K$.

For $N\geq 4$, the modular curve $Y_0(N)$ has a smooth affine model over $\BZ$ which represents the functor "isomorphism classes of elliptic curves with $\Gamma_0(N)$-structures" \cite[\S 2.5]{Katz76}. Suppose $p\nmid N$. We have two degeneracy maps $\alpha:Y_0(Np)\ra Y_0(N)$ and $\beta:Y_0(Np)\ra Y_0(N)$ given as
\[\alpha(E,H,C)=(E,H),\quad \beta(E,H,C)=(E/C,(H\oplus C)/C),\]
where $H$ and $C$ are cyclic subgroups of the elliptic curve $E$ of order $N$ and $p$ respectively. Recall that we have the Atkin-Lehner operator $W_p$ at $p$:
\[W_p(E,H,C)=(E/C,(H\oplus C)/C,E[p]/C).\]
It follows that
\[\beta=\alpha \circ W_p.\]

The modular curve $Y_0(N)_{/\BZ[1/N]}$ is finite flat over the $j$-line $\BA^1_{/\BZ[1/N]}$, and the modular curve $Y_0(N)_{/\BQ}$ has function field $\BQ(j,j_N)$. Let $\CO(N)$ be the integral closure of $\BZ[1/N,j]$ in $\BQ(j,j_N)$. Then $\CO(N)$ is the affine coordinate ring of $Y_0(N)_{/\BZ[1/N]}$. 

\begin{prop}\label{prop: Kronecker}
Let $\tau\in\CH$ be imaginary quadratic and let $p\nmid N$ be a prime inert in the imaginary quadratic field $\BQ(\tau)$. Suppose $f\in \CO(N)$ is a modular function integral over $\BZ[1/N,j]$. Then both $f(p\tau)$ and $f(\tau)$ are $p$-adic integers and 
\[f(p\tau)\equiv f(\tau)^p\mod p.\]
\end{prop}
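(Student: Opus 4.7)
The plan is to realize $\tau$ and $p\tau$ as the two images under the degeneracy maps of a single point on $Y_0(Np)$, and to exploit the inertness of $p$ in $K=\BQ(\tau)$ to force supersingular reduction of $E_\tau$, so that the second image reduces to the Frobenius twist of the first. As a preliminary, integrality follows because $f$ satisfies a monic polynomial relation over $\BZ[1/N,j]$ while $j(\tau),j(p\tau)$ are algebraic integers by classical CM theory; hence $f(\tau),f(p\tau)$ are algebraic integers in some ring class field $L/K$, and any prime $\fP\mid p$ in $L$ places them in $\CO_{L,\fP}\subset\ov{\BZ}_p$ (using $p\nmid N$).

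For the moduli-theoretic lift, $\tau$ corresponds to $(E_\tau,H)\in Y_0(N)$ with $E_\tau=\BC/(\BZ+\BZ\tau)$ and $H$ the $\Gamma_0(N)$-structure. Setting $C=\langle 1/p\rangle\subset E_\tau$, the triple $(E_\tau,H,C)$ defines a point $P\in Y_0(Np)$ with $\alpha(P)=(E_\tau,H)$ corresponding to $\tau$ and $\beta(P)=(E_\tau/C,(H\oplus C)/C)\cong (E_{p\tau},H')$ corresponding to $p\tau$. Because $p$ is inert in $K$ and $E_\tau$ has CM by an order in $K$, Deuring's theorem forces $\bar E_\tau$ to be supersingular at $\fP$, hence to carry a unique finite flat subgroup scheme of order $p$, namely $\ker(F:\bar E_\tau\to\bar E_\tau^{(p)})$ for the relative Frobenius $F$. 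The reduction of $C$ must therefore equal $\ker F$, so $(E_{p\tau},H')$ reduces to the Frobenius twist $(\bar E_\tau^{(p)},\bar H^{(p)})$ of $(\bar E_\tau,\bar H)$.

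To conclude, observe that the reduction $\bar f$ is a function on $Y_0(N)_{\BF_p}$ defined over $\BF_p$; functoriality of the absolute Frobenius on $\BF_p$-schemes then gives $\bar f(x^{(p)})=\bar f(x)^p$ for any $\ov{\BF}_p$-point $x$. Applied to $x=(\bar E_\tau,\bar H)$ this yields
\[f(p\tau)\equiv \bar f(\bar E_\tau^{(p)},\bar H^{(p)})=\bar f(\bar E_\tau,\bar H)^p\equiv f(\tau)^p\pmod{\fP},\]
which is the asserted congruence (reading $\pmod p$ as $\pmod{\fP}$ for any prime of $\ov{\BZ}$ above $p$, as is standard in the Kronecker congruence).

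The main subtlety is tracking the $\Gamma_0(N)$-structure through reduction and the Frobenius twist: while the $E$-factor of the identification is immediate from the uniqueness of the order-$p$ subgroup scheme in supersingular reduction, the level-$N$ structure requires careful work with the integral model $Y_0(Np)_{/\BZ[1/Np]}$ and the extended degeneracy maps there (and for small $N$ where $Y_0(N)$ is only a coarse moduli space, one rigidifies with an auxiliary level prime to $p$ and descends). Once this identification is in hand the congruence is formal; geometrically it is the reflection, at the inert CM point $\tau$, of the familiar decomposition of $X_0(p)\bmod p$ as two copies of $X(1)$ meeting transversally at supersingular points.
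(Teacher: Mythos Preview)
Your proof is correct and shares the same core idea as the paper: supersingular reduction of the CM point at an inert prime, followed by the identification of the quotient by the order-$p$ cyclic subgroup with the Frobenius twist. The integrality argument is identical.

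The difference is in packaging. The paper observes $\beta=\alpha\circ W_p$ and then invokes the result of Deligne--Rapoport (cf.\ Ribet, Prop.~3.8(i)) that the Atkin--Lehner involution $W_p$ acts on the supersingular locus of $X_0(Np)_{/\BF_p}$ as the Frobenius $\Frob_p$; the congruence then drops out in one line,
\[\wt f(px)=\beta^*\wt f(x)=\alpha^*\wt f(W_p x)=\alpha^*\wt f(x^{\Frob_p})=\wt f(x)^p,\]
with the level-$N$ structure handled automatically since the cited statement is about points of $X_0(Np)$, not merely elliptic curves. You instead argue this directly from the uniqueness of the order-$p$ finite flat subgroup scheme in a supersingular fibre, which is precisely the content of the cited result; the price is that you must track the $\Gamma_0(N)$-structure through the Frobenius isogeny by hand (as you acknowledge). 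So your route is more self-contained, while the paper's is shorter and cleanly sidesteps the level-structure bookkeeping by quoting the $W_p=\Frob_p$ identification.
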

\begin{proof}
Since $f$ is integral over $\BZ[1/N,j]$ and $j(\tau), j(p\tau)$ are algebraic integers, we see $f(\tau),f(p\tau)$ are $p$-adic integers. Since $f$ is a regular function on $Y_0(N)_{/\BZ[1/N]}$, $f$ reduces to a regular function $\wt{f}$ on $Y_0(N)_{/\BF_p}$. 

It follows from \cite{DR73} (see also \cite[Proposition 3.8 (i)]{Ribet90}) that the Atkin-Lehner operator $W_p$ acts on the set of supersingular points on $X_0(Np)_{/\BF_p}$ as the Frobenius automorphism $\Frob_p$. For any supersingular point $x$ on $X_0(Np)_{/\BF_p}$, we have
\[\wt{f}(px)=\beta^*(\wt{f})(x)=W_p^*\circ \alpha^*(\wt{f})(x)=\alpha^*(\wt{f})(W_p(x))=\alpha^*(\wt{f})(x^{\Frob_p})=\left(\wt{f}(x)\right)^p.\]

Since $p$ is inert in $\BQ(\tau)$, the CM point arising from $\tau$ lies in the supersingular locus on $X_0(Np)_{/\BF_p}$ and the proposition follows. 

\end{proof}

\subsection{Nontriviality of Heegner points}
We apply the generalized Kronecker congruence Proposition \ref{prop: Kronecker} to some explicit modular functions and these congruences are crucial to investigate the  Heegner points under reduction modulo $p$.
\begin{prop}\label{cong}
\begin{itemize}
\item[1.] The function $f(z)=\eta(27z)/\eta(3z)$ is a modular function on $\Gamma_0(81)$ and $g(z)=(\eta(9z)/\eta(3z))^{4}$  is a modular function on $\Gamma_0(27)$.
\item[2.] Let $p$ be a prime number inert in $K=\BQ(\omega)$. Then the values $f(\omega/9)$, $g(\omega/9)$, $f(\omega p/9)$ and $g(\omega p/9)$ are all $p$-adic units and
\[f(\omega p/9)\equiv f(\omega/9)^p\mod p \textrm{ and }g(\omega p/9)\equiv g(\omega/9)^p\mod p.\]
\end{itemize}
\end{prop}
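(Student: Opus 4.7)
The plan is to treat the two parts in sequence. For part (1), I would apply the standard Ligozat-type criterion for when an eta quotient $\prod_\delta \eta(\delta z)^{r_\delta}$ is a modular function on $\Gamma_0(N)$ with trivial character. For $f$ with $(r_3,r_{27})=(-1,1)$ and $N=81$, the four conditions (weight zero, the two divisibility congruences mod $24$, and $\prod \delta^{r_\delta}=9$ a rational square) all check out, so $f$ is modular on $\Gamma_0(81)$; an analogous verification for $g$ with $(r_3,r_9)=(-4,4)$ and $N=27$ gives its modularity on $\Gamma_0(27)$. Both $q$-expansions at $\infty$ have leading term $q$ with integer coefficients, and $\eta$ is holomorphic and nonvanishing on $\CH$, so $f$ and $g$ define regular functions on the respective integral affine models $Y_0(N)_{/\BZ[1/N]}$, placing $f\in\CO(81)$ and $g\in\CO(27)$.

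For part (2), since $p$ is inert in $K=\BQ(\omega)$ we have $p\equiv 2\pmod 3$, hence $p$ is coprime to $27$ and $81$, so Proposition \ref{prop: Kronecker} applies with $\tau=\omega/9\in\CH\cap K$. The congruences $f(\omega p/9)\equiv f(\omega/9)^p$ and $g(\omega p/9)\equiv g(\omega/9)^p\pmod p$, together with the $p$-adic integrality of all four values, are then immediate from that proposition. The crux is to verify that $f(\omega/9)$ and $g(\omega/9)$ are themselves $p$-adic units: once this is done, their $p$-th powers are $p$-adic units, and by the congruences $f(\omega p/9)$ and $g(\omega p/9)$ are as well.

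To compute $f(\omega/9)^3$ explicitly, I would apply the transformation $\eta(-1/z)=\sqrt{-iz}\,\eta(z)$ to $z=-3\omega^2$, using $\omega/3=-1/(-3\omega^2)$ and $-3\omega^2=3+3\omega$, combined with $\eta(z+k)=e^{\pi ik/12}\eta(z)$. A careful phase computation (principal branches throughout) produces $\eta(\omega/3)=\sqrt{3}\,e^{i\pi/6}\eta(3\omega)$, and cubing $f(\omega/9)=\eta(3\omega)/\eta(\omega/3)$ yields $f(\omega/9)^3=-\sqrt{-3}/9$, a $p$-adic unit for $p\ne 3$. For $g(\omega/9)$, rather than redo a messier direct $\eta$-computation involving $\eta(\omega)$, I would use the identities $f^3=1/Y$ and $g=X/Y$, where $X,Y$ are the modular functions of Proposition \ref{mc} encoding the Weierstrass coordinates of $X_0(27)\cong E_1$. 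This gives $Y(\omega/9)=3\sqrt{-3}$, so the CM point $[\omega/9]\in E_1$ has $y$-coordinate $8Y+36=36+24\sqrt{-3}$; substituting into $y^2=x^3-432$ gives $X^3(\omega/9)=27\sqrt{-3}$, and hence $g(\omega/9)^3=X^3/Y^3=-1/3$, again a $p$-adic unit.

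The main obstacle I anticipate is the phase-tracking in the $\eta$-transformation for $\eta(\omega/3)$: the principal branch of $\sqrt{-iz}$ with $-iz=3i\omega^2$, the identification $-3\omega^2=3+3\omega$, and the shift formula $\eta(z+k)=e^{\pi ik/12}\eta(z)$ must be composed without sign errors, or the clean identity $\eta(\omega/3)=\sqrt{3}\,e^{i\pi/6}\eta(3\omega)$ (and everything downstream) would break. Routing $g(\omega/9)$ through the Weierstrass model of $E_1$ is a useful shortcut that sidesteps any direct manipulation of $\eta(\omega)$.
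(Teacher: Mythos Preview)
Your overall strategy is sound and the explicit computations are correct (indeed $f(\omega/9)^3=-\sqrt{-3}/9$ and $g(\omega/9)^3=-1/3$, consistent with the value $f(\omega/9)=e^{-\pi i/6}/\sqrt{3}$ that the paper itself records in the corollary immediately following). But your route for part~2 is genuinely different from the paper's. The paper never asserts $f\in\CO(81)$ directly; instead it invokes Lang's integrality results \cite[Ch.~12, Theorems~2 and~4]{Lang-ef} to show that $3f$ is integral over $\BZ[j]$ and that $3f(z_0)$ is an algebraic integer dividing $3$ for both $z_0=\omega/9$ and $z_0=\omega p/9$. This yields the $p$-adic unit property at \emph{both} CM points simultaneously, with no explicit special-value computation whatsoever. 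Proposition~\ref{prop: Kronecker} is then applied to $3f$ (not $f$), and the congruence for $f$ follows after cancelling the unit $3$. Your approach instead front-loads the explicit evaluations that the paper postpones to the next corollary; this is a valid reorganization and produces concrete numbers, but it trades the uniform Lang argument for a hand computation plus a bootstrap from $\omega/9$ to $\omega p/9$ via the congruence.

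One point to tighten: your claim that ``integer $q$-expansion at $\infty$ plus holomorphy on $\CH$'' places $f$ in $\CO(81)$ is essentially an appeal to the $q$-expansion principle for the smooth integral model $Y_0(N)_{/\BZ[1/N]}$, and deserves a word of justification or a citation rather than being asserted. Alternatively, since $N=81,27$ are powers of $3$ and $3$ is already a unit in $\BZ[1/N]$, you can simply deduce $f\in\CO(81)$ and $g\in\CO(27)$ from Lang's statement that $3f$ (resp.\ a power of $3$ times $g$) is integral over $\BZ[j]$, which is exactly the integrality input the paper uses.
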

\begin{proof}
By Ligozat's criterion \cite[Proposition 3.2.1]{Ligozat1970}, $f$ is a modular function on $\Gamma_0(81)$ and $g$ is a modular function on $\Gamma_0(27)$. 

By \cite[Chapter 12, Theorem 2]{Lang-ef}, 
\[(\det \alpha )^{12} f^{24}=(\det \alpha )^{12} \frac{\Delta(\alpha(3z))}{\Delta(3z)},\quad \alpha=\matrixx{9}{0}{0}{1},\]
is integral over $\BZ[j_3]$. Hence $3f$ is integral over $\BZ[j_3]$ and hence integral over $\BZ[j]$. Taking $z_0=\omega/9$ and $\omega p/9$ respectively, by \cite[Chapter 12, Theorem 4]{Lang-ef}, we see that $3 f(z_0)$ is an algebraic integer dividing $3$. In particular, both $f(\omega/9)$ and $f(\omega p/9)$ are $p$-adic units. Then apply Proposition \ref{prop: Kronecker} to $3f$ we get
 \[f(\omega p/9)\equiv f(\omega/9)^p\mod p.\]
 Viewing $g$ as a modular function on $\Gamma_0(81)$, the congruence relation for $g$ follows similarly.
\end{proof}

Recall from Proposition \ref{mc} that the elliptic curve $E_1:y^2=x^3-432$ has modular parametrization
by the formulae
\[x(z)=4\frac{\eta(9z)^4}{\eta(3z)\eta(27z)^3}=4gf^{-3}\]
and
\[y(z)=8\frac{\eta(3z)}{\eta(27z)}+36=8f^{-3}+36.\]

\begin{coro}
The coordinates $x(\omega p/9)$ and $y(\omega p/9)$ are $p$-adic integers and satisfy the congruence relations
\[x(\omega p/9)\equiv -4\cdot 3^{7/6}e^{5\pi i/6}\mod p\]
and
\[y (\omega p/9)\equiv -8\cdot 3\sqrt{-3}+36\mod p.\]
\end{coro}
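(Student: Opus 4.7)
The plan is to combine the congruences of Proposition~\ref{cong} with explicit $\eta$-function transformations at the CM point $\omega/9$.

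First, since Proposition~\ref{cong} guarantees that $f(\omega p/9)$ and $g(\omega p/9)$ are $p$-adic units, the formulae $x=4gf^{-3}$ and $y=8f^{-3}+36$ from Proposition~\ref{mc} make $x(\omega p/9)$ and $y(\omega p/9)$ into $p$-adic integers. Substituting the congruences $f(\omega p/9)\equiv f(\omega/9)^p$ and $g(\omega p/9)\equiv g(\omega/9)^p \mod p$, and combining with Fermat's little theorem ($4^p\equiv 4$, $8^p\equiv 8$, $36^p\equiv 36\mod p$) together with the freshman's dream $(a+b)^p\equiv a^p+b^p\mod p$, I obtain
\[
x(\omega p/9)\equiv x(\omega/9)^p\mod p,\qquad y(\omega p/9)\equiv y(\omega/9)^p\mod p.
\]

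Second, I would evaluate $f(\omega/9)=\eta(3\omega)/\eta(\omega/3)$ via the standard $\eta$-transformations. Applying $\eta(-1/z)=\sqrt{z/i}\,\eta(z)$ at $z=\omega/3$ (using $-3/\omega=3(1+\omega)$) together with $\eta(z+1)=e^{\pi i/12}\eta(z)$ yields
\[
e^{\pi i/4}\eta(3\omega)=\eta(3+3\omega)=\sqrt{\omega/(3i)}\,\eta(\omega/3),
\]
whence $f(\omega/9)=3^{-1/2}e^{-i\pi/6}$ and $f(\omega/9)^{-3}=3\sqrt{-3}$. Therefore $y(\omega/9)=24\sqrt{-3}+36$. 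Since $p$ is inert in $K=\BQ(\sqrt{-3})$, the Frobenius on the residue field sends $\sqrt{-3}\mapsto-\sqrt{-3}$, so $y(\omega/9)^p\equiv -24\sqrt{-3}+36\mod p$, which is the $y$-congruence in the corollary.

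Third, for $x$: substituting $y(\omega/9)=24\sqrt{-3}+36$ into the Weierstrass equation $y^2=x^3-432$ gives $x(\omega/9)^3=1728\sqrt{-3}$, so $x(\omega p/9)^3\equiv -1728\sqrt{-3}\mod p$. A direct calculation shows $(-4\cdot 3^{7/6}e^{5\pi i/6})^3=-1728\sqrt{-3}$, so the claimed congruence holds up to a cube root of unity. To pin down the exact root, I would compute $g(\omega/9)=(\eta(\omega)/\eta(\omega/3))^4$ via a further $\eta$-identity (for instance a triplication formula relating $\eta(\omega)$ to $\eta((\omega+k)/3)$ for $k=0,1,2$, or the explicit Chowla--Selberg value of $\eta(\omega)$), and then track the Frobenius action carefully, using $p\equiv 2\mod 3$ to control cube roots of unity and the $p$-th-power map on sixth roots of $3$ in $\overline{\BF}_p$.

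The main obstacle is the cube-root disambiguation in this final step: although $x(\omega p/9)^3$ and $(-4\cdot 3^{7/6}e^{5\pi i/6})^3$ coincide modulo $p$ on the nose, ruling out the two nontrivial cube-root-of-unity multiples requires meticulous branch-tracking of the complex value of $g(\omega/9)$ and of the Frobenius on the mixed 6th-root-of-$3$ and 12th-root-of-unity expression.
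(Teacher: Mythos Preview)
Your approach matches the paper's in structure. The paper likewise computes $f(\omega/9)=3^{-1/2}e^{-\pi i/6}$ (citing \cite[Lemma~5.2.4]{DV17}), obtains $Y(\omega/9):=f(\omega/9)^{-3}=3\sqrt{-3}$, and applies Frobenius on $\sqrt{-3}$ to get the $y$-congruence exactly as you do.

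For the $x$-coordinate, the paper circumvents your ``main obstacle'' with a squaring trick rather than by computing $g(\omega/9)$ directly. Setting $X=gf^{-3}$, one has $X(\omega/9)^2=3^{7/3}e^{\pi i/3}\in K(\sqrt[3]{3})$; since $p\equiv 2\bmod 3$ forces $\sqrt[3]{3}\in\BF_p$, raising to the $p$-th power here reduces to complex conjugation on $K$, giving $X(\omega p/9)^2\equiv -3^{7/3}e^{2\pi i/3}\bmod p$ with no sixth-root bookkeeping. Independently, the Weierstrass equation $Y^2+9Y=X^3-27$ together with the already-determined $Y(\omega p/9)\equiv -3\sqrt{-3}$ yields $X(\omega p/9)^3$. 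Dividing, $X(\omega p/9)=X^3/X^2$ is then determined, and $x=4X$.

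This bypasses the twelfth-root-of-unity tracking you anticipate. It does, however, still rely on the specific identification $X(\omega/9)=3^{7/6}e^{\pi i/6}$ (rather than one of its $\omega$-multiples), which the paper simply asserts after obtaining $X(\omega/9)^3=27\sqrt{-3}$ from the curve equation. So your instinct that the cube-root selection is the crux is well founded; your proposed route via an explicit $\eta$-evaluation of $g(\omega/9)$ would supply exactly the justification the paper leaves implicit.
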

\begin{proof}
By Proposition \ref{cong}, we know that all the coordinates $f(z_0)$ and $g(z_0)$, with $z_0 =\omega/9$ or $\omega p/9$, are $p$-adic units, and the congruence relations
$$gf^{-3}(\omega p/9)\equiv \left(gf^{-3}(\omega/9)\right)^p\mod p,\quad f^{-3}(\omega p/9)\equiv \left(f^{-3}(\omega/9)\right)^p\mod p.$$

Put $X=gf^{-3}$ and $Y=f^{-3}$. In terms of coordinates in $X$ and $Y$,  the equation of the elliptic curve $E_1$  is given by $Y^2+9Y=X^3-27$. It remains to compute the values $ X(\omega/9)=gf^{-3}(\omega/9)$ and $Y(\omega/9)=f^{-3}(\omega/9)$ explicitly. By the proof of \cite[Lemma 5.2.4]{DV17}, we see that
\[f(\omega/9)=e^{-\pi i/6}\frac{1}{\sqrt{3}},\]
and hence $Y(\omega /9)=3\sqrt{-3}$, and then we find $X(\omega/9)=3^{7/6}e^{\pi i/6}$.

Since $(-3)^{\frac{p-1}{2}}=\left(\frac{-3}{p}\right)=-1\mod p$,
\[Y(\omega p/9)\equiv (3\sqrt{-3})^p\equiv 3(-3)^{\frac{p-1}{2}}\sqrt{-3}\equiv -3\sqrt{-3}\mod p.\]
We concludes from the Weierstrass euqation $E_1: Y^2+9Y=X^3-27$  and
\[X(\omega p/9)^2\equiv 3^{7p/3}e^{p\pi i/3}\equiv -3^{7/3}e^{2\pi i/3}\mod p\]
that
\[X(\omega p/9)\equiv -3^{7/6}e^{5\pi i/6}\mod p.\]
Then the congruence relations for $x(\omega p/9)$ and $y(\omega p/9)$ follow.
\end{proof}

Before we prove the nontriviality of Heegner points, we need to analyze the behavior of points over finite fields. Consider the elliptic curve $E_9: y^2=x^3-2^4\cdot 3^7$ and let $\varphi:E_1\ra E_9$ be the isomorphism given by $$(x,y)\mapsto(3^{4/3}x,9y).$$
Note that $P_0=(36,108)$ is an infinite-order point in $E_9(\BQ)$ and $E_9(\BQ)=\BZ \cdot P_0$. Since $p \equiv 2,5\mod 9$ is inert in $K$, $E_9$ has good supersingular reduction modulo $p$. Then $E_9(\BF_p)$ has cardinality $p+1$, and its 3-primary component is isomorphic to $\BZ/3\BZ$.

Fix an embedding of $\ov{\BQ}$ the field of algebraic numbers into the complex number field $\BC$.
\begin{prop}\label{finite}
\begin{itemize}
\item[1.] Let $\zeta_9=e^{2\pi i/9}$ be a primitive $9$-th root of unity and let $A\in E_9(\ov{\BQ})$ such that $[3]A=P_0$. Then $\BQ(\zeta_9)\subset \BQ(A,E_9[3])$.
\item[2.] The point $P_0$ is not divisible by $3$ in $E_9(K_p)$. 
\item[3.] The reduction of $P_0$ in $E_9(\BF_p)$ has non-trivial 3-component, i.e. the reduction of $[(p+1)/3] P_0$ is nonzero.
\end{itemize}
\end{prop}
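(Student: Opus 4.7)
My plan is to prove the three parts sequentially, with (2) and (3) reducing to (1) via local arguments; the core content lies in (1), which I will approach by explicit Kummer theory for $E_9$ over $F_0 := \BQ(E_9[3])$.

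For part (1), I first read off $E_9[3]$ from the $3$-division polynomial $\psi_3(x) = 3x(x^3 - 2^6 \cdot 3^7)$ of $E_9 : y^2 = x^3 - 2^4 \cdot 3^7$: it consists of $O$, $(0, \pm 108\sqrt{-3})$, and $(36\omega^i \sqrt[3]{3}, \pm 324)$ for $i = 0, 1, 2$, so $F_0 = K(\sqrt[3]{3})$, which already contains $\mu_3$. Cubic extensions of $F_0$ are therefore classified by $F_0^\times/(F_0^\times)^3$ via Kummer theory. By the standard identification of the Kummer descent map with the Tate pairing, the class of $P_0 \in E_9(F_0)$ modulo $3 E_9(F_0)$, paired against $T := (0, 108\sqrt{-3}) \in E_9[3]$, is given by $f_T(P_0)/c_0 \bmod (F_0^\times)^3$, where $f_T(x,y) := y - 108\sqrt{-3}$ has divisor $3(T) - 3(O)$ and $c_0 = -1$ is its leading coefficient at $O$ in the uniformizer $u = -x/y$ (since $y \sim -1/u^3$ at $O$). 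A direct computation gives $f_T(P_0) = 108(1 - \sqrt{-3}) = -216\omega$ (using $\sqrt{-3} = 1 + 2\omega$), so the class equals $216\omega = 6^3\omega \equiv \omega$ modulo cubes. Now $\BQ(\zeta_9)/\BQ$ is cyclic of degree $6$ while $F_0/\BQ$ has non-abelian Galois group $S_3$, so $\zeta_9 \notin F_0$ and hence $\omega \notin (F_0^\times)^3$. Therefore the nontrivial cubic Kummer subextension of $F_0(A)/F_0$ cut out by this class is $F_0(\sqrt[3]{\omega}) = F_0(\zeta_9)$, placing $\zeta_9$ inside $\BQ(A, E_9[3])$.

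For part (2), argue by contradiction: suppose $P_0 = 3Q$ for some $Q \in E_9(K_p)$. First, $F_0 \subset K_p$, since $p \equiv 2 \bmod 3$ gives $3^{(p^2-1)/3} = (3^{p-1})^{(p+1)/3} = 1$ in $\BF_p^\times$, so $3$ is a cube in $\BF_{p^2}^\times$ and Hensel lifts to $\sqrt[3]{3} \in K_p$. Taking $A = Q$ yields $\BQ(A, E_9[3]) \subset K_p$, so part (1) forces $\zeta_9 \in K_p$. But $K_p$ only contains $\mu_{p^2-1}$, and for $p \equiv 2, 5 \bmod 9$ one computes $v_3(p-1) + v_3(p+1) = 0 + 1 = 1 < 2$, so $9 \nmid p^2 - 1$ and $\zeta_9 \notin K_p$: contradiction. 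For part (3), again by contradiction: if the $3$-component of $\overline{P_0} \in E_9(\BF_p)$ is trivial, i.e.\ $\overline{P_0} \in 3 E_9(\BF_p)$, choose $\overline{Q} \in E_9(\BF_p)$ with $3\overline{Q} = \overline{P_0}$ and lift via the surjective good-reduction map to $Q_0 \in E_9(\BQ_p)$. Then $P_0 - 3Q_0$ lies in the kernel-of-reduction formal group, on which $[3]$ is a bijection (since $3 \in \BZ_p^\times$ as $p \neq 3$), so $P_0 - 3Q_0 = 3R$ for some $R$ in the formal group, giving $P_0 \in 3E_9(K_p)$ and contradicting (2).

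The main obstacle will be part (1): carefully invoking the Kummer--Tate pairing with the correct normalization at $O$, and verifying $\omega \notin (F_0^\times)^3$ via the Galois-theoretic distinction between $F_0 = K(\sqrt[3]{3})$ and $\BQ(\zeta_9)$, so that the computed class genuinely cuts out $F_0(\zeta_9) \subset F_0(A)$.
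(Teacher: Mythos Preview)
Your proof is correct. Parts (2) and (3) follow the same route as the paper: show $E_9[3]\subset E_9(K_p)$ via $\sqrt[3]{3}\in\BQ_p$, deduce $\BQ(A,E_9[3])\subset K_p$ and get a contradiction from $\zeta_9\notin K_p$ since $9\nmid p^2-1$; then pass from $E_9(K_p)/3$ to $E_9(\BF_p)/3$ using that $[3]$ is bijective on the formal group for $p\neq 3$.

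For part (1) your argument is genuinely different from the paper's. The paper writes down the degree-$9$ polynomial satisfied by $x(A)$ from the triplication formula, forms the symmetric quantity $t=x_0+x_1+x_2$ where $x_1,x_2$ are the $x$-coordinates of $A\pm T$, checks that $t=(x_0^3-2^63^7)/x_0^2$ satisfies an explicit cubic $g(t)=t^3-2^23^4t^2+2^63^{10}$, and observes that the roots of $g$ are $108(\zeta_9^k+\zeta_9^{-k}+1)$; since $\BQ(A,E_9[3])$ is Galois over $\BQ$ and contains $\sqrt{-3}$, it then contains $\BQ(\zeta_9)$. Your approach instead identifies the Kummer class of $P_0$ against $T=(0,108\sqrt{-3})$ via the function $f_T=y-108\sqrt{-3}$ with divisor $3(T)-3(O)$: from the factorisation $(y-108\sqrt{-3})(y+108\sqrt{-3})=x^3$ one sees $f_T(P_0)=108(1-\sqrt{-3})=-6^3\omega\equiv\omega$ in $F_0^\times/(F_0^\times)^3$, and since $\zeta_9\notin F_0=K(\sqrt[3]{3})$ (the Galois groups $S_3$ versus $\BZ/6\BZ$ are not isomorphic), the class is nontrivial and cuts out $F_0(\zeta_9)\subset F_0(A)$. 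Your route is more conceptual, avoids the triplication polynomial entirely, and would adapt to other base points; the paper's route is more elementary in that it needs no descent formalism, only the addition law and Cardano. Both establish exactly the same inclusion.

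One small remark: your normalization constant $c_0=-1$ is a cube, so dividing by it is harmless but also unnecessary for the argument; the heart of the verification is that $f_T$ genuinely represents the Kummer pairing, which follows (for this curve) from the identity $\prod_i(y_i-108\sqrt{-3})=(\mu-108\sqrt{-3})^3$ for collinear points on $y=\lambda x+\mu$, making $P\mapsto f_T(P)$ a homomorphism modulo cubes.
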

\begin{proof}
Let $A=(x_0,y_0)\in E_9(\ov{\BQ})$ and $[3]A=P_0$. By the triplication formula \cite[page 105]{Silvermanbook1}, we have
\[x(P_0)=x([3]A)=\frac{x_0^9+2^93^8x_0^6+2^{12}3^{15}x_0^3-2^{18}3^{21}}{(3x_0(x_0^3-2^63^7))^2}=36.\]
The $x_0$ is a root of a polynomial $f(x)$ of degree $9$, where explicitly
\[f(x)=x^9 - 2^2 3^4x^8 + 2^9  3^8x^6 + 2^9  3^{11}x^5 + 2^{12} 3^{15}x^3 - 2^{14} 3^{18}x^2 - 2^{18} 
3^{21}.\]

Suppose
\[A+(0,2^23^3\sqrt{-3})=(x_1,y_1)\textrm{ and }A+(0,-2^23^3\sqrt{-3})=(x_2,y_2).\]
Then
\[x_0+x_1=\left(\frac{y_0-2^23^3\sqrt{-3}}{x_0}\right)^2\]
and 
\[x_0+x_2=\left(\frac{y_0+2^23^3\sqrt{-3}}{x_0}\right)^2.\]
It follows that
\[t=x_0+x_1+x_2=\frac{x_0^3-2^63^7}{x_0^2}.\]
It is straightforward to verify that $t$ satisfies a polynomial of degree $3$
\[g(t)=t^3-2^23^4t^2+2^63^{10}=f(x_0)/x_0^6=0.\]
By the cubic root formula, we see that $g(t)$ has a root
\[t=108(\zeta_9+\zeta_9^{-1}+1).\]
It is easy to see that $K=\BQ(\sqrt{-3})\subset \BQ(A,E_9[3])$. Since $\BQ(A,E_9[3])$ is Galois over $\BQ$, we see
\[\BQ(\zeta_9)=\BQ(\zeta_9+\zeta_9^{-1},\sqrt{-3})\subset \BQ(A,E_9[3]).\]
The first assertion follows.

For the second assertion, we assume the contrary, i.e. there exists some $A\in E_9(K_p)$ with $[3]A=P_0$. Note $E_9[3]\subset E(K_p)$. Indeed, we have
\[E_9[3]=\{O,(0,\pm 2^23^3\sqrt{-3}),(2^23^2\sqrt[3]{3}\omega^i,\pm 2^23^4)\}_{i=0,1,2},\]
and $\sqrt[3]{3}\in K_p$ because $p\equiv 2\mod 3$. Then
\[\BQ(A,E_9[3])\subset K_p,\]
and it follows that $p$ is split in the field extension $\BQ(A,E_9[3])/K$.

On the other hand, since $p\equiv 2,5\mod 9$, $p$ is a primitive root modulo $9$. Then the Frobenius automorphism $\Frob_p$ is the generator of the Galois group 
\[\Gal(\BQ(\zeta_9)/\BQ)\simeq (\BZ/9\BZ)^\times.\]
Then $p$ is inert in the field extension $\BQ(\zeta_9)/\BQ$, and in particular, $p$ is inert in the field extension $\BQ(\zeta_9)/K$. Then a contradiction follows by the first statement that $\BQ(\zeta_9)\subset \BQ(A,E_9[3])$, and hence $P_0$ is not divisible by $3$ in $E_9(K_p)$.

For the third assertion, consider the following commutative diagram
\[\xymatrix{0\ar[r]&\wh{E_9}(p\BZ_p)\ar[d]^{[3]}\ar[r]&E_9(\BQ_p)\ar[r]\ar[d]^{[3]}&E_9(\BF_p)\ar[d]^{[3]}\ar[r]&0\\
0\ar[r]&\wh{E_9}(p\BZ_p)\ar[r]&E_9(\BQ_p)\ar[r]&E_9(\BF_p)\ar[r]&0,}\]
where $\wh{E_9}$ denotes the formal group associated to $E_9$.
Since $[3]$ is an isomorphism on $\wh{E_9(p\BZ_p)}$, we conclude that the reduction modulo $p$ induces an isomorphism
\[E_9(\BQ_p)/3E_9(\BQ_p)\simeq E_9(\BF_p)/3E_9(\BF_p)\simeq \BZ/3\BZ.\]
By the second assertion, we see that $P_0$ is not divisible by $3$ in $E_9(\BQ_p)$, and hence it gives rise to a nonzero element in $E_9(\BF_p)/3E_9(\BF_p)$ under reduction modulo $p$. The third assertion follows.
\end{proof}
Recall that $\tau=\omega p/9$ and $P_\tau=[\tau,1]\in X_0(27)(H_{9p})$ and put
\[z=\Tr_{H_{9p}/L_{(3,p)}}P_\tau.\]
Note that $H_{3p}/K$ is totally ramified at $p$ and $H_{9p}/H_{3p}$ is split at the places above $p$. Let $\fP$ be a place of $L_{(3,p)}$ above $p$.
\begin{thm}\label{thm:1}
\begin{itemize}
\item[1.] The point $z\mod \fP$ is primitive of order $3$ in $E_1(\BF_{p^2})$, i.e. not killed by $[\sqrt{-3}]$.
\item[2.] The point $z\in E_1(L_{(3,p)})$ is of infinite order.
\end{itemize}
\end{thm}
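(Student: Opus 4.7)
The plan is to prove~(1) by computing $\bar z$ in $E_1(\BF_{p^2})$ explicitly via the Kronecker congruence of Proposition~\ref{cong}, reducing the nontriviality to Proposition~\ref{finite}(3), and then deduce~(2) from~(1) by a Galois-permutation argument.

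Combining the ramification data in Proposition~\ref{LCF}, the extension $H_{9p}/L_{(3,p)}$ is totally ramified of degree $(p+1)/3$ at each of the three places of $L_{(3,p)}$ above $p$: indeed $H_{3p}/L_{(p)}$ is totally ramified at $p$, $L_{(3,p)}/L_{(p)}$ and $H_{9p}/H_{3p)}$ are both split at $p$, and $[H_{9p}:L_{(3,p)}]=(p+1)/3$. Let $\fP'$ be the unique place of $H_{9p}$ above $\fP$. Then $\Gal(H_{9p}/L_{(3,p)})$ coincides with the inertia at $\fP'$ and acts trivially on the residue field $\BF_{p^2}$, so all $(p+1)/3$ translates $P_\tau^\gamma$ reduce to a common point $\overline{P_\tau}$ at $\fP'$, giving
\[
\bar z \equiv [(p+1)/3]\,\overline{P_\tau}\pmod{\fP}\quad\text{in }E_1(\BF_{p^2}).
\]
Applying the isomorphism $\varphi\colon E_1\to E_9$, $(x,y)\mapsto(3^{4/3}x,9y)$, to the explicit congruences of the preceding Corollary and simplifying $\sqrt{3}\,e^{5\pi i/6}=(-3+\sqrt{-3})/2$ yields
\[
\varphi(\overline{P_\tau}) = (54-18\sqrt{-3},\ 324-216\sqrt{-3})\in E_9(\BF_{p^2}).
\]

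Since $p\equiv 2\pmod 3$, Frobenius acts by $\Frob_p(\sqrt{-3})=-\sqrt{-3}$, so $\Frob_p\varphi(\overline{P_\tau})=(54+18\sqrt{-3},\ 324+216\sqrt{-3})$, and a direct chord computation (slope $\lambda=12$) yields the identity
\[
\varphi(\overline{P_\tau})+\Frob_p\varphi(\overline{P_\tau}) = (36,-108) = -\bar P_0\in E_9(\BF_p).
\]
Multiplying by $[(p+1)/3]$, the $\Frob_p$-trace of $\varphi(\bar z)$ equals $-[(p+1)/3]\bar P_0$, which is nonzero by Proposition~\ref{finite}(3). For CM curves at an inert supersingular prime one has $\Frob_p\alpha=\bar\alpha\Frob_p$ for all $\alpha\in\CO_K$, so $E_9[\sqrt{-3}]$ is the $(-1)$-eigenspace of $\Frob_p$ on $E_9[3]$, giving a decomposition $E_9[3]=E_9(\BF_p)[3]\oplus E_9[\sqrt{-3}]$. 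The nonvanishing of the $\Frob_p$-trace of $\varphi(\bar z)$ then forces $\varphi(\bar z)\notin E_9[\sqrt{-3}]$, hence $\bar z\notin E_1[\sqrt{-3}]$; combined with $3\bar z=[p+1]\overline{P_\tau}=0$ in the $(p+1)$-torsion group $E_1(\BF_{p^2})$, this completes~(1).

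For~(2), I show the three reductions of $z$ at the three places of $L_{(3,p)}$ above $p$ are pairwise distinct; since $E_1(L_{(3,p)})_\tor=E_1[3]=E_1(K)$ by Proposition~\ref{torsion}, this forces $z\notin E_1(K)$, so $z$ has infinite order. By Corollary~\ref{galois}, $P_\tau^\sigma=[\omega^2]P_\tau$ where $\sigma=\sigma_{1+3\omega_3}$ generates $\Gal(H_{9p}/H_{3p})$. Because $H_{9p}/H_{3p}$ is split at $p$, $\langle\sigma\rangle$ meets the inertia at any place of $H_{9p}$ above $p$ trivially, so $\sigma$ cyclically permutes the three such places $\fP_i'=\sigma^i\fP_0'$ and acts trivially on their residue fields. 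Combining $\overline{P_\tau^\sigma}=[\omega^2]\overline{P_\tau}$ with the Galois functoriality $\overline{P_\tau^\sigma}^{(\fP_i')}=\overline{P_\tau}^{(\sigma^{-1}\fP_i')}$ yields $\overline{P_\tau}^{(\fP_i')}=[\omega^i]A$ for some $A\in E_1(\BF_{p^2})$, whence $\bar z^{(\fP_i)}=[\omega^i]\bar z^{(\fP_0)}$. By~(1), $\bar z^{(\fP_0)}\notin E_1[\sqrt{-3}]=E_1^{[\omega]}$, so these three points are pairwise distinct, as claimed.

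The principal obstacle is the identity $\varphi(\overline{P_\tau})+\Frob_p\varphi(\overline{P_\tau})=-\bar P_0$, which is the arithmetic bridge linking the CM-Heegner construction to the explicit generator $P_0\in E_9(\BQ)$; once it is secured by the explicit chord calculation above, both the CM-decomposition for~(1) and the Galois-permutation for~(2) are essentially formal.
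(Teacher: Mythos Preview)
Your proof is correct, and for part~(1) you take a genuinely different route from the paper. The paper shifts $P$ by a $[\sqrt{-3}]$-torsion point and a twist by $[\omega^2]$ to produce a point $P'=[\omega^2]P+(0,-12\sqrt{-3})$ with $\varphi(P')=P_0$ \emph{exactly}; then $[(p+1)/3]P_\tau$ and $[(p+1)/3]P'$ differ only by $[\omega^2]$ and a $[\sqrt{-3}]$-torsion translation, so primitivity of the latter (from Proposition~\ref{finite}(3)) transfers to the former. Your approach instead computes the Frobenius trace $\varphi(\overline{P_\tau})+\Frob_p\varphi(\overline{P_\tau})=-\bar P_0$ directly and invokes the eigenspace decomposition $E_9[3]=E_9(\BF_p)[3]\oplus E_9[\sqrt{-3}]$ under $\Frob_p$. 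Both arguments land on Proposition~\ref{finite}(3) for the nontriviality input; your version is a bit more conceptual (the $\Frob_p$-trace isolates the $E_9(\BF_p)[3]$-component cleanly), while the paper's version is a shorter coordinate manipulation.

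For part~(2), your argument via the three distinct reductions at the places $\fP_i$ is correct but unnecessarily elaborate. The paper's argument is a one-liner: if $z$ were torsion then $z\in E_1(K)$ by Proposition~\ref{torsion}, so $z^{\sigma_{1+3\omega_3}}=z$; but also $z^{\sigma_{1+3\omega_3}}=[\omega^2]z$ by Corollary~\ref{galois}, forcing $[1-\omega^2]z=0$, i.e.\ $[\sqrt{-3}]z=0$, contradicting~(1). Your three-place comparison ultimately encodes the same identity $[\omega]\bar z=\bar z$, but routed through the residue fields; the paper avoids this detour entirely by working directly in $E_1(K)$.
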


\begin{proof}
Note that $$P=(-4\cdot 3^{7/6}e^{5\pi i/6},-8\cdot 3\sqrt{-3}+36)\in E_1(\ov{\BQ}),$$
and then
\[P\equiv  P_\tau\mod p.\]
Put
\[P'=[\omega^2]P+(0,-12\sqrt{-3})=(4\cdot 3^{2/3},12),\]
and let $\varphi:E_1\ra E_9$ be the isomorphism given by $$(x,y)\mapsto(3^{4/3}x,9y).$$
Then  $\varphi(P')=(36,108)$ is the generator of $E_9(\BQ)$.   Note
\[E_9[\sqrt{-3}]=\{O, (0,\pm 12\sqrt{-3})\}\subset E_9(\BF_{p^2}).\] 
By Proposition \ref{finite}, we see that $[(p+1)/3]\varphi(P')$ is a nonzero point of order $3$ in $E_9(\BF_p)$ which is not killed by $[\sqrt{-3}]$.

Then we know that $[(p+1)/3]P'\mod \fP$ is primitive of order $3$, and hence
\[z\equiv  \left[\frac{p+1}{3}\right] P_\tau \mod \fP\in E_1(\BF_{p^2})[3]\]
is primitive of order $3$.

For the second assertion. If $z$ is torsion, then by Proposition \ref{torsion},   $z\in E_1(K)$ and  hence
\[z^{\sigma_{1+\omega_3}}=z=[\omega^2]z.\]
Then $z$ is killed by $[\sqrt{-3}]$ and so is the reduction of $z\mod \fP$, which is a contradiction.
\end{proof}

Let $E$ be an elliptic curve over $\BQ$ which has CM by $\CO_K$ over $\ov{\BQ}$. Let $L/K$ be a cubic field extension and $\chi:\Gal(L/K)\ra \CO_K^\times$ a nontrivial cubic character.    For any abelian group $M$, denote $M_\BQ=M\otimes_\BZ\BQ$. Put
\[E(L)^\chi=\{P\in E(L):\,P^\sigma =[\chi(\sigma)]P,\forall \sigma\in \Gal(L/K).\}\]

\begin{lem}\label{lem:2}
 Let the notations be as above. Then we have
$$E(L)_\BQ=E(K)_\BQ\oplus E(L)_\BQ^\chi\oplus E(L)_\BQ^{\chi^{-1}}.$$
\end{lem}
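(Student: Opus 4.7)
The plan is to exhibit the decomposition as an eigenspace decomposition under the commuting actions of $\Gal(L/K)$ and the CM ring $\CO_K$.

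First I would set $G = \Gal(L/K) \simeq \BZ/3\BZ$ with generator $\sigma_0$, and observe that because $E$ has CM by $\CO_K$ with the CM defined over $K$, the endomorphisms in $\CO_K$ commute with the Galois action of $\Gal(\overline{\BQ}/K)$ on $E(\overline{\BQ})$. Therefore $E(L)_\BQ$ is naturally a module over the ring $K \otimes_\BQ \BQ[G] = K[G]$, where $K$ acts via CM and $G$ acts as Galois. Since $K$ contains $\omega$, the values of $\chi$ lie in $K$, and $K[G] = K[\sigma_0]/(\sigma_0^3 - 1)$ splits as a product of three copies of $K$ indexed by the three characters $1, \chi, \chi^{-1}$.

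Next I would introduce the corresponding orthogonal idempotents in $K[G]$,
\[
e_\psi = \frac{1}{3}\sum_{\sigma \in G} [\psi(\sigma)^{-1}]\,\sigma, \qquad \psi \in \{1,\chi,\chi^{-1}\},
\]
where $[\psi(\sigma)^{-1}]$ denotes the CM endomorphism. A direct computation using a change of variable $\sigma' = \sigma_0 \sigma$ shows $\sigma_0 \cdot e_\psi P = [\psi(\sigma_0)]\, e_\psi P$, so $e_\psi E(L)_\BQ \subset E(L)_\BQ^\psi$; conversely, if $P \in E(L)_\BQ^\psi$ then $e_\psi P = P$. This gives the three eigenspace decomposition
\[
E(L)_\BQ = e_1 E(L)_\BQ \oplus e_\chi E(L)_\BQ \oplus e_{\chi^{-1}} E(L)_\BQ = E(L)_\BQ^{G} \oplus E(L)_\BQ^\chi \oplus E(L)_\BQ^{\chi^{-1}}.
\]

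Finally, since $L/K$ is Galois, $E(L)_\BQ^G = E(K)_\BQ$, yielding the claimed decomposition. The only delicate point — the one I would be most careful about — is the compatibility of the CM and Galois actions: this is what allows the idempotents with CM coefficients $[\psi(\sigma)^{-1}]$ to act on $E(L)_\BQ$ and commute with the Galois action. Everything else is a formal consequence of the semisimplicity of $K[G]$ after inverting $3$, which is harmless as we have already tensored with $\BQ$.
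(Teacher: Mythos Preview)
Your proof is correct and is essentially the same as the paper's: both construct the idempotents $e_\psi=\frac{1}{3}\sum_{\sigma\in G}[\psi(\sigma)^{-1}]\sigma$ (the paper calls them $\alpha_k$) and use that the CM action commutes with $\Gal(L/K)$. The only difference is packaging---you invoke the semisimplicity of $K[G]$ to get the decomposition at once, whereas the paper verifies spanning and directness by an explicit Vandermonde-type computation; the underlying argument is identical.
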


\begin{proof}
For any $\alpha=P\otimes 1\in E(L)_\BQ$, define for $k=0,1,2$,
\begin{eqnarray*}
\alpha_k=\frac{1}{3}\left (\sum_{\sigma\in G}[\chi^{-k}(\sigma)]P^\sigma\otimes 1\right )\in E(L)_\BQ^{\chi^{k}}.
\end{eqnarray*}
Then
$P\otimes 1=\sum_{k} \alpha_k$ and hence $E(L)_\BQ=E(K)_\BQ+E(L)_\BQ^{\chi}+E(L)_\BQ^{\chi^{2}}$.
\par Let $\alpha_i\in E(L)^{\chi^i}_\BQ$, $i=0,1,2$, satisfy $\alpha_0+\alpha_1+\alpha_2=0$. We will prove that $\alpha_i=0$, for all $i$. By multiplying a sufficiently large integer, we may assume that $\alpha_i=P_i\otimes 1$, $P_i\in E(L)$. Let $\tau$ be a generator of $G$. Then we have for $k=0,1,2$,
\[A_k=\left (\sum_{i=0}^2P_i\otimes 1\right )^{\tau^k}=\sum_{i=0}^2 [\chi^i(\tau^k)]P_i\otimes 1=0.\]
Then
\[3 P_0\otimes 1=\sum_{k=0}^2 A_k=0 .\]
Then $P_1\otimes 1+P_2\otimes 1=0$. Hence, $P_1\otimes 1\in E(L)_\BQ^\chi\cap E(L)_\BQ^{\chi^2}$ and then
\[[\chi(\sigma)]P_1\otimes 1=[\chi^2(\sigma)]P_1\otimes 1, \forall \sigma\in G.\]
In particular, $[\omega]P_1\otimes 1=[\omega^2]P_1\otimes 1$. Thus $3P_1\otimes 1=0$ and then for all $i$, $\alpha_i=0$.

\end{proof}

Now we are at the stage to prove the main theorem.
\begin{proof}[Proof of the Main Theorem \ref{main}]
The elliptic curve $E_3$ has the Weierstrass equation $y^2=x^3-2^4\cdot 3^5$ and let $\phi: E_1\ra E_3$ be the isomorphism given by
\[\phi(x,y)=(3^{2/3}x,3y).\]
Then $\phi:E_1(L_{(3,p)})^{\sigma_{1+3\omega_3}=\omega^2}\simeq E_3(L_{(p)})$ is an isomorphism. Note $E_3(K)_\BQ=0$. By Lemma \ref{lem:2}, we have a decomposition in $E_1(L_{(3,p)})_{\BQ}$
\begin{equation}\label{decom1}
z=z_1+z_2, \quad z_1,z_2, \in E_1(L_{(3,p)})^{\sigma_{1+3\omega_3}=\omega^2}_\BQ
\end{equation}
such that  $\phi(z_1)\in E_3(L_{(p)})^{\chi_p}_\BQ$, $\phi(z_2)\in E_3(L_{(p)})^{\chi_p^{-1}}_\BQ$. 

By Theorem \ref{thm:1}, the point $z$ is of infinite order. Then either $z_1$ or $z_2$ is nonzero.  If $z_1$ (resp. $z_2$) is nonzero, then $\phi(z_1)$  (resp. $\phi(z_2)$) gives rise to a non-torsion point in $E_3(L(p))^{\chi_p}\simeq E_{3p}(K)$ (resp. $E_3(L(p))^{\chi_p^{-1}}\simeq E_{3p^2}(K)$). Since both $E_{3p}$ and $E_{3p^2}$ have CM by $K$, we conclude that
\[\rk_{\BZ} E_{3p}(\BQ)=\rk_{\CO_K} E_{3p}(K)>0\ \  (\textrm{resp.  }\rk_{\BZ} E_{3p^2}(\BQ)=\rk_{\CO_K} E_{3p^2}(K)>0).\]
Hence, either $3p$ or $3p^2$ is a cube sum.
\end{proof}

\section{The Explicit Gross-Zagier Formulae}
\subsection{Test vectors and the explicit Gross-Zagier formulae}
Let $\pi$ be the automorphic representation of $\GL_2(\BA)$ corresponding to ${E_1}_{/\BQ}$. Then $\pi$ is only ramified at $3$ with conductor $3^3$. For $n\in \BQ^\times$, let $\chi_n: \Gal(K^{\ab}/K)\rightarrow\BC^\times$
be the cubic character given by $\chi_n(\sigma)=(\sqrt[3]{n})^{\sigma-1}$. Define
\[L(s,E_1,\chi_n)=L(s-1/2,\pi_K\otimes \chi_n),\quad \epsilon(E_1,\chi_n)=\epsilon(1/2,\pi_K\otimes \chi_n),\]
where $\pi_K$ is the base change of $\pi$ to $\GL_2(\BA_K)$.

Let $p\equiv 2,5\mod 9$ be an odd prime number, and put $\chi=\chi_{3p}$ or $\chi_{3p^2}$. By \cite[Proposition 4.1]{SY17}, we have
\[L(s,E_1,\chi)=\left\{\begin{aligned}
L(s,E_{3p})L(s,E_{9p^2})&\quad \textrm{ if $\chi=\chi_{3p}$,}\\L(s,E_{3p^2})L(s,E_{9p})&\quad \textrm{ if $\chi=\chi_{3p^2}$.}
\end{aligned}\right.\]

By \cite{Liverance}, we have  the epsilon factors $\epsilon(E_{3p})=\epsilon(E_{3p^2})=-1$ and $\epsilon(E_{9p})=\epsilon(E_{9p^2})=+1$, and hence the epsilon factor $\epsilon(E_1,\chi)=-1$. For a quaternion algebra $\BB_{/\BA}$, we define its ramification index $\epsilon(\BB_v)=+1$ for any place $v$ of $\BQ$ if the local component $\BB_v$ is split and $\epsilon(\BB_v)=-1$ otherwise.
\begin{prop}\label{Tun-Saito}
The incoherent quaternion algebra $\BB$ over $\BA$, which satisfies
$$\epsilon(1/2,\pi_v,\chi_v)=\chi_{v}(-1)\epsilon_v(\BB)$$
for all places $p$ of $\BQ$, is only ramified at the infinity place.
\end{prop}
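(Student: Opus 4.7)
The plan is to compute the local root numbers $\epsilon(1/2, \pi_v, \chi_v) \in \{\pm 1\}$ at each place $v$ of $\BQ$ and read off $\epsilon_v(\BB)$ from the identity in the statement. I first observe that $\chi$ takes values in the cube roots of unity, while $(-1)^2 = 1$; since $\chi_v(-1)$ is simultaneously a cube root of unity and a square root of $1$, we must have $\chi_v(-1) = 1$ at every $v$. The Tunnell--Saito identity then reduces to $\epsilon_v(\BB) = \epsilon(1/2, \pi_v, \chi_v)$, so it suffices to show this is $+1$ at every finite place and $-1$ at $\infty$.

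At the archimedean place, $\pi_\infty$ is the weight two holomorphic discrete series attached to $E_1$, and $\chi_\infty$ is trivial because $K_\infty = \BC$ and $\chi$ has finite order. The standard archimedean Tunnell--Saito computation for this pair gives $\epsilon(1/2, \pi_\infty, \chi_\infty) = -1$; equivalently, since the Shimura variety supporting the weight two holomorphic modular form attached to $E_1$ must be a Shimura curve, we need $\BB_\infty = \BH$. At any finite place $v \nmid 3p$, both $\pi_v$ and $\chi_v$ are unramified---the conductor of $E_1$ is $27$ and $\chi = \chi_{3p^i}$ only ramifies above $3$ and $p$---so the root number is automatically $+1$.

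The main calculation is at $v = p$. Since $p$ is inert in $K$, $K_p/\BQ_p$ is unramified quadratic; by Proposition \ref{LCF} the extension $H_{3p}/K$ is totally tamely ramified at $p$, so $\chi_p$ is a tame ramified cubic character of $K_p^\times$. Since $\chi$ is anticyclotomic ($\chi^c = \chi^{-1}$, which follows from the definition $\chi(\sigma) = \sqrt[3]{n}^{\sigma - 1}$ together with the fact that complex conjugation sends $\omega \mapsto \omega^{-1}$ and fixes $\sqrt[3]{n}$), the restriction $\chi_p|_{N_{K_p/\BQ_p}(K_p^\times)}$ is trivial. This norm subgroup has index $2$ in $\BQ_p^\times$, so $\chi_p|_{\BQ_p^\times}$ has order dividing $2$; combined with $\chi_p$ being cubic, this forces $\chi_p|_{\BQ_p^\times} = 1$. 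Writing the base change $\pi_{K,p} = \mu_K \boxplus \nu_K$ with $\mu_K \nu_K = 1$, the standard decomposition of the epsilon factor of an unramified principal series twisted by a ramified character gives
\[
\epsilon(1/2, \pi_{K,p} \otimes \chi_p) = (\mu_K \nu_K)(\mathfrak{c}_{\chi_p})\,\epsilon(1/2, \chi_p)^2 = \epsilon(1/2, \chi_p)^2.
\]
For the residual cubic character $\bar\chi$ on $\BF_{p^2}^\times$ (which is trivial on $\BF_p^\times$ because $\chi_p|_{\BQ_p^\times} = 1$), the relation $\bar\chi^{\Frob_p} = \bar\chi^{-1}$ combined with the substitution $y = x^p$ yields $g(\bar\chi^{\Frob_p}) = g(\bar\chi)$, so $g(\bar\chi)^2 = g(\bar\chi) g(\bar\chi^{-1}) = \bar\chi(-1)\,p^2 = p^2$, giving $\epsilon(1/2, \chi_p) = \pm 1$ and hence $\epsilon(1/2, \pi_p, \chi_p) = 1$.

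Finally, at $v = 3$ both $\pi_3$ and $\chi_3$ are ramified and a direct computation would be technical. Instead I would invoke the global product formula
\[
-1 = \epsilon(E_1, \chi) = \prod_v \epsilon(1/2, \pi_v, \chi_v),
\]
whose left-hand side comes from the factorization $L(s, E_1, \chi) = L(s, E_{3p^i}) L(s, E_{9p^{3-i}})$ and the signs $\epsilon(E_{3p^i}) = -1$, $\epsilon(E_{9p^{3-i}}) = +1$ of \cite{Liverance}. Combining with the previous steps forces $\epsilon(1/2, \pi_3, \chi_3) = +1$. Hence $\epsilon_v(\BB) = +1$ at every finite place and $\epsilon_\infty(\BB) = -1$, so $\BB$ is ramified exactly at the archimedean place. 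The main obstacle is the $v = p$ calculation, which requires extracting the anticyclotomic structure of $\chi_p$ and executing the Davenport--Hasse style Gauss sum identity; the $v = 3$ computation would be the hardest piece if attempted directly, but is bypassed entirely via the global product formula.
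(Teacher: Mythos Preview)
Your proof is correct and follows essentially the same strategy as the paper: compute $\epsilon(1/2,\pi_v,\chi_v)$ place by place, dispose of $v=3$ via the global product formula and the known sign $\epsilon(E_1,\chi)=-1$, and use that $\chi$ is cubic to force $\chi_v(-1)=1$. The only difference is at $v=p$, where the paper simply invokes \cite[Proposition~6.3]{Gross88} (which covers the case $\pi_p$ unramified and $p$ inert in $K$) rather than your hands-on Gauss-sum computation; your argument there is correct but more laborious than necessary.
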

\begin{proof} Since $\pi$ is unramified at finite places $v\nmid 3$ and $\chi$ is unramified at finite places $v\neq 3p$ and $p$ is inert in $K$, by \cite[Proposition 6.3]{Gross88}, we get $\epsilon(1/2,\pi_v,\chi_v)=+1$ for all  finite $v\neq 3$. Again by \cite[Proposition 6.5]{Gross88}, we also know that $\epsilon(1/2,\pi_\infty,\chi_\infty)=-1$. Since $\epsilon(1/2,\pi,\chi)=-1$, we see that $\epsilon(1/2,\pi_3,\chi_3)=+1$. Since $\chi$ is a cubic character, $\chi_v(-1)=1$ for any $v$. Hence $\BB$ is only ramified at the infinity place.
\end{proof}
Let $\BB_f^\times=\GL_2(\BA_f)$ be the finite part of $\BB^\times $. For any open compact subgroup $U\subset \BB_f^\times$,  the Shimura curve $X_U$  associated to $\BB$ of level $U$ is the usual modular curve with complex uniformazition
$$X_U(\BC)=\GL_2(\BQ)^+\backslash\left(\CH \bigsqcup \BP^1(\BQ)\right)\times \GL_2(\BA_f)/U.$$
Let $$\pi_{E_1}=\varinjlim_{U}\Hom^0_{\xi_U}(X_U,E_1),$$
where $\Hom^0_{\xi_{U}}(X_U,E_1)$ denotes the morphisms in $\Hom_\BQ(X_U,E_1)\otimes_\BZ\BQ$ using the Hodge class $\xi_U$ as a base point. Then $\pi_{E_1}$ is an automorphic representation of $\BB^\times$ over $\BQ$ and $\pi$ is the Jacquet-Langlands correspondence of $\pi_{E_1}\otimes_\BQ\BC$ on $\GL_2(\BA)$. By Proposition \ref{Tun-Saito} and a theorem of Tunell-Saito \cite[Theorem 1.4.1]{YZZ}, the space
\[\Hom_{\BA_K^\times}(\pi_{E_1}\otimes \chi,\BC)\otimes \Hom_{\BA_K^\times}(\pi_{E_1}\otimes \chi^{-1},\BC)\]
is one-dimensional with a canonical generator $\beta^0=\otimes \beta_v^0$ where, for each place $v$ of $\BQ$, the bilinear form
\[\beta_v^0:\pi_{E_1,v}\otimes \pi_{E_{1,v}}\lra \BC\]
is given by
\[\beta_v^0(f_1\otimes f_2)=\int_{K_v^\times/\BQ_v^\times}\frac{(\pi_{E_1,v}(t)f_1,f_2)\chi_v(t)}{(f_1,f_2)_v}dt,\quad f_1,f_2\in \pi_{E_1,v}.\]
Here $(\cdot,\cdot)_v$ is a $\BB_v^\times$-invariant pairing on $\pi_{E_1,v}\times \pi_{E_1,v}$ and $dt$ is a Haar measure on $K_v^\times/\BQ_v^\times$. For more details we refer to \cite[Section 1.4]{YZZ} and \cite[Section 3]{CST14}.

The elliptic curve $E_{1}$ has conductor $3^3$ and let $f:X_0(3^3)\ra E_1$ be the modular  isomorphism given by Proposition \ref{mc}. Let $$\CR=\matrixx{\widehat{\BZ}}{\widehat{\BZ}}{3^3\cdot\widehat{\BZ}}{\widehat{\BZ}} \subset \BB_f(\wh{\BZ})=\M_2(\wh{\BZ})$$
be the Eichler order of discriminant $3^3$.
Then $U_0(3^3)=\CR^\times$, and by the newform theory \cite{Casselman1973}, the invariant subspace $\pi_{E_1}^{\CR^\times}$ has dimension $1$ and is generated by $f$.

\begin{prop}\label{test-vector}
The modular isomorphism $f:X_0(3^3)\ra E_1$ is a test vector for the pair $(\pi_{E_1},\chi)$, i.e. $\beta^0(f,f)\neq 0$.
\end{prop}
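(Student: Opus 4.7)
The plan is to show that $\beta^0(f,f) = \prod_v \beta_v^0(f_v,f_v) \neq 0$ by verifying local non-vanishing at every place $v$ of $\BQ$, using the tensor product decomposition $\beta^0 = \otimes_v \beta_v^0$ which exists because $\Hom_{\BA_K^\times}(\pi_{E_1}\otimes \chi,\BC)$ is one-dimensional. Since $f$ is the newform generator of $\pi_{E_1}^{\CR^\times}$ with $\CR_v^\times$ maximal (i.e.\ $\GL_2(\BZ_v)$) at every finite $v \neq 3$, the local vector $f_v$ is spherical away from $3$.

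At every finite $v \notin \{3,p\}$, both $\pi_v$ and $\chi_v$ are unramified, and the standard Macdonald/Waldspurger formula expresses $\beta_v^0(f_v,f_v)$ as a ratio of local $L$-values at $s=1/2$ times explicit local factors; these are all manifestly nonzero. At $v = \infty$, the character $\chi_\infty$ is trivial (since $\chi$ has finite order), $\pi_\infty$ is the weight-$2$ holomorphic discrete series, $K_\infty^\times/\BR^\times \simeq S^1$, and one verifies by direct integration of the weight-$2$ matrix coefficient against the constant function on $S^1$ that $\beta_\infty^0(f_\infty,f_\infty) \neq 0$.

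At $v = p$, the representation $\pi_p$ is the unramified principal series with $a_p = 0$ (since $E_1$ has supersingular reduction at the inert prime $p$), and by the tame Hilbert symbol computations of Proposition \ref{LCF}, $\chi_p$ is trivial on $\BZ_p^\times$ and on $1+p\CO_{K,p}$ but restricts to a nontrivial cubic character on $\mu_{p+1} \simeq K_p^\times/\BQ_p^\times(1+p\CO_{K,p})$ (using $3 \mid p+1$). From the explicit form of $\rho$ one computes $\rho^{-1}(\CR_p^\times)\cap K_p^\times = \BZ_p^\times(1+p\CO_{K,p})$, which coincides with the stabilizer of $f_p$ under the torus action. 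In the Bruhat–Tits tree picture, $f_p$ sits at the standard vertex $v_\ast$, $\rho(\CO_{K,p}^\times)$ fixes an adjacent CM vertex $v_0$, and $\mu_{p+1}$ permutes the $p+1$ neighbors of $v_0$ simply transitively. The spherical matrix coefficient $\phi$ satisfies $\phi_0 = 1$, $\phi_1 = a_p/(p+1) = 0$, $\phi_2 = -1/p$, so that
\[
\beta_p^0(f_p,f_p) \;=\; c_p \Bigl( 1\cdot\chi_p(1) + \tfrac{-1}{p}\sum_{\zeta \in \mu_{p+1}\setminus\{1\}}\chi_p(\zeta)\Bigr) \;=\; c_p\Bigl(1 + \tfrac{1}{p}\Bigr) \neq 0,
\]
using $\sum_{\zeta \in \mu_{p+1}}\chi_p(\zeta) = 0$.

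At $v = 3$, both $\pi_3$ (conductor $3^3$) and $\chi_3$ are wildly ramified; this is the main obstacle. The strategy is to identify the local new vector for the pair $(\pi_3,\chi_3)$ in the sense of Gross–Prasad and verify that $f_3$ matches it up to scalar. Explicitly, from $\rho(\omega) = \begin{pmatrix}-1 & -p/9 \\ 9/p & 0\end{pmatrix}$ one computes $\rho^{-1}(\CR_3)\cap K_3 = \BZ_3 + 9\CO_{K,3}$, the local order of conductor $9$. The conductor of $\chi_3$ is determined from Proposition \ref{LCF} via Hilbert symbols, and the crucial conductor-matching identity, namely that this local order has conductor equal to that of $\pi_3 \otimes \chi_3$, implies that the one-dimensional $\CR_3^\times$-invariant subspace of $\pi_{E_1,3}$ coincides with the image of the (unique up to scalar) local Gross–Prasad test vector. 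The non-vanishing $\beta_3^0(f_3,f_3) \neq 0$ then follows. Combining the four local non-vanishing statements yields $\beta^0(f,f) \neq 0$, completing the proof.
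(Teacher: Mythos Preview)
Your reduction to local non-vanishing is correct, and at the places $v\neq 3$ your argument is essentially what the paper does implicitly: the admissible order $\CR'$ of \cite{CST14} agrees with $\CR$ at every $v\neq 3$ (the paper states this directly), so non-vanishing of $\beta_v^0(f_v,f_v)$ there follows from the general theory rather than from the case-by-case tree and archimedean computations you sketch. A small slip: at $v=\infty$ the relevant local component is on $\BB_\infty^\times$ with $\BB_\infty$ the Hamilton quaternions, so the representation is the trivial one (the Jacquet--Langlands transfer of the weight-$2$ discrete series), not the discrete series itself; the non-vanishing is then immediate.

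The genuine gap is at $v=3$. Your conductor-matching claim---that the $\CR_3^\times$-fixed line coincides with the Gross--Prasad test vector---is precisely what fails. The paper's own computations show that $\CR_3\neq \CR_3'$: in the proof of Theorem~\ref{thm:GZ} one finds $\Vol(\CR^\times)/\Vol(\CR'^\times)=2/3$ and the ratio $\beta_3^0(f_3',f_3')/\beta_3^0(f_3,f_3)=12$, so $f_3$ is \emph{not} a scalar multiple of the admissible test vector $f_3'$. Thus no abstract identification of $f_3$ with a known test vector is available, and one must actually compute $\beta_3^0(f_3,f_3)$. The paper does this in Proposition~\ref{local} by working in the Kirillov model of the supercuspidal $\pi_3$: one writes the toric integral as a finite sum over $K_3^\times/\BQ_3^\times\CO_{9p,3}^\times$, computes the matrix coefficients $\Phi(1+y\sqrt{-3})$ and $\Phi(3y+\sqrt{-3})$ explicitly via the Bruhat decomposition and the action of $w$ on the basis $1_{\nu,n}$, and obtains $\beta_3^0(f_3,f_3)=\Vol(K_3^\times/\BQ_3^\times)/12\neq 0$. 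Without this explicit computation (or an equivalent one), the proof at $v=3$ is incomplete.
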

\begin{proof}
Let $\CR'$ be the admissible order for the pair $(\pi_{E_1},\chi)$ in the sense of \cite[Definition 1.3]{CST14}. Since $\CR'$ and $\CR$ only differs at $3$. It suffices to verify that $\beta_3^0(f_3,f_3)\neq 0$. We delay this verification to the next subsection Proposition \ref{local}.
\end{proof}

Let $\omega_{E_n}$ be the invariant differential on the minimal model of $E_n$.  Define the minimal real period $\Omega_n$ of $E_n$ by
$$\Omega_{n}=\int_{E_n(\BR)}|\omega_{E_n}|.$$
By \cite[Formula (9)]{ZK}, we have
\begin{equation}\label{equation4}
\Omega_{3p}\Omega_{9p^2}=\Omega_{3p^2}\Omega_{9p}=p^{-1}\cdot\Omega_1^2
\end{equation}
 Using Sage we compute that $\{\Omega_{1},\Omega_{1}\cdot(\frac{1}{2}+\frac{\sqrt{-3}}{2})\}$ is a $\BZ$-basis of the period lattice $L$ of the minimal model of $E_1$. So

\begin{equation}\label{period}
\sqrt{3}\Omega^2_{1}=2\int_{\BC/L}dxdy=\int_{E(\BC)}|\omega_{E_1}\wedge\overline{\omega}_{E_1}|=8\pi^2(\phi,\phi)_{\Gamma_0(27)},
\end{equation}
where $\phi$ is the newform of level $3^3$ and weight $2$ associated to $E_1$, and  $(\phi,\phi)_{\Gamma_0(3^3)}$ is the Petersson norm of $\phi$ defined by
$$(\phi,\phi)_{\Gamma_0(27)}=\int\int_{\Gamma_0(3^3)}|\phi(z)|^2dxdy,\ \ \ \ z=x+iy.$$

Let $P_\tau=[\tau,1]$ be the CM point on $X_0(27)(H_{9p})$.  Define the Heegner points
 \[R_1=\Tr_{H_{9p}/L_{(3p)}} P_\tau\in E_1(L_{(3p)}),\]
 and
 \[R_2=\Tr_{H_{9p}/L_{(3p^2)}} P_\tau\in E_1(L_{(3p^2)}).\]
 Recall 
 \[z=\Tr_{H_{9p}/L_{(3,p)}},\]
and from (\ref{decom1}), we have the decomposition
 \[z=z_1+z_2\in E_1(L_{(3,p)})_\BQ,\]
 where $z_1$ resp. $z_2$ gives rise to a point in $E_{3p}(K)_\BQ$ resp. $E_{3p^2}(K)_\BQ$. The Heegner points $R_1$ and $R_2$ are related to $z_1$ and $z_2$ as follows.
 \begin{prop}\label{decomposition}
In $E_1(L_{(3,p)})_\BQ$, we have
\[R_1=3z_1\textrm{ and }R_2=3z_2. \]
\end{prop}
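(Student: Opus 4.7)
The plan is to reduce the proposition to a bookkeeping calculation with Galois eigenvalues. By transitivity of trace, $R_i = \Tr_{L_{(3,p)}/L_{(3p^i)}}(z)$ for $i=1,2$, so the task is to expand $z = z_1 + z_2$ as a sum over $\Gal(L_{(3,p)}/L_{(3p^i)})$ and collapse the result using $1+\omega+\omega^2 = 0$.

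Write $\sigma := \sigma_{1+3\omega_3}$ and $\tau := \sigma_{\omega_3}$, which together generate $\Gal(L_{(3,p)}/K)\simeq (\BZ/3\BZ)^2$. Proposition \ref{LCF} gives the explicit formulas
\[\sigma(\sqrt[3]{3}) = \omega^2\sqrt[3]{3},\quad \sigma(\sqrt[3]{p}) = \sqrt[3]{p},\quad \tau(\sqrt[3]{3}) = \sqrt[3]{3},\quad \tau(\sqrt[3]{p}) = \omega^{\pm 1}\sqrt[3]{p},\]
with the sign depending on $p\bmod 9$. Taking $p\equiv 2\bmod 9$ for concreteness (the case $p\equiv 5\bmod 9$ being symmetric), a direct computation of the action on $\sqrt[3]{3p} = \sqrt[3]{3}\sqrt[3]{p}$ and on $\sqrt[3]{3p^2}$ identifies
\[\Gal(L_{(3,p)}/L_{(3p)}) = \langle\sigma\tau\rangle,\qquad \Gal(L_{(3,p)}/L_{(3p^2)}) = \langle\sigma\tau^2\rangle.\]

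Next I read off the Galois action on $z_1,z_2$ from the characterization of the summands. By construction both $z_j$ lie in the $\sigma = [\omega^2]$ eigenspace, so $z_j^\sigma = [\omega^2]z_j$. Since $\phi:E_1\to E_3$, $(x,y)\mapsto(3^{2/3}x,3y)$, is defined over $L_{(3)} = K(\sqrt[3]{3})$ and hence fixed by $\tau$, and since $\phi$ commutes with complex multiplication, the conditions $\phi(z_1)\in E_3(L_{(p)})^{\chi_p}$ and $\phi(z_2)\in E_3(L_{(p)})^{\chi_p^{-1}}$ translate into $z_1^\tau = [\omega]z_1$ and $z_2^\tau = [\omega^2]z_2$. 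A three-line calculation now gives
\[R_1 = \sum_{k=0}^{2}(z_1+z_2)^{(\sigma\tau)^k} = 3z_1 + (1+\omega+\omega^2)z_2 = 3z_1,\]
and the same argument with $\sigma\tau^2$ in place of $\sigma\tau$ yields $R_2 = 3z_2$.

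There is no substantive obstacle here: the entire proof is an unwinding of definitions. The only step that requires a small amount of care is checking that $\phi$ is $\tau$-invariant and intertwines complex multiplication, so that the $\chi_p^{\pm 1}$-eigenspace condition on $\phi(z_i)$ transports back to a $[\omega]$ or $[\omega^2]$ eigenvalue for $z_i$ under $\tau$; everything else is keeping the cube-root-of-unity bookkeeping straight.
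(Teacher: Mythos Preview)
Your proof is correct and follows essentially the same route as the paper's own proof: both identify $\Gal(L_{(3,p)}/L_{(3p^i)})$ via Proposition~\ref{LCF}, read off the $\sigma$- and $\tau$-eigenvalues of $z_1,z_2$ from the defining decomposition~(\ref{decom1}), and collapse the trace using $1+\omega+\omega^2=0$. The only cosmetic difference is that the paper normalizes $\tau:=\sigma_{\omega_3}^i$ (with $i=1$ or $2$ according to $p\bmod 9$) so that $(\sqrt[3]{p})^{\tau-1}=\omega$ in both cases, whereas you fix $\tau=\sigma_{\omega_3}$ and treat $p\equiv 2\bmod 9$ explicitly, leaving $p\equiv 5\bmod 9$ to symmetry; your added remark that $\phi$ is $\tau$-invariant and CM-equivariant makes explicit a step the paper leaves implicit.
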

\begin{proof}
Denote
\[\sigma=\sigma_{1+3\omega_3}\textrm{ and }\tau=\sigma_{\omega_3}^i,\]
where $i=1$ if $p\equiv 2\mod 9$ and $i=2$ otherwise.
By Proposition \ref{LCF}, 
\[(\sqrt[3]{3})^{\sigma-1}=\omega^2,\quad (\sqrt[3]{p})^{\sigma-1}=1,\]
and
\[(\sqrt[3]{3})^{\tau-1}=1,\quad (\sqrt[3]{p})^{\tau-1}=\omega.\]
Then
\[\Gal(L_{(3,p)}/L_{(3p)})=\langle \sigma \tau\rangle, \quad \Gal(L_{(3,p)}/L_{(3p^2)})=\langle \sigma \tau^2\rangle.\]
Since $\phi(z_1)$ is $\chi_p$-eigen while $\phi(z_2)$ is $\chi_{p^2}$-eigen (see the proof of Theorem \ref{main}), 
\begin{eqnarray*}
R_1=\Tr_{L_{(3,p)}/L_{(3p)}}z&=&(z_1+z_1^{\sigma\tau}+z_1^{\sigma^2\tau^2})+(z_2+z_2^{\sigma\tau}+z_2^{\sigma^2\tau^2})\\
&=&3z_1+[1+\omega+\omega^2]z_2=3z_1\in E_1(L_{(3,p)})_\BQ.
\end{eqnarray*}
Similarly, we have
\[R_2=3z_2\in E_1(L_{(3,p)})_\BQ.\]

\end{proof}
Therefore, if the  Heegner point $R_1$ resp. $R_2$ is of infinite order, then $3p$ resp. $3p^2$ is a cube sum. These Heegner points satisfy the following explicit Gross-Zagier formulae.
\begin{thm} \label{thm:GZ}
Let  $p\equiv 2,5 \mod 9$ be odd prime numbers.  We  have the following explicit height formula of Heegner points:
\[\frac{L'(1,E_{3p})L(1,E_{9p^2})}{\Omega_{3p}\Omega_{9p^2}}=\wh{h}_\BQ(R_1),\]
and
\[\frac{L'(1,E_{3p^2})L(1,E_{9p})}{\Omega_{3p^2}\Omega_{9p}}=  \wh{h}_\BQ(R_2).\]

\end{thm}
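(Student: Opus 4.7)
The plan is to apply the explicit Gross--Zagier formula of Yuan--Zhang--Zhang \cite{YZZ} to the pair $(\pi_{E_1},\chi_1)$ (respectively $(\pi_{E_1},\chi_2)$) with the test vector $f:X_0(27)\to E_1$ whose test-vector property is granted by Proposition \ref{test-vector}, and then translate the global identity from Petersson norms into the classical real periods via (\ref{period}) and (\ref{equation4}). Proposition \ref{Tun-Saito} already guarantees that the incoherent quaternion algebra adapted to $(\pi_{E_1},\chi_1)$ is split at all finite places, so that the Shimura curve in question is just the classical modular curve $X_0(27)$, and Proposition \ref{test-vector} ensures we are applying the formula with the correct new-vector at level $27$.

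I would carry out the argument in three steps. First, I would write down YZZ's identity for the $\chi_1$-eigencomponent of $f_\ast(P_\tau)$. Because $f_v$ is a spherical new-vector outside $3$ and $\chi_{1,v}$ is unramified outside $3p$ with $p$ inert in $K$, the local ratios $\alpha_v(f_v):=\beta_v^0(f_v,f_v)/(f_v,f_v)_v$ appearing in YZZ are equal to the ratio of local $L$-values predicted by the Ichino--Ikeda normalisation at every place $v\notin\{3,p,\infty\}$; at $v=3$ and $v=p$ these local ratios will be computed explicitly in Proposition \ref{local} of the next subsection. The resulting identity takes the schematic shape
\[
L'(1/2,\pi_{E_1,K}\otimes\chi_1)\;=\;c\cdot 8\pi^2(\phi,\phi)_{\Gamma_0(27)}\cdot\widehat{h}_K\bigl(f_\ast(P_\tau)_{\chi_1}\bigr),
\]
where $c$ is an explicit rational constant collecting the local factors.

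Second, I would identify the $\chi_1$-eigencomponent $f_\ast(P_\tau)_{\chi_1}$ with the Heegner point $R_1$. By Corollary \ref{galois}, $P_\tau$ is defined over $H_{9p}$ with the prescribed action of $\sigma_{1+3\omega_3}$, and Proposition \ref{decomposition} shows that the $\chi_1$-isotypic projection of $z$ equals $\tfrac13 R_1$ modulo the $\chi_2$-component which is orthogonal to it under the Néron--Tate pairing. Hence, after clearing factors of $3$ and comparing $\widehat{h}_K = 2\widehat{h}_\BQ$ on $\BQ$-rational eigencomponents, the height term above becomes a constant multiple of $\widehat{h}_\BQ(R_1)$. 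Finally, I would convert the Petersson norm into periods: (\ref{period}) gives $8\pi^2(\phi,\phi)_{\Gamma_0(27)}=\sqrt{3}\,\Omega_1^2$ and (\ref{equation4}) gives $\Omega_1^2=p\cdot\Omega_{3p}\Omega_{9p^2}$, while the base-change factorisation from \cite[Prop.~4.1]{SY17} together with $\epsilon(E_{9p^2})=+1$ (so $L(1,E_{9p^2})\ne 0$ is a regular value) yields $L'(1,E_1,\chi_1)=L'(1,E_{3p})L(1,E_{9p^2})$. Collecting all constants then produces the stated formula, and the argument for $R_2$ is entirely parallel with $\chi_1$ replaced by $\chi_2$.

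The main obstacle will be the bookkeeping of the rational constant $c$, i.e.\ matching up the local Tamagawa-type factors at $v=3$ (where both $\pi$ and $\chi$ are ramified) and at $v=p$ (where $\chi$ is ramified and $p$ is inert in $K$) against YZZ's normalisations. The computation at $v=3$ requires the admissible-order analysis in the sense of \cite{CST14} and is exactly what Proposition \ref{local} is designed to provide; the computation at $v=p$ is cleaner because the local representation is unramified principal series twisted by a tamely ramified character, but one must still check that the local factor is a $p$-power that combines correctly with the $p^{-1}$ appearing in (\ref{equation4}) to give the clean constant $1$ in the final identity.
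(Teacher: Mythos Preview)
Your plan is essentially the paper's own approach: apply the explicit Gross--Zagier formula to $(\pi_{E_1},\chi)$ with the newform test vector $f$, compute the ramified local factor, identify the Heegner cycle with $R_1$, and convert Petersson norms to periods via (\ref{period}) and (\ref{equation4}). There are two points where your execution diverges from the paper's and where you should be careful.

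First, you invoke \cite{YZZ} directly and propose to compute local ratios at $v=3$ and $v=p$ separately. The paper instead applies the refined formula of Cai--Shu--Tian \cite[Theorem~1.6]{CST14}, which is calibrated against an \emph{admissible} order $\CR'$ and already incorporates the Ichino--Ikeda type local constants. Since $\CR$ and $\CR'$ differ only at $3$ (Proposition~\ref{test-vector}), there is no separate computation needed at $v=p$; Proposition~\ref{local} computes only $\beta_3^0(f_3,f_3)$, and the ratio $\beta_3^0(f'_3,f'_3)/\beta_3^0(f_3,f_3)=12$ together with $(f,f)_{\CR'^\times}=2/3$ is all the local bookkeeping required. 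Your proposed additional computation at $p$ is not wrong in principle, but it duplicates work absorbed by the CST14 normalisation, and your reference to Proposition~\ref{local} for it is inaccurate.

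Second, for the height identification you route through Proposition~\ref{decomposition} and the decomposition $z=z_1+z_2$. The paper proceeds more directly: it writes the Heegner cycle $P^0_\chi(f)$ as $\tfrac{1}{9}\sum_{t\in\Pic(\CO_{9p})}f(P_\tau)^{\sigma_t}\chi(t)$, groups this as a $\chi$-weighted sum of the $R_1^\sigma$ over $\Gal(L_{(3p)}/K)=\langle\sigma_{1+3\omega_3}\rangle$, and then uses Corollary~\ref{galois} (namely $R_1^{\sigma_{1+3\omega_3}}=[\omega^2]R_1$) plus $|1+\omega^2|=|\omega^2|=1$ to evaluate the bilinear pairing $\langle P^0_\chi(f),P^0_{\chi^{-1}}(f)\rangle_{K,K}=\tfrac{1}{9}\wh h_\BQ(R_1)$. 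Your route via $z_1=\tfrac13 R_1$ can be made to work, but note that the object appearing in the Gross--Zagier formula is the $\chi$-weighted sum over the full group $\Pic(\CO_{9p})$, not the $\chi_1$-projection of $z$ itself; you would still need to relate these and carry the factor of~$9$ coming from $[H_{9p}:L_{(3,p)}]\cdot[L_{(3,p)}:L_{(3p)}]$ correctly.
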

\begin{proof}
Let $\CR'$ be the admissible order  for the pair $(\pi_{E_1},\chi)$ and let $f'\neq 0$ be a test vector in $V(\pi_{E_1},\chi)$ which is defined in \cite[Definition 1.4]{CST14}. The newform $f$ only differs from $f'$ at the local place $3$. Define the Heegner cycle
$$P^0_{\chi}(f)=\frac{\sharp \Pic(\CO_p)}{\Vol(\widehat{K}^\times/K^\times\widehat{\BQ}^\times,dt)}
\int_{K^\times\widehat{\BQ}^\times\backslash \widehat{K}^\times}f(P_\tau)^{\sigma_t}\chi(t)dt,$$
 and define $P^0_{\chi^{-1}}(f)$ similarly as in \cite[Theorem 1.6]{CST14}.
It follows from Proposition \ref{local} that
\[\frac{\beta_3^0(f'_3,f'_3)}{\beta_3^0(f_3,f_3)}=12.\]
By \cite[Theorem 1.6]{CST14}, We have
$$L'(1,E_1,\chi)=6\cdot\frac{(8\pi^2)\cdot(\phi,\phi)_{\Gamma_0(27)}}{\sqrt{3}p\cdot(f,f)_{\CR'^{\times}}}\cdot\left\langle P^0_{\chi}(f), P^0_{\chi^{-1}}(f)\right\rangle_{K,K},$$
where $(\cdot,\cdot)_{\CR'}$ is the pairing on $\pi_{E_1}\times \pi_{{E_1}^\vee}$ defined as in \cite[page 789]{CST14}, and $\langle\cdot,\cdot\rangle_{K,K}$ is a pairing from $E_1(\ov{K})_\BQ\times_K E_1(\ov{K})_\BQ$ to $\BC$  such that $\langle\cdot,\cdot\rangle_{K}=\rm{Tr}_{\BC/\BR}\langle\cdot,\cdot\rangle_{K,K}$ is the Neron-Tate height over the base field $K$, see \cite[page 790]{CST14}. In our case, by \cite[Lemma 2.2 and Lemma 3.5]{CST14},
$$(f,f)_{\CR'}=\frac{\Vol(X_{\CR^{' \times}})}{\Vol(X_{\CR^\times})}\deg f=\frac{\Vol(\CR^\times)}{\Vol(\CR^{'\times})}=2/3.$$
So, we get
\begin{equation}
L'(1,E_1,\chi)=9\frac{(8\pi^2)\cdot(\phi,\phi)_{\Gamma_0(3^3)}}{\sqrt{3}p}\cdot\left\langle P^0_{\chi}(f), P^0_{\chi^{-1}}(f)\right\rangle_{K,K}.
\end{equation}

On the other hand
$$P^0_{\chi}(f)=\frac{\# \Pic(\CO_p)}{\# \Pic(\CO_{9p})}\sum_{t\in\Pic(\CO_{9p})}f(P_\tau)^{\sigma_t}\chi(t).$$
Since
$$\frac{\#\Pic(\CO_p)}{\#\Pic(\CO_{9p})}=[K^\times \wh{\CO}_p^\times:K^\times \wh{\CO}_{9p}^\times]^{-1}=\frac{1}{9},$$
we have
$$P^0_{\chi}(f)=\frac{1}{9}\sum_{t\in\Pic(\CO_{9p})}f(P_\tau)^{\sigma_t}\chi(t).$$
In the following of the proof we suppose $\chi=\chi_{3p}$ and the case for $\chi=\chi_{3p^2}$ follows the same way. We have
\begin{eqnarray*}\langle P^0_{\chi}(f),P^0_{\chi^{-1}}(f)\rangle_{K,K}&=&\frac{1}{9^2}
\langle\sum_{\sigma\in\Gal(L_{(3p)}/K)}R_1^{\sigma}\chi^{-1}(\sigma),\sum_{\sigma\in\Gal(L_{(3p)}/K)}
R_1^{\sigma}\chi(\sigma)\rangle_{K,K}\\
&=&\frac{1}{27}\langle R_1,\sum_{\sigma\in\Gal(L_{(3p)}/K)}R_1^{\sigma}\chi(\sigma)\rangle_{K,K}\\
&=&\frac{1}{27}\left(\langle R_1,R_1\rangle_{K,K}-\left\langle R_1,R_1^{\sigma_{1+3\omega_3}}\right\rangle_{K,K}\right).
\end{eqnarray*}
Here we note that $\Gal(L_{(3p)}/K)=\langle \sigma_{1+3\omega_3}\rangle$.
By Corollary \ref{galois}, we have $R_1^{\sigma_{1+3\omega_3}}=[\omega^2]R_1$, then
$$\left\langle R_1,R_1^{\sigma_{1+3\omega_3}}\right\rangle_{K,K}=\frac{1}{2}\left(\widehat{h}_K([1+\omega^2]R_1)
-\widehat{h}_K([\omega^2]R_1)-\widehat{h}_K(R_1)\right).$$
Since $|1+\omega^2|=|\omega^2|=1$, by definition, $\widehat{h}_K([1+\omega^2]R_1)=\widehat{h}_K([\omega^2]R_1)=\widehat{h}_K(R_1)$. Then
$$
\left\langle R_1,R_1^{\sigma_{1+3\omega_3}}\right\rangle_{K,K}=-\frac{1}{2}\widehat{h}_K(R_1),$$
and hence
\begin{equation}\label{equation2}
\left\langle P^0_{\chi}(f),P^0_{\chi^{-1}}(f)\right\rangle_{K,K}=\frac{1}{18}\widehat{h}_K(R_1)=\frac{1}{9}\widehat{h}_\BQ(R_1).
\end{equation}

Finally, combining  (\ref{equation4})-(\ref{equation2}), we get
\[\frac{L'(1,E_{3p})L(1,E_{9p^2})}{\Omega_{3p}\Omega_{9p^2}}=\wh{h}_\BQ(R_1).\]
\end{proof}

\subsection{Local matrix coefficients}
Define the local matrix coefficient
\[\Phi(t)=\frac{(\pi_3(t)f_3,f_3)}{(f_3,f_3)},\quad t\in \GL_2(\BQ_3).\]
Since $\chi$ has conductor $9p$, we have
\begin{eqnarray}\label{beta}
\beta^0_3(f_3,f_3)&=&\frac{\Vol(K_3^\times/\BQ_3^\times)}
{ |K_3^\times/\BQ_3^\times\CO_{9p,3}^\times|}\sum_{t\in K_3^\times/\BQ_3^\times\CO_{9p,3}^\times}\Phi(t)\chi_{3}(t)\\
&=&\frac{\Vol(K_3^\times/\BQ_3^\times)}
{ 18}\sum_{t\in S\bigsqcup S'}\Phi(t)\chi_{3}(t),\nonumber
\end{eqnarray}
where
\[S=\{1+y\sqrt{-3}| y\in \BZ/9\BZ\},\quad S'=\{3y+\sqrt{-3}| y\in \BZ/9\BZ\}.\]
Note that $S\bigsqcup S'$ is a complete system of representatives of $K_3^\times/\BQ_3^\times\CO_{9p,3}^\times$. In the above, we view $\chi$ as an adelic character through the class field theory and $\chi_3$ is the $3$-part of $\chi$. In order to compute $\beta_3^0(f_3,f_3)$, it suffices to compute the local matrix coefficients $\Phi(t), t\in S\bigsqcup S'$.

For the basic knowledge and conventions from representation theory we need to compute these matrix coefficients, we refer to \cite[\S 4.2.2]{SY17}. Let $\psi$ be the additive character such that $\psi(x)=e^{2\pi i \iota(x)}$ where $\iota:\BQ_3\rightarrow \BQ_3/\BZ_3 \subset \BQ/\BZ$ is the map given by $x\mapsto -x\mod \BZ_3$ and put $\psi^-(x)=\psi(-x)$. Let $dx$ be the Haar measure on $\BQ_3$ which is self-dual with respect to $\psi$, and we fix a Haar measure $d^\times x$ on $\BQ_3^\times$ such that $\Vol(\BZ_3^\times)=1$. The local representation $\pi_3=\pi_{E_1,3}$ is supercuspidal, and the Kirillov model $\sK(\pi_3,\psi)$ is the unique realization of $\pi_3$ on the Schwartz function space $\CS(\BQ_3^\times)$ such that
$$\pi\left(\matrixx{a}{b}{0}{1}\right)\phi(x)=\psi( bx)\phi(ax),\quad \phi\in \CS(\BQ_3^\times).$$
The $\GL_2(\BQ_3)$-invariant pairing on $\pi_3\times \pi_3$ is given by
\[(\phi_1,\phi_2)=\int_{\BQ_3^\times}\phi_1(x)\ov{\phi_2(x)}d^\times x.\]
Put
$$1_{\nu,n}(x)=\begin{cases}\nu(u),& \textrm{if $ x=u3^{n}$ for $ u\in\BZ_3^{\times}$},\\
                             0,& \mathrm{otherwise},\end{cases}$$
where $\nu$ is a character of $\BZ_3^{\times}$.
Then $\{1_{\nu,n}(x)\}_{\nu,n}$ is an orthogonal basis of $\CS(\BQ_3^\times)$ with respect to the paring $(\cdot,\cdot)$. For $\phi(x)\in\CS(\BQ_3^\times)$, we have the Fourier expansion
$$\phi(x)=\sum_{\nu}\sum_n\widehat{\phi}_n(\nu^{-1})1_{\nu,n},$$
where
$$\widehat{\phi}_n(\nu^{-1})=\int_{\BZ_3^\times}\phi(3^nx)\nu^{-1}(x)d^\times x.$$
The action of $w=\matrixx{0}{1}{-1}{0}$ on $1_{\nu,n}$ can be described as follows:
$$\pi_3\left(\matrixx{0}{1}{-1}{0}\right)1_{\nu,n}=C_\nu 1_{\nu^{-1},-n+n_\nu}.$$
where
$$C_{\nu}=\epsilon(1/2,\pi\otimes \nu^{-1}, \psi),$$
and $n_{\nu}=-\max\{5, 2i\}$, where $i$ is the conductor of $\nu$. From $w^2=-1$, one can see that
$$n_{\nu}=n_{\nu^{-1}},\ \ \ C_{\nu}C_{\nu^{-1}}=1.$$
It is well known that $1_{1,0}$ is the normalized local  new form, and hence, is  parallel to $f_3$.

Note that $\Phi(t)$ is bi-invariant under $U_0(3^3)_3$ and hence induces a function on the double cosets
$ZU_0(3^3)_3\backslash \GL_2(\BQ_3)/U_0(3^3)_3$, where $Z$ denotes the center of $\GL_2(\BQ_3)$. Consider
the natural projection map
$$\pr:\GL_2(\BQ_3)\rightarrow ZU_0(3^3)_3\backslash \GL_2(\BQ_3)/U_0(3^3)_3.$$

Under the embedding $\rho: K\hookrightarrow \M_2(\BQ)$, we write
\begin{equation}\label{matrix}
\sqrt{-3}=\matrixx{-1}{-2p/9}{18/p}{1}.
\end{equation}
We have
the following decomposition of matrix:
\begin{equation}\label{decomposition1}
\matrixx{a}{3^ib}{3^jc}{d}=\matrixx{(ac^{-1}-bd^{-1}3^{j+i})}{d^{-1}b3^i}{0}{1}\matrixx{1}{0}{3^j}{1}\matrixx{c}{0}{0}{d}.
\end{equation}

Let $i<3$ be an integer. From
$$\matrixx{1}{0}{3^i}{1}=-w\matrixx{1}{-3^i}{0}{1}w,$$
we have
$$\pi_3\left(\matrixx{1}{-3^i}{0}{1}w\right)1_{1,0}(x)=C_1\psi^{-}(3^ix)1_{1,-3}(x)=C_1\sum_{c(\nu)=3-i}\widehat{\psi^-}_{i-3}(\nu^{-1})1_{\nu,-3}(x),$$
\[\pi_3\left(\matrixx{1}{0}{3^i}{1}\right)1_{1,0}=\sum_{c(\nu)=3-i}C_1C_{\nu}\widehat{\psi^-}_{i-3}(\nu^{-1})1_{\nu^{-1},\min(0,2i-3)},\]
that is

\begin{equation}\label{rep6}
\pi_3\left(\matrixx{1}{0}{3^i}{1}\right)1_{1,0}=\begin{cases}
\sum_{c(\nu)=3-i}C_1C_{\nu}\widehat{\psi^-}_{i-3}(\nu^{-1})1_{\nu^{-1},2i-3},\quad & \textrm{if $ i\leq
1$};\\
\wh{\psi^-}_{-1}(1)1_{1,0}+C_1C_{\nu}\widehat{\psi^-}_{-1}(\nu^{-1})1_{\nu^{-1},0},\quad & \textrm{if $i=2$}.
\end{cases}
\end{equation}
Under the embedding $\rho$, we have
$$3y+\sqrt{-3}=\matrixx{3y-1}{3^{-2}(-2p)}{3^2(2/p)}{3y+1},\quad y\in \BZ/9\BZ$$
We have the decomposition
\begin{equation*}
\matrixx{3y-1}{3^{-2}(-2p)}{3^2(2/p)}{3y+1}=\matrixx{\frac{3(3y^2+1)p}{2(3y+1)}}{\frac{-2p}{3^2(3y+1)}}{0}{1}
\matrixx{1}{0}{3^2}{1}\matrixx{2/p}{0}{0}{3y+1}.
\end{equation*}
Then
\begin{eqnarray*}
\pr\left(\matrixx{3y-1}{3^{-2}(-2p)}{3^2(2/p)}{3y+1}\right)&=&
\matrixx{\frac{3(3y^2+1)p}{2(3y+1)}}{\frac{-2p}{3^2(3y+1)}}{0}{1}\matrixx{1}{0}{3^{2}}{1}\\
&=&\matrixx{3A_y}{\frac{B_y}{3^{2}}}{0}{1}\matrixx{1}{0}{3^{2}}{1}
\end{eqnarray*}
where $A_y=\frac{(3y^2+1)p}{2(3y+1)}$ and
$B_y=\frac{-2p}{3y+1}$ are 3-units.
Then by (\ref{rep6})
\begin{eqnarray*}\label{rep1}
\pi_3(3y+\sqrt{-3})1_{1,0}(x)&=&\psi\left(\frac{B_yx}{3^{2}}\right)
\widehat{\psi^-}_{-1}(\nu_1)C_{\nu_1}1_{\nu_1,0}\left(3A_yx\right)+\psi\left(\frac{B_yx}{3^{2}}\right)
\widehat{\psi^-}_{-1}(1)1_{1,0}\left(3A_yx\right)\\
&=&\psi\left(\frac{B_yx}{3^{2}}\right)\left(\frac{\sqrt{-3}}{2}
\nu_1(A_y)C_{\nu_1}1_{\nu_1,-1}\left(x\right)+\frac{-1}{2}
1_{1,-1}\left(x\right)\right),
\nonumber
\end{eqnarray*}
and hence we see                                                                                                                                                       \begin{equation}\label{eqn:mc1}
\Phi(3y+\sqrt{-3})=0,\quad y\in \BZ/9\BZ.
\end{equation}
For $y\in \BZ/9\BZ$, we have
\begin{eqnarray}\label{eqn:mc2}
\Phi(1+y\sqrt{-3})&=&(\pi_3(1+y\sqrt{-3})1_{1,0},1_{1,0})\\
&=&(\pi_3(-3y+\sqrt{-3})1_{1.0},\pi_3(\sqrt{-3})1_{1,0})))\nonumber\\
&=&\int_{v(x)=-1}\psi\left(\frac{yx}{-3(3y-1)}\right)d^\times x\nonumber\\
&=&\begin{cases}1,&\quad y=0;\\ -\frac{1}{2}, &\quad y=3,\ 6;\\ 0, &\quad\text{otherwise}.\end{cases}\nonumber
\end{eqnarray}


\begin{prop}\label{local}
The local integral
\[\beta^0_3(f_3,f_3)=\frac{\Vol(K_3^\times/\BQ_3^\times)}{12}.\]
\end{prop}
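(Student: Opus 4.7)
The plan is to plug the matrix coefficient computations (\ref{eqn:mc1}) and (\ref{eqn:mc2}) directly into formula (\ref{beta}), using the fact that $S \sqcup S'$ is a complete system of representatives for $K_3^\times/\BQ_3^\times \CO_{9p,3}^\times$. The computation breaks into three stages: eliminate the zero terms, compute $\chi_3$ on the surviving three representatives, and sum.

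First I would observe that (\ref{eqn:mc1}) shows $\Phi(t) = 0$ for every $t \in S'$, so the sum reduces to $\sum_{y \in \BZ/9\BZ} \Phi(1+y\sqrt{-3})\chi_3(1+y\sqrt{-3})$. Next (\ref{eqn:mc2}) cuts this down further: the only nonzero contributions come from $y = 0, 3, 6$, with values $\Phi(1) = 1$ and $\Phi(1+3\sqrt{-3}) = \Phi(1+6\sqrt{-3}) = -\tfrac{1}{2}$.

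The key step is then evaluating $\chi_3$ at these three representatives. Using $\sqrt{-3} = 2\omega+1$, I rewrite $1+3\sqrt{-3} = 4 + 6\omega$ and $1+6\sqrt{-3} = 7 + 12\omega$. Modulo $\BZ_3^\times(1+9\CO_{K,3})$ I can discard the scalar $\BZ_3^\times$ factor, and using $(1+3\omega_3)^2 \equiv 1 + 6\omega_3 \pmod{9\CO_{K,3}}$, the two elements reduce (up to scalars) to $(1+3\omega_3)^2$ and $1+3\omega_3$ in the quotient group $\CO_{K,3}^\times/\BZ_3^\times(1+9\CO_{K,3})$. By Proposition \ref{LCF}, $\sigma_{1+3\omega_3}$ fixes $\sqrt[3]{p}$ while $(\sqrt[3]{3})^{\sigma_{1+3\omega_3}-1} = \omega^2$, so both $\chi_{3p}$ and $\chi_{3p^2}$ send $1+3\omega_3$ to $\omega^2$. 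Consequently $\chi_3(1+3\sqrt{-3}) = \omega^4 = \omega$ and $\chi_3(1+6\sqrt{-3}) = \omega^2$, and $\chi_3(1) = 1$.

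Combining, the sum becomes
\[
\sum_{t \in S \sqcup S'} \Phi(t)\chi_3(t) = 1 - \tfrac{1}{2}\omega - \tfrac{1}{2}\omega^2 = 1 + \tfrac{1}{2} = \tfrac{3}{2},
\]
using $1+\omega+\omega^2 = 0$. Plugging into (\ref{beta}) yields $\beta^0_3(f_3,f_3) = \frac{\Vol(K_3^\times/\BQ_3^\times)}{18} \cdot \frac{3}{2} = \frac{\Vol(K_3^\times/\BQ_3^\times)}{12}$. The only nontrivial bookkeeping is the local reciprocity computation identifying the concrete elements $4+6\omega$, $7+12\omega$ with powers of $1+3\omega_3$ in the appropriate quotient, which is where the hypothesis $p \equiv 2, 5 \pmod 9$ (via Proposition \ref{LCF}) enters; once that is in place the rest is routine arithmetic in $\CO_K/9$.
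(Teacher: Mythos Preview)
Your proof is correct and follows essentially the same approach as the paper: plug the matrix coefficient values (\ref{eqn:mc1}) and (\ref{eqn:mc2}) into (\ref{beta}) and evaluate. The paper simply records the resulting sum as $1-\tfrac{1}{2}(\omega+\omega^2)$ without spelling out which of $\chi_3(1+3\sqrt{-3})$, $\chi_3(1+6\sqrt{-3})$ equals $\omega$ and which equals $\omega^2$; your explicit identification of $4+6\omega$ and $7+12\omega$ with $(1+3\omega_3)^2$ and $1+3\omega_3$ in $\CO_{K,3}^\times/\BZ_3^\times(1+9\CO_{K,3})$, together with the use of Proposition~\ref{LCF}, fills in exactly that detail.
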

\begin{proof}
It follows from (\ref{beta}), (\ref{eqn:mc1}) and (\ref{eqn:mc2}) that
\begin{eqnarray*}
\beta^0_3(f_3,f_3)
&=&\frac{\Vol(K_3^\times/\BQ_3^\times)}
{ 18}\sum_{t\in S\bigsqcup S'}\Phi(t)\chi_{3}(t)\\
&=&\frac{\Vol(K_3^\times/\BQ_3^\times)}
{ 18}\left(1-\frac{1}{2}(\omega+\omega^2)\right)\\
&=&\frac{\Vol(K_3^\times/\BQ_3^\times)}{12}.
\end{eqnarray*}

\end{proof}

\subsection{Variant of the Birch and Swinnerton-Dyer conjecture}
By the explicit descent method of $3$-isogenies, we can compute the following Selmer groups.
\begin{prop}\label{selmer}
If $p\equiv 2,5\mod 9$ is an odd prime, then \[\dim_{\BF_3} \Sel_3(E_{3p})=1\textrm{ and }\dim_{\BF_3} \Sel_3(E_{9p^2})=0,\]
and 
\[\dim_{\BF_3} \Sel_3(E_{3p^2})=1\textrm{ and }\dim_{\BF_3} \Sel_3(E_{9p})=0.\]
\end{prop}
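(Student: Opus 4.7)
The strategy is an explicit $3$-descent via the $\BQ$-rational $3$-isogeny $\phi_n: E_n \to E_n^{(-3)}$, where $E_n^{(-3)}: y^2 = x^3 + 11664 n^2$ is the quadratic twist of $E_n$ by $-3$. The isogeny comes from the $G_\BQ$-stable subgroup $E_n[\sqrt{-3}] = \{O, (0, \pm 12n\sqrt{-3})\}$ (the two nontrivial points are swapped by complex conjugation), which is isomorphic to $\mu_3$ as a Galois module; the dual $\hat\phi_n$ thus has kernel $\cong \BZ/3\BZ$. Since for each of the four values $n \in \{3p, 9p^2, 3p^2, 9p\}$ neither $n$ nor $-27n$ is a cube or twice a cube, $E_n(\BQ)_\tor = E_n^{(-3)}(\BQ)_\tor = 0$, and so the standard exact sequence attached to $0 \to E_n[\phi_n] \to E_n[3] \to E_n^{(-3)}[\hat\phi_n] \to 0$ (see Silverman AEC X.4.7) reduces the computation of $\Sel_3(E_n/\BQ)$ to those of $\Sel_{\phi_n}(E_n/\BQ)$ and $\Sel_{\hat\phi_n}(E_n^{(-3)}/\BQ)$.

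Kummer theory provides embeddings
\begin{equation*}
\Sel_{\phi_n}(E_n/\BQ) \hookrightarrow H^1(\BQ, \mu_3) = \BQ^\times/(\BQ^\times)^3, \qquad \Sel_{\hat\phi_n}(E_n^{(-3)}/\BQ) \hookrightarrow H^1(\BQ, \BZ/3\BZ) = \Hom_\cont(G_\BQ, \BF_3).
\end{equation*}
Good reduction of $E_n$ and $E_n^{(-3)}$ away from $S := \{3, p, \infty\}$ confines both Selmer groups to classes unramified outside $S$, i.e., to the finite group $\BQ(S,3) = \langle -1, 3, p\rangle/(\BQ^\times)^3$ on the $\mu_3$-side and to the analogous finite space of cubic characters on the $\BZ/3\BZ$-side. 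The Selmer classes are then isolated by intersecting with the local images of the Kummer maps $\delta_v: E_n^{(-3)}(\BQ_v)/\phi_n E_n(\BQ_v) \hookrightarrow H^1(\BQ_v, \mu_3)$ at each $v \in S$, which can be read off the explicit Weierstrass equations. The hypothesis $p \equiv 2, 5 \pmod 9$ enters at $v = p$: it forces $p$ to be inert in $K$, determines the class of $p$ in $(\BZ/9\BZ)^\times$, and via the cubic Hilbert reciprocity identities proved in Proposition \ref{LCF} pins down the local image of $\delta_p$.

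Tracking the local conditions is expected to yield $\dim \Sel_{\phi}(E_{3p}) = 1$, $\dim \Sel_{\hat\phi}(E_{3p}^{(-3)}) = 0$, and $\dim \Sel_\phi(E_{9p^2}) = \dim \Sel_{\hat\phi}(E_{9p^2}^{(-3)}) = 0$, giving the stated $\dim \Sel_3(E_{3p}) = 1$ and $\dim \Sel_3(E_{9p^2}) = 0$; the pair $(E_{3p^2}, E_{9p})$ is handled by the same machinery, with the roles of $p \equiv 2$ versus $p \equiv 5 \pmod 9$ swapped at the level of cubic residues modulo $p$. The principal technical obstacle is the local analysis at $v = 3$: $E_n$ has additive (potentially good supersingular over $K_3$) reduction there, so the image of $\delta_3$ is not prescribed by a universal formula and must be computed directly from the explicit Weierstrass model via the formal group and the N\'eron model's component groups. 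Since $\mu_3$ and $\BZ/3\BZ$ pair differently under local Tate duality at $3$, the $\phi$- and $\hat\phi$-contributions must be tracked with care to ensure the global dimension count is consistent.
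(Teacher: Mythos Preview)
Your approach---descent via the rational $3$-isogeny $\phi_n: E_n \to E_n^{(-3)}$ with kernel $E_n[\sqrt{-3}]\cong\mu_3$---is exactly the paper's, though the paper outsources the hard work: rather than carrying out the local analysis at $3$ (which you correctly flag as the main obstacle), it cites Satg\'e \cite[Theorem~2.9]{Satge} for the $\phi$- and $\hat\phi$-Selmer groups and then invokes \cite[Lemma~5.1]{SY17} to assemble them into $\Sel_3$.

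There is, however, a concrete slip in your bookkeeping. You assert $E_n^{(-3)}(\BQ)_\tor=0$, but this contradicts your own identification $\ker\hat\phi_n\cong\BZ/3\BZ$: trivial Galois action means the nonzero points $(0,\pm 108n)$ of $\ker\hat\phi_n$ are $\BQ$-rational, so $E_n^{(-3)}(\BQ)_\tor\supseteq\BZ/3\BZ$ (in fact equality holds, as the paper notes). This rational torsion contributes a line to $\Sel_{\phi_n}(E_n/\BQ)$ via the connecting map $E_n^{(-3)}(\BQ)/\phi_n E_n(\BQ)\hookrightarrow H^1(\BQ,\mu_3)$, and Satg\'e's result in fact gives $\dim_{\BF_3}\Sel_{\phi}(E_{3p})=2$ (not $1$) together with $\Sel_{\hat\phi}(E_{3p}^{(-3)})=0$. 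The exact sequence you cite then has a nontrivial first term $E_{3p}^{(-3)}(\BQ)[\hat\phi]/\phi(E_{3p}(\BQ)[3])\cong\BZ/3\BZ$, yielding $\dim_{\BF_3}\Sel_3(E_{3p})=2-1+0=1$. Your two errors cancel numerically, but the argument as written would not survive the actual local computation; once the torsion is corrected, your outline becomes a do-it-yourself version of what the paper obtains by citation.
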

\begin{proof}
The elliptic curve $E_{3p}$ has Weierstrass equation $y^2=x^3-2^43^3(3p)^2$ and let $E_{3p}'$ be the elliptic curve given by Weierstrass equation $y^2=x^3+2^4(3p)^2$. There is a unique isogeny $\phi:E_{3p}\ra E_{3p}'$ of degree $3$ up to automorphisms $\pm 1$. Let $\phi':E_{3p}'\ra E_{3p}$ be the dual isogeny of $\phi$. By \cite[Theorem 2.9]{Satge},
\[\Sel_\phi(E_{3p})\simeq (\BZ/3\BZ)^2\textrm{ and }\Sel_{\phi'}(E_{3p}')=0.\]
Note $E_{3p}(\BQ)_\tor=0$, $E'_{3p}(\BQ)_\tor\simeq \BZ/3\BZ$ and $\Sha(E'_{3p})[\phi']=0$. By \cite[Lemma 5.1]{SY17}, we have
\[\dim_{\BF_3}\Sel_3(E_{3p})=1.\]
The Selmer groups $\Sel_3(E_{3p^2})$, $\Sel_3(E_{9p})$ and $\Sel_3(E_{9p^2})$ can be computed the same way.
\end{proof}
By  the formula for epsilon factors in \cite{Liverance}, we know
\[\epsilon(E_{3p})=\epsilon(E_{3p^2})=-1\textrm{ and }\epsilon(E_{9p})=\epsilon(E_{9p^2})=+1.\]
Then the Birch and Swinnerton-Dyer (B-SD) conjecture would imply that 
\[\rk E_{3p}(\BQ)=\rk E_{3p^2}(\BQ)=1 \textrm{ and }\rk E_{9p}(\BQ)=\rk E_{9p^2}(\BQ)=0.\]

Suppose the Heegner point $R_1$ is not torsion. It follows from the work of Kolyvagin \cite{Kolyvagin1990} and Gross-Zagier \cite{GZ1986} that
\[\rk_\BZ E_{3p}(\BQ)=\ord_{s=1}L(s,E_{3p})=1,\quad \rk_\BZ E_{9p^2}(\BQ)=\ord_{s=1}L(s,E_{9p^2})=0,\]
and the Shafarevich-Tate groups $\Sha(E_{3p})$ and $\Sha(E_{9p^2})$ are finite.

Let $P_1$  be a generator of the free part of $E_{3p}(\BQ)$. By \cite[Table 1]{ZK}, we know that the Tamagawa numbers $c_\ell(E_{3p})=c_{\ell}(E_{9p^2})=1$ for all primes $\ell$. Then the B-SD conjecture predicts that
\begin{equation*}
\frac{L'(1,E_{3p})}{\Omega_{3p}}=|\Sha(E_{3p})|\cdot\widehat{h}_\BQ(P_1),
\end{equation*}
 and
\begin{equation*}
\frac{L(1,E_{9p^2})}{\Omega_{9p^2}}=\left|\Sha(E_{9p^2})\right|.
\end{equation*}
Combining these two, we get
\begin{equation}\label{bsd}
\frac{L'(1,E_{3p})}{\Omega_{3p}}\cdot\frac{L(1,E_{9p^2})}{\Omega_{9p^2}}=|\Sha(E_{3p})|\cdot|\Sha(E_{9p^2})|\cdot\widehat{h}_\BQ(P_1).
\end{equation}
Comparing this with the explicit height formula Theorem \ref{thm:GZ}, we have
\begin{equation}\label{bsd1}
|\Sha(E_{3p})|\cdot|\Sha(E_{9p^2})|=\frac{\wh{h}_\BQ(R_1)}{\widehat{h}_\BQ(P_1)}.
\end{equation}

Suppose both the Heegner point $R_2$ is not torsion.  Let $P_2$ be a generator of the free part of $E_{3p^2}(\BQ)$.  The B-SD conjecture implies via a similar argument  that
\begin{equation}\label{bsd2}
|\Sha(E_{3p^2})|\cdot|\Sha(E_{9p})|=\frac{\wh{h}_\BQ(R_2)}{\widehat{h}_\BQ(P_2)}.
\end{equation}
It is easy to see that the rations $\frac{\wh{h}_\BQ(R_1)}{\widehat{h}_\BQ(P_1)}$ and $\frac{\wh{h}_\BQ(R_2)}{\widehat{h}_\BQ(P_2)}$ are rational numbers.  By the work of Perrin \cite{PR1987} and Kobayashi \cite{Koba2013}, for good primes $\ell$, i.e. $\ell\nmid 6p$, the $\ell$-part of the identities (\ref{bsd1}) and (\ref{bsd2})  hold. 

Note by Proposition \ref{selmer}, we have
$$|\Sha(E_{3p})[3^\infty]|=|\Sha(E_{9p^2})[3^\infty]|=1.$$ 
Then we have the following variant of the $3$-part of the B-SD conjectures for the related elliptic curves.
\begin{prop}
Suppose the Heegner point $R_1$ resp. $R_2$ is not torsion.   Let $P_1$ resp. $P_2$ be a generator of the free part of $E_{3p}(\BQ)$ resp.  $E_{3p^2}(\BQ)$. Then the $3$-part of $\Sha(E_{3p})|\cdot|\Sha(E_{9p^2})| $resp. $|\Sha(E_{3p^2})|\cdot|\Sha(E_{9p})|$ is as predicted by the B-SD conjecture if and only if 
\[\frac{\wh{h}_\BQ(R_1)}{\widehat{h}_\BQ(P_1)}\textrm{ resp. } \frac{\wh{h}_\BQ(R_2)}{\widehat{h}_\BQ(P_2)}\]
is a  $3$-adic unit.
\end{prop}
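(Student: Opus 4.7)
The plan is to treat this proposition as a direct book-keeping of what has already been assembled: the explicit Gross--Zagier formula of Theorem \ref{thm:GZ}, the BSD formula combined with the Kolyvagin--Gross--Zagier theorem, and the $3$-descent computation of Proposition \ref{selmer}. I will focus on the pair $(E_{3p},E_{9p^2})$; the case $(E_{3p^2},E_{9p})$ is word-for-word identical using $R_2$ and $P_2$.

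First, assuming $R_1$ is non-torsion, the combination of Theorem \ref{thm:GZ} with the theorems of Gross--Zagier and Kolyvagin gives $\rk_\BZ E_{3p}(\BQ)=1$, $\rk_\BZ E_{9p^2}(\BQ)=0$, and finiteness of both $\Sha(E_{3p})$ and $\Sha(E_{9p^2})$. Next, I would invoke the cited facts $c_\ell(E_{3p})=c_\ell(E_{9p^2})=1$ for all $\ell$, and the vanishing of the torsion $E_{3p}(\BQ)_\tor=E_{9p^2}(\BQ)_\tor=0$, to write the full BSD prediction for the product of the leading L-values in the form (\ref{bsd}). Dividing the explicit height formula of Theorem \ref{thm:GZ} by (\ref{bsd}) yields the clean identity (\ref{bsd1}):
\[
|\Sha(E_{3p})|\cdot|\Sha(E_{9p^2})|=\frac{\widehat{h}_\BQ(R_1)}{\widehat{h}_\BQ(P_1)}.
\]
Both sides are positive rationals, so the statement ``the $3$-part of BSD for the product holds'' is simply the equality of the $3$-adic valuations of the two sides.

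The second step is to evaluate each side $3$-adically. On the left, Proposition \ref{selmer} gives $\Sel_3(E_{3p})\simeq\BZ/3\BZ$ and $\Sel_3(E_{9p^2})=0$. From the standard descent exact sequence
\[
0\lra E_{3p}(\BQ)/3E_{3p}(\BQ)\lra \Sel_3(E_{3p})\lra \Sha(E_{3p})[3]\lra 0,
\]
and the fact that $E_{3p}(\BQ)/3E_{3p}(\BQ)\simeq \BZ/3\BZ$ (rank one, trivial torsion), one reads off $\Sha(E_{3p})[3]=0$; the analogous sequence for $E_{9p^2}$ forces $\Sha(E_{9p^2})[3]=0$. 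Since finite abelian $3$-groups with trivial $3$-torsion are trivial, this gives $|\Sha(E_{3p})[3^\infty]|=|\Sha(E_{9p^2})[3^\infty]|=1$, i.e.\ $v_3$ of the left side of (\ref{bsd1}) is zero.

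The conclusion is then immediate: the $3$-part of $|\Sha(E_{3p})|\cdot|\Sha(E_{9p^2})|$ is as predicted by BSD if and only if $v_3$ of the right side of (\ref{bsd1}) also vanishes, which is precisely the assertion that $\widehat{h}_\BQ(R_1)/\widehat{h}_\BQ(P_1)$ is a $3$-adic unit. There is no real obstacle here—the proposition is an assembly of results already in place; the only subtle point worth stressing in the write-up is that the intermediate statement (\ref{bsd1}) is an \emph{identity of rational numbers} forced by Theorem \ref{thm:GZ} and the $\ell$-parts of BSD already proven by Perrin-Riou and Kobayashi for $\ell\nmid 6p$, so that comparing $3$-adic valuations is legitimate without assuming full BSD.
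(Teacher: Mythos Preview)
Your main argument is correct and is exactly the paper's approach: the proposition is simply the combination of the discussion already written out before it, namely that B-SD together with Theorem~\ref{thm:GZ} predicts (\ref{bsd1}), while Proposition~\ref{selmer} forces $|\Sha(E_{3p})[3^\infty]|=|\Sha(E_{9p^2})[3^\infty]|=1$, so the $3$-part of the prediction holds iff the right side of (\ref{bsd1}) is a $3$-adic unit.

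One point in your last paragraph is off and should be dropped. The equation (\ref{bsd1}) is \emph{not} an unconditional identity of rational numbers; it is precisely the B-SD prediction whose $3$-part is in question. The Perrin-Riou/Kobayashi input only gives the $\ell$-part for $\ell\nmid 6p$ and says nothing about $\ell=2,3,p$, so it cannot ``force'' (\ref{bsd1}) as an equality, nor is it needed here. What \emph{is} needed, and is unconditional, is that the ratio $\widehat h_\BQ(R_1)/\widehat h_\BQ(P_1)$ is a rational number: this is because the image of $R_1$ in $E_{3p}(K)_\BQ$ under the twist isomorphism lies in a rank-one $\CO_K$-module generated by $P_1$, so the heights differ by the square-norm of an element of $K$. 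That rationality is what makes ``compare $3$-adic valuations'' meaningful, and it requires no B-SD input at all.
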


\bibliographystyle{alpha}
\bibliography{reference}

\begin{thebibliography}{CST17}

\bibitem[AL70]{AL1970}
A.O.L. Atkin and J.~Lehner.
\newblock Hecke operators on ${\Gamma}_0({N})$.
\newblock {\em Mathematische Annalen}, 185:134--160, 1970.

\bibitem[Cas73]{Casselman1973}
William Casselman.
\newblock On some results of {A}tkin and {L}ehner.
\newblock {\em Mathematische Annalen}, 201:301--314, 1973.

\bibitem[Cox89]{Cox89}
D.~A. Cox.
\newblock {\em Primes of the Form $x^2+n y^2$}.
\newblock John Wiley $\&$ Sons Inc., 1989.

\bibitem[CST14]{CST14}
Li~Cai, Jie Shu, and Ye~Tian.
\newblock Explicit {G}ross-{Z}agier and {W}aldspurger formulae.
\newblock {\em Algebra $\&$ Number Theory}, 8(10):2523--2572, 2014.

\bibitem[CST17]{CST17}
L.~Cai, J.~Shu, and Y.~Tian.
\newblock Cube sum problem and an explicit {G}ross-{Z}agier formula.
\newblock {\em Amer. Jour. of Math.}, 139(3):785--816, 2017.

\bibitem[DR73]{DR73}
P.~Deligne and M.~Rapoport.
\newblock {\em Les Sch\'emas de Modules de Courbes Elliptiques}, pages
  143--316.
\newblock Springer Berlin Heidelberg, Berlin, Heidelberg, 1973.

\bibitem[DV09]{DV1}
Samit Dasgupta and John Voight.
\newblock Heegner points and sylvester's conjecture.
\newblock {\em Arithmetic Geometry: Clay Mathematics Institute Summer School,
  Arithmetic Geometry, July 17-August 11, 2006, Mathematisches Institut,
  Georg-August-Universit{\"a}t, G{\"o}ttingen, Germany}, 8:91, 2009.

\bibitem[DV17]{DV17}
S.~Dasgupta and J.~Voight.
\newblock Sylvester's problem and mock {H}eegner points.
\newblock {\em arXiv:1707.05874}, 2017.

\bibitem[Gro88]{Gross88}
Benedict~H. Gross.
\newblock Local orders, root numbers, and modular curves.
\newblock {\em American Journal of Mathematics}, 110(6):1153--1182, 1988.

\bibitem[GZ86]{GZ1986}
B.H. Gross and D.B. Zagier.
\newblock Heegner points and derivatives of {L}-series.
\newblock {\em Inventiones mathematicae}, 84:225--320, 1986.

\bibitem[Kat76]{Katz76}
N.~M. Katz.
\newblock p-adic ineterpolation of real analytic {Eisenstein} series.
\newblock {\em Ann. of Math.}, 104(3):459--571, 1976.

\bibitem[Kob13]{Koba2013}
S.~Kobayashi.
\newblock The $p$-adic {G}ross-{Z}agier formula for elliptic curves at
  supersingular primes.
\newblock {\em Invent. Math.}, 191:527--629, 2013.

\bibitem[Kol90]{Kolyvagin1990}
V.~A. Kolyvagin.
\newblock Euler systems.
\newblock In {\em The Grothendieck Festschrift II}, volume~87 of {\em Progr.
  Math.}, pages 435--483. Birkhauser Boston, Boston, MA, 1990.

\bibitem[Lan87]{Lang-ef}
S.~Lang.
\newblock {\em Elliptic functions}, volume 112 of {\em Graduate Texts in
  Mathematics}.
\newblock Springer, 1987.

\bibitem[Lie94]{Lieman}
D.~Lieman.
\newblock Nonvanishing of {L-series} associated to cubic twists of elliptic
  curves.
\newblock {\em Ann. of Math.}, 140:81--108, 1994.

\bibitem[Lig70]{Ligozat1970}
G{\'e}rard Ligozat.
\newblock Fonction l des courbes modulaires.
\newblock {\em S{\'e}minaire Delange-Pisot-Poitou. Th{\'e}orie des nombres},
  11(1):1--10, 1969-1970.

\bibitem[Liv95]{Liverance}
E.~Liverance.
\newblock A formula for the root number of a family of elliptic curves.
\newblock {\em Journal of Number Theory}, 51(2):288 -- 305, 1995.

\bibitem[Ogg80]{Ogg80}
A.~P. Ogg.
\newblock Modular functions.
\newblock In {\em Santa Cruz Conference on Finite Groups}, volume~37 of {\em
  Proc. Sympos. Pure Math.}, pages 521--532. Amer. Math. Soc., Providence,
  1980.

\bibitem[PR87]{PR1987}
B.~Perrin-Riou.
\newblock Points de heegner et deriv{\'e}es de fonctions {L} p-adiques.
\newblock {\em Inventiones mathematicae}, 89:455--510, 1987.

\bibitem[Rib90]{Ribet90}
K.~A. Ribet.
\newblock On modular representations of {$\Gal(\ov{\BQ}/\BQ)$ }arising from
  modular forms.
\newblock {\em Invent. math.}, 100:431--476, 1990.

\bibitem[Sat86]{Satge}
P.~Satg{\'e}.
\newblock Groupes de {S}elmer et corps cubiques.
\newblock {\em J. Number Theory}, 23(3):294--317, 1986.

\bibitem[Shi94]{Shimurabook}
Goro Shimura.
\newblock {\em Introduction to the arithmetic theory of automorphic functions},
  volume~11 of {\em Publications of the Mathematical Society of Japan}.
\newblock Princeton University Press, Princeton, NJ, 1994.
\newblock Reprint of the 1971 original, Kan{\^o} Memorial Lectures, 1.

\bibitem[Sil92]{Silvermanbook1}
Joseph~H. Silverman.
\newblock {\em The arithmetic of elliptic curves}, volume 106 of {\em Graduate
  Texts in Mathematics}.
\newblock Springer-Verlag, New York, 1992.
\newblock Corrected reprint of the 1986 original.

\bibitem[SY17]{SY17}
J.~Shu and H.~Yin.
\newblock An explicit {Gross-Zagier} formula related to the {Sylvester}
  conjecture.
\newblock {\em preprint}, 2017.

\bibitem[Syl79a]{Sylv79a}
J.~J. Sylvester.
\newblock On certain tenary cubic-form equations.
\newblock {\em Amer. J. Math.}, 2(3):280--285, 1879.

\bibitem[Syl79b]{Sylv79b}
J.~J. Sylvester.
\newblock On certain tenary cubic-form equations.
\newblock {\em Amer. J. Math.}, 2(4):357--393, 1879.

\bibitem[Syl80a]{Sylv80a}
J.~J. Sylvester.
\newblock On certain tenary cubic-form equations.
\newblock {\em Amer. J. Math.}, 3(1):58--88, 1880.

\bibitem[Syl80b]{Sylv80b}
J.~J. Sylvester.
\newblock On certain tenary cubic-form equations.
\newblock {\em Amer. J. Math.}, 3(2):179--189, 1880.

\bibitem[YZZ13]{YZZ}
Xinyi Yuan, Shou-Wu Zhang, and Wei Zhang.
\newblock {\em The {G}ross-{Z}agier formula on {S}himura curves}, volume 184 of
  {\em Annals of Mathematics Studies}.
\newblock Princeton University Press, Princeton, NJ, 2013.

\bibitem[ZK87]{ZK}
D.~Zagier and G.~Kramarz.
\newblock Numerical investigations related to the {$L$}-series of certain
  elliptic curves.
\newblock {\em J. Indian Math. Soc. (N.S.)}, 52:51--69 (1988), 1987.

\end{thebibliography}
\end{document}